\newtheorem{thrm}{Theorem}[section]
\newtheorem{lem}[thrm]{Lemma}
\newtheorem{prop}[thrm]{Proposition}
\newtheorem{cor}[thrm]{Corollary}
\theoremstyle{definition}
\newtheorem{definition}[thrm]{Definition}
\numberwithin{equation}{section}
\newtheorem{claim}[thrm]{Claim}
\newcommand{\union}{\mathop{\bigcup}\limits}
\newcommand{\zfc}{\mathnormal{\mathsf{ZFC}}}
\newcommand{\inter}{\mathop{\bigcap}\limits}
\newcommand{\ch}{\mathnormal{\mathsf{CH}}}
\newcommand{\covm}{\mathnormal{\mathrm{cov}(\mathcal M)}}
\newcommand{\fs}{\mathop{\mathrm{FS}}}
\newcommand{\fu}{\mathop{\mathrm{FU}}}
\DeclareMathOperator{\supp}{supp}
\author[D. Fern\'andez]{David~J. Fern\'andez Bret\'on}
\address{
Department of Mathematics and Statistics\\
York University\\
Toronto, Ontario, Canada}
\email{davidfb@mathstat.yorku.ca}
\thanks{Partially supported by Conacyt (Mexico) and NSERC (Canada)}
\keywords{ultrafilters, Stone-\v Cech
compactification, sparse ultrafilter, strongly summable ultrafilter, union ultrafilter, 
finite sums, additive 
isomorphism, trivial sums property, Boolean group, abelian group.}
\subjclass[2010]{Primary 03E75; Secondary 54D35, 54D80, 05D10, 05A18, 20K99.}
\begin{document}

\title[Strongly Summable Ultrafilters]{Strongly summable ultrafilters, union ultrafilters, and the 
trivial sums property}

\begin{abstract}
We answer two questions of Hindman, Stepr\=ans and Strauss,\linebreak namely we prove that every 
strongly summable 
ultrafilter on an abelian group is sparse and has the trivial sums property. Moreover we 
show that in most 
cases the sparseness of the given ultrafilter is a 
consequence of its being isomorphic to a union ultrafilter. However, this does not happen 
in all cases: 
we also construct (assuming Martin's Axiom for countable partial orders, i.e. 
$\covm=\mathfrak c$), on the 
Boolean group, a strongly summable ultrafilter that 
is not additively isomorphic to any union ultrafilter. 
\end{abstract}
\maketitle

\section{Introduction}

The concept of a Strongly Summable Ultrafilter was born with Neil Hindman's 
efforts for proving the theorem that now bears his name (which at the time was 
known as Graham-Rothschild's conjecture), though later on it was realized that such ultrafilters 
have a rich algebraic structure in terms of the algebra in the \v Cech-Stone compactification, which in 
turn sheds light on the aforementioned theorem by providing an elegant proof of it. We 
conceive the \v Cech-Stone compactification of an abelian group $G$ (equipped with the discrete 
topology) as the set $\beta G$ of all ultrafilters on $G$, where the 
basic open sets are those of the form $\bar{A}=\{p\in\beta G\big|A\in p\}$, for $A\subseteq G$. As it 
turns out, these sets are actually clopen. If we identify each point $x\in G$ with the principal 
ultrafilter $\{A\subseteq G\big|x\in A\}$, then $G$ is a dense subset of $\beta G$, and what we 
denoted 
by $\bar{A}$ is really the closure in $\beta G$ of the set $A$. The group 
operation $+$ from $G$ is also 
extended by means of the formula
\begin{equation*}
 p+q=\{A\subseteq G\big|\{x\in G\big|A-x\in q\}\in p\}
\end{equation*}
which turns $\beta G$ into a right topological semigroup. This means that for each $p\in\beta G$, 
the mapping 
$(q\longmapsto q+p):\beta G\longrightarrow\beta G$ is continuous, although $\beta G$ is not a 
group (nonprincipal ultrafilters have no inverse). Moreover, the extended operation $+$ is not 
commutative in $\beta G$, even though its restriction to $G$ is; but elements $x\in G$ satisfy 
that 
$x+p=p+x$ for every $p\in\beta G$. The closed subsemigroup 
$G^*=\beta G\setminus G$ consisting of all nonprincipal ultrafilters will be of special 
importance. The book \cite{hindmanstrauss} is 
the standard reference on this topic.

We reserve the lowercase roman letters $p,q,r,u,v$ for ultrafilters, and the uppercase roman letters 
$A,B,C,D,W,X,Y,Z$, with or without subscripts, will always denote subsets of the abelian group at hand. 
Lowercase letters $w,x,y,z$ will typically denote elements of the abelian group that is being dealt 
with, and the ``vector'' notation will be used for sequences of elements of the group, e.g. 
$\vec{x}=\langle x_n\big|n<\omega\rangle$. When the sequences are finite, we use the symbol $\frown$ to 
denote their concatenation, as in $\vec{x}\frown\vec{y}$. If $G$ is an abelian group and $x\in G$, the 
symbol $o(x)$ will 
denote the \textit{order} of $x$, i.e. the least natural number $n$ such that $nx=0$.
We make liberal use of the von Neumann ordinals, usually denoted by Greek letters 
$\alpha,\beta,\gamma,\zeta,\eta,\xi$; thus for two ordinals $\alpha,\beta$, the expressions 
$\alpha<\beta$ and $\alpha\in\beta$ are interchangeable. In particular, a natural number $n$ is 
conceived as the set $\{0,\ldots,n-1\}$ 
of its predecessors, 
with $0$ being equal to the empty set $\varnothing$; and $\omega$ denotes the set of finite ordinals, 
i.e. the set $\mathbb N\cup\{0\}$. The lowercase roman letters $i,j,k,l,m,n$, with or without subscript, 
will 
be reserved to denote elements of $\omega$. The letters $M$ and $N$, with or without subscripts,
will in 
general be reserved for denoting subsets of $\omega$ (finite or 
infinite). Given a subset $M\subseteq\omega$, $[M]^n$ will denote the set of subsets 
of $M$ with $n$ elements, $[M]^{<\omega}=\union_{n<\omega}[M]^n$ will denote the set of 
finite subsets of $M$, and $[M]^{\omega}$ denotes the set of infinite subsets of $M$. The 
lowercase roman 
letters $a,b,c,d$, with or without subscript, will stand for 
elements of $[\omega]^{<\omega}$, i.e. for finite subsets 
of $\omega$.

Whenever we have a mapping $f:G\longrightarrow H$, there is a standard way to lift or extend it 
to another mapping $\beta f:\beta G\longrightarrow\beta H$ which is continuous and, if 
$f$ is a semigroup homomorphism, then so is $\beta f$. This extension is given by 
\begin{equation*}
 (\beta f)(p)=\{A\subseteq H\big|f^{-1}[A]\in p\}=\langle\{f[A]\big|A\in p\}\rangle,
\end{equation*}
where the rightmost expression means that we take the filter on $H$ generated by the 
family $\{f[A]\big|A\in p\}$, which has the finite intersection property. It 
is customary to write just $f(p)$ instead of $(\beta f)(p)$, and we will do so throughout 
this paper. The ultrafilter $f(p)$ is called the \textit{Rudin-Keisler image of $p$ under $f$}.

The cardinal invariant $\covm$ (read ``covering of meagre'') is the least cardinal for which 
Martin's 
Axiom fails at a countable partial order. This is, $\covm$ is the least $\kappa$ such that one can 
find $\kappa$-many dense subsets of some countable partial order with no filter meeting them all 
(this notation is explained by the fact that this cardinal is also the least possible number of 
meagre sets 
needed to cover all of the real line). Thus the equality $\covm=\mathfrak c$ means that Martin's 
Axiom holds for countable partial orders, whilst the failure of this principle is expressed by the 
inequality $\covm<\mathfrak c$.

One of the most 
important groups dealt with in this paper is the circle group $\mathbb T=\mathbb R/\mathbb Z$. When 
talking about this group, we will freely identify real numbers with their corresponding cosets 
modulo $\mathbb Z$, and conversely we will identify elements of $\mathbb T$ (cosets modulo 
$\mathbb Z$) with any of the elements of $\mathbb R$ representing them.  Therefore, when we 
refer to an element of $\mathbb T$ as a real number $t$, we really 
mean the coset of that number modulo $\mathbb Z$, thus e.g. we may write $t=0$ 
and really mean that $t\in\mathbb Z$. This should not cause confusion as the context 
will always clearly indicate whether we are viewing a real number $t$ as a real number or as 
an element of $\mathbb T$. If there is the need to specify a single representative for an element 
of $\mathbb T$, we will pick the unique representative $t$ 
satisfying $-\frac{1}{2}<t\leq\frac{1}{2}$.

We will now proceed to introduce the main objects of study of this paper.
\pagebreak
\begin{definition}
Let $G$ be an abelian group.
\begin{enumerate}[(i)]
 \item Given a $k$-sequence $\vec{x}=\langle x_i\big|i<k\rangle$ of elements of $G$ (where 
 $k\leq\omega$), we define the \textbf{set of finite sums of the 
 sequence $\vec{x}$} as:
 \begin{equation*}
  \fs(\vec{x})=\left\{\sum_{n\in a}x_n\bigg|a\in[k]^{<\omega}\setminus\{\varnothing\}\right\}.
 \end{equation*}
 \item An \textbf{FS-set} is just a set of the form $\fs(\vec{x})$ for some sequence $\vec{x}$ of 
 elements of $G$ with infinite range.
 \item An ultrafilter $p\in\beta G$ is \textbf{strongly summable} if it has 
a base of $\fs$-sets, i.e. if for every $A\in p$ 
there exists an $\omega$-sequence with infinite range, $\vec{x}=\langle x_n\big|n<\omega\rangle$, 
such that $p\ni\fs(\vec{x})\subseteq A$.
\end{enumerate}
\end{definition}

Note that the only principal strongly summable ultrafilter is $0$. 
Strongly summable ultrafilters on $(\mathbb N,+)$ were first constructed, under $\ch$, by 
Neil Hindman in \cite[Th. 3.3]{hindman} (here he claims to construct an idempotent, but a closer 
look at the proof reveals that the ultrafilter under construction is in fact strongly summable), 
although at that time 
this terminology was still not in use. The terminology was only introduced later on, in 
\cite[Def. 2.1]{hindman-obscure}. Blass and Hindman showed in \cite[Th. 3]{blasshindman} that the existence 
of strongly summable ultrafilters is not provable from the axioms of $\zfc$ alone, for it implies the 
existence of P-points. The sharpest result so far in terms of existence is due to Eisworth, 
who shows in \cite[Th. 9]{eisworth} that $\covm=\mathfrak c$ suffices for ensuring the existence of a 
strongly summable ultrafilter. In a forthcoming paper, this author shows that the existence of 
strongly summable ultrafilters on any abelian group is consistent with $\zfc$ together with 
$\covm<\mathfrak c$.

The importance of this type of ultrafilters came at first from the fact that 
they are examples of idempotents in $\beta\mathbb N$, but among idempotents they are 
special in that the largest subgroup of $\mathbb N^*$ 
containing one of them as the identity is just a copy of $\mathbb Z$. More concretely, 
\cite[Th. 12.42]{hindmanstrauss} establishes that if $p\in\mathbb N^*$ is a strongly 
summable ultrafilter, and $q,r\in\beta\mathbb N$ are such that $q+r=r+q=p$, 
then $q,r\in\mathbb Z+p$. In \cite{protasov}, the authors generalize some results 
previously only known to hold for ultrafilters on $\beta\mathbb N$ or $\beta\mathbb Z$. 
In particular, they proved there (\cite[Th. 2.3]{protasov}) that every strongly 
summable ultrafilter $p$ on any abelian group $G$ is an idempotent ultrafilter. And 
\cite[Th. 4.6]{protasov} states that if $G$ can be embedded 
in $\mathbb T$, then whenever $q,r\in G^*=\beta G\setminus G$ are such that $q+r=r+q=p$, 
it must be the 
case that $q,r\in G+p$. The following definition captures an even stronger 
property than the one just mentioned.

\begin{definition}
 If $p\in\beta G$ is an idempotent element, we say that $p$ has the \textbf{trivial 
 sums property} if whenever $q,r\in\beta G$ are such that $q+r=p$, then it must 
 be the case that $q,r\in G+p$.
\end{definition}

Note that $0$ always has the trivial sums property, because $G^*$ is an ideal 
of $\beta G$. Idempotents satisfying the trivial sums property 
would be examples of so-called \textit{maximal idempotents}, i.e., maximal 
elements with respect to the two partial orders $\leq_R,\leq_L$ defined 
among idempotents by $q\leq_R r$ iff $r+q=q$ and $q\leq_L r$ iff $q+r=q$. 
It is possible to improve the result just mentioned for strongly summable 
ultrafilters if one strengthens the definition of strongly summable.

\begin{definition}
An ultrafilter $p\in\beta G$ is \textbf{sparse} if for every $A\in p$ there exist two sequences 
$\vec{x}=\langle x_n\big|n<\omega\rangle$, $\vec{y}=\langle y_n\big|n<\omega\rangle$, where 
$\vec{y}$ is a subsequence 
of $\vec{x}$ such that $\{x_n\big|n<\omega\}\setminus\{y_n\big|n<\omega\}$ is infinite, 
$\fs(\vec{x})\subseteq A$, and 
$\fs(\vec{y})\in p$.
\end{definition}

Then obviously every sparse ultrafilter will be nonprincipal and strongly summable. And by
\cite[Th. 4.5]{protasov}, if $G$ can be embedded in $\mathbb T$ and $p\in G^*$ is sparse, 
then $p$ has the trivial sums property. In some non-commutative settings (adapting the 
relevant definitions appropriately), the relationship between sparseness and an analogue 
of the trivial 
sums property has been further explored (see~\cite{lakeshia}).

It follows from results of Krautzberger (\cite[Props. 4 and 5, and Th. 4]{krautzberger}) that every 
nonprincipal 
strongly 
summable ultrafilter $p\in\mathbb N^*$ must actually be sparse. Thus the previous theorem 
holds for nonprincipal strongly summable ultrafilters on $\mathbb N$, i.e. every such 
ultrafilter, being sparse, has the trivial sums property. In \cite{jurisetal}, 
the authors followed this idea and started investigating the different kinds of abelian 
semigroups on which every nonprincipal strongly summable ultrafilter must be sparse. In 
particular, \cite[Th. 4.2]{jurisetal} establishes that if $S$ is a countable subsemigroup of 
$\mathbb T$, then every nonprincipal strongly summable ultrafilter on $S$ is sparse, so 
this generalizes the previous observation about strongly summable ultrafilters on 
$\mathbb N$. The authors built on this result to get a more general result 
(\cite[Th. 4.5 and Cor. 4.6]{jurisetal}) outlining a large class of abelian groups, whose 
nonprincipal strongly summable ultrafilters must all be sparse. More or less concurrently, 
this author showed (\cite[Th. 2.1]{yonilaboriel}) that every nonprincipal strongly summable 
ultrafilter on the Boolean group is also sparse. 
Thus Hindman, Stepr\=ans and Strauss asked (\cite[Question 4.12]{jurisetal}) whether 
every strongly summable ultrafilter on a countable abelian group is sparse.

Although it is not immediately clear that, for groups that are not embeddable in 
$\mathbb T$, sparseness implies the trivial sums property, Hindman, Stepr\=ans 
and Strauss were able to get a result, analogous to the ones mentioned in the 
previous paragraph, concerning the latter property, namely they proved 
(\cite[Th. 4.8 and Cor. 4.9]{jurisetal}) that for the same class of abelian 
groups, all nonprincipal strongly summable ultrafilters must have the trivial 
sums property. The analogous result for the Boolean group had already been 
proved, long time ago, by Protasov (\cite[Cor. 4.4]{protasums}). Thus Hindman, 
Stepr\=ans and Strauss also asked (\cite[Question 4.11]{jurisetal}) whether 
every strongly summable ultrafilter on a countable abelian group $G$ has the 
property that it can only be expressed trivially as a product (i.e. a sum) in 
$G^*$.

Section 2 develops some preliminary results that deal with union 
ultrafilters, 
additive isomorphisms and what we call here the 2-uniqueness of finite 
sums. Section 3 contains 
the answer to the two questions from \cite{jurisetal} mentioned in the 
previous paragraphs. From the proof of this result, it will turn out that, unless $p$ is a strongly 
summable ultrafilter on the Boolean group, it will be additively 
isomorphic to a union ultrafilter. Thus Section 4 deals with the 
Boolean group, the main result being that, under the assumption that $\covm=\mathfrak c$ 
(this is, under Martin's Axiom for countable forcing notions), there exists 
a strongly summable 
ultrafilter on the Boolean group that is not additively isomorphic 
to any union ultrafilter.

\section{Union ultrafilters and 2-uniqueness of finite sums}

Union ultrafilters were first defined by Blass in \cite[p. 92]{blass}, an article that 
appeared in the same 
volume as that of Hindman's (\cite{hindman-obscure}) where strongly summable ultrafilters 
are first defined. So ever since their inception, the notions of union 
ultrafilter and of strongly summable ultrafilter have always been inextricably 
related. The results of this paper are no exception, and the notion of union ultrafilter 
is essential to 
them. We thus introduce such notion. For a pairwise disjoint family 
$X\subseteq[\omega]^{<\omega}$, we 
denote the set of its \textbf{finite unions} by
\begin{equation*}
 \fu(X)=\left\{\union_{x\in a}x\bigg|a\in[X]^{<\omega}\setminus\{\varnothing\}\right\}.
\end{equation*}

\begin{definition}
A \textbf{union ultrafilter} is an ultrafilter $p$ on $[\omega]^{<\omega}$ such that for every $A\in p$ it is 
possible to find a pairwise disjoint $X\subseteq[\omega]^{<\omega}$ such that $p\ni\fu(X)\subseteq A$.
\end{definition}

The reason why union ultrafilters are so important when studying strongly summable ultrafilters, is that 
sometimes strongly summable ultrafilters can be used to construct union ultrafilters, which in turn 
are sometimes easier to handle. We will state a definition that captures the precise sense in which 
strongly 
summable ultrafilters give rise to union ultrafilters. In order to do this, we need to 
introduce a further 
notion, which stems from the fact that, when dealing with sets of the form $\fs(\vec{x})$, 
if each finite sum from this set
can be expressed 
uniquely 
as such then the situation is much more comfortable. To simplify notation, we make the convention 
that 
for any sequence $\vec{x}$ of elements of some abelian group $G$, the \textbf{empty sum} equals 
zero:
\begin{equation*}
\sum_{n\in\varnothing}x_n=0. 
\end{equation*}

\begin{definition}
 A sequence $\vec{x}$ on an abelian group $G$ is said to satisfy \textbf{uniqueness of finite sums} if 
 whenever $a,b\in[\omega]^{<\omega}$ are such that
 \begin{equation*}
  \sum_{n\in a}x_n=\sum_{n\in b}x_n,
 \end{equation*}
 it must be the case that $a=b$.
\end{definition}

In particular, if $\vec{x}$ satisfies uniqueness of finite sums then $0\notin\fs(\vec{x})$. Now we are 
ready to introduce the notion that will provide the connection between strongly summable 
ultrafilters and union ultrafilters.

\begin{definition}
Let $p$ be an ultrafilter on an abelian group $G$, and let $q$ be a union ultrafilter. We say that $p$ and 
$q$ are \textbf{additively isomorphic} if there is a sequence $\vec{x}$ of elements of $G$ satisfying 
uniqueness of finite sums, such that 
$\fs(\vec{x})\in p$, and 
there is a pairwise disjoint family $Y=\{y_n\big|n<\omega\}$ of elements of $[\omega]^{<\omega}$, 
in such a way that the mapping 
$\varphi:\fs(\vec{x})\longmapsto\fu(Y)$ given by $\varphi(\sum_{n\in a}x_n)=\union_{n\in a}y_n$ 
maps $p$ to $q$.
\end{definition}

If we are only interested in determining whether a given strongly summable ultrafilter 
$p$ is additively isomorphic to \textit{some} union ultrafilter, without worrying about which 
this ultrafilter is exactly, then we can assume without loss of generality that the isomorphism 
is fairly simple. This is established formally and precisely in the following proposition.

\begin{prop}\label{addisomissimple}
If $p$ is additively isomorphic to a union ultrafilter, and this is witnessed by the mapping 
$\sum_{n\in a}x_n\longmapsto\union_{n\in a}y_n$ from $\fs(\vec{x})$ to $\fu(Y)$, then the mapping 
$\psi:\fs(\vec{x})\longrightarrow[\omega]^{<\omega}$ given by $\psi(\sum_{n\in a}x_n)=a$ also maps $p$ to 
a union ultrafilter.
\end{prop}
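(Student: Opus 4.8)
The plan is to set $p'=\psi(p)$, which is a well-defined ultrafilter on $[\omega]^{<\omega}$ precisely because $\vec{x}$ satisfies uniqueness of finite sums (so that $\psi$ is a genuine — indeed injective — function on $\fs(\vec{x})$), and then to verify directly that $p'$ is a union ultrafilter. The organizing observation is that the witnessing map $\varphi$ factors as $\varphi=\sigma\circ\psi$, where $\sigma\colon[\omega]^{<\omega}\to[\omega]^{<\omega}$ is given by $\sigma(a)=\bigcup_{n\in a}y_n$; consequently $q=\varphi(p)=\sigma(p')$. Since the $y_n$ are pairwise disjoint and may be taken nonempty and pairwise distinct (otherwise $\varphi$ would fail to be injective), the map $\sigma$ is injective, and it is plainly a homomorphism of $([\omega]^{<\omega},\cup)$ into itself whose image on the nonempty finite sets is exactly $\fu(Y)$. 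Moreover $0\notin\fs(\vec{x})$ (again by uniqueness of finite sums), so $\psi[\fs(\vec{x})]\subseteq[\omega]^{<\omega}\setminus\{\varnothing\}$ and hence $p'$ concentrates on $[\omega]^{<\omega}\setminus\{\varnothing\}$.

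Now let $A\in p'$; replacing $A$ by $A\setminus\{\varnothing\}$ we may assume $\varnothing\notin A$, so that $\sigma[A]\subseteq\fu(Y)$, and also $\sigma[A]\in q$ because $\sigma^{-1}[\sigma[A]]\supseteq A\in p'$. Applying the definition of union ultrafilter to $q$ and the set $\sigma[A]$ yields a pairwise disjoint $X'\subseteq[\omega]^{<\omega}$ with $q\ni\fu(X')\subseteq\sigma[A]\subseteq\fu(Y)$. The next step is to pull $X'$ back through $\sigma$: let $X=\{\sigma^{-1}(x')\mid x'\in X'\}$, which makes sense since $X'\subseteq\fu(X')\subseteq\fu(Y)=\sigma\big[[\omega]^{<\omega}\setminus\{\varnothing\}\big]$ and $\sigma$ is injective. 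One checks that $X$ is again pairwise disjoint — if $x_1',x_2'\in X'$ are disjoint, a common element $m$ of $\sigma^{-1}(x_1')$ and $\sigma^{-1}(x_2')$ would force $y_m\subseteq x_1'\cap x_2'=\varnothing$, contrary to nonemptiness of $y_m$ — and that, $\sigma$ being an injective $\cup$-homomorphism carrying $X$ bijectively onto $X'$, one has $\sigma[\fu(X)]=\fu(X')$ and therefore $\fu(X)=\sigma^{-1}[\fu(X')]$.

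Finally, from $\fu(X)=\sigma^{-1}[\fu(X')]$ we conclude $\fu(X)\in p'$, because $\fu(X')\in q=\sigma(p')$; and $\fu(X)=\sigma^{-1}[\fu(X')]\subseteq\sigma^{-1}[\sigma[A]]=A$, using injectivity of $\sigma$ once more. Thus for an arbitrary $A\in p'$ we have produced a pairwise disjoint $X$ with $p'\ni\fu(X)\subseteq A$, which is exactly the assertion that $p'=\psi(p)$ is a union ultrafilter. I expect the proof to be essentially a routine verification with no deep obstacle; the points that need care are the bookkeeping with the partial maps $\psi$ and $\varphi$ and the empty set, and the claim that pulling a pairwise disjoint family back along $\sigma$ again yields a pairwise disjoint family — both of which ultimately rest on $\sigma$ being injective, i.e.\ on the $y_n$ being nonempty and pairwise disjoint.
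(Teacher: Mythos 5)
Your proof is correct and is essentially the paper's argument in different notation: the paper proves that the map $\union_{n\in a}y_n\longmapsto a$ (your $\sigma^{-1}$) sends the union ultrafilter $q$ to a union ultrafilter and then composes, while you transfer the property back along $\sigma$ directly; in both cases the substance is the same two observations, namely that a pairwise disjoint family contained in $\fu(Y)$ pulls back to a pairwise disjoint family of index sets (using that the $y_n$ are nonempty and distinct) and that finite unions are preserved. No gap.
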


\begin{proof}
We only need to show that for any union ultrafilter $q$ and any pairwise disjoint 
$Y=\{y_n\big|n<\omega\}$ 
such that $\fu(Y)\in q$, the mapping $\varphi$ given by $\union_{n\in a}y_n\longmapsto a$
maps $q$ to another 
union ultrafilter. Once we prove this, then given the hypothesis of the theorem we can just 
compose the mapping
$\varphi$ with the original isomorphism to get the $\psi$ that we need. So let 
$r$ be the image of $q$ under such mapping, and let 
$A\in r$. Then since $B=\varphi^{-1}[A]\in q$, there is a pairwise disjoint $X$ 
such that $q\ni\fu(X)\subseteq B\cap\fu(Y)$. Since $X$ is pairwise disjoint and contained 
in $\fu(Y)$, it is readily checked that for distinct $x,w\in X$, if 
$x=\union_{n\in a}y_n$ 
and $w=\union_{n\in b}y_n$ then $a\cap b=\varnothing$. Hence the family 
$Z=\{a\in[\omega]^{<\omega}\big|\union_{n\in a}y_n\in X\}$ is pairwise disjoint.
Note moreover that all finite unions are preserved in the sense that, for 
$x_0,\ldots,x_n\in X$ such that $x_i=\union_{k\in a_i}y_k$, we have that  
$\union_{i=0}^n x_i=\union_{k\in a}y_k$ where $a=\union_{i=0}^n a_i$, i.e. 
$\varphi\left(\union_{i=0}^n x_i\right)=\union_{i=0}^n \varphi(x_i)$. This means 
that 
$\varphi[\fu(X)]=\fu(Z)$,
thus $r\ni\fu(Z)\subseteq A$ and we are done.
\end{proof}

We will develop a useful criterion for knowing when a strongly summable ultrafilter 
is additively isomorphic 
to some union ultrafilter. For that, it will be useful to think of the uniqueness 
of finite sums as a 
\textit{1-uniqueness of finite 
sums}, in 
the sense that the expressions under consideration only have coefficients equal to 1. 
With this in mind, 
it is natural 
to try and define a corresponding 2-uniqueness where we allow coefficients 1 and 2. More formally,

\begin{definition}
 A sequence $\vec{x}$ on an abelian group $G$ is said to satisfy the\linebreak 
 \textbf{2-uniqueness of finite sums}
 if whenever $a,b\in[\omega]^{<\omega}$ and 
 $\varepsilon:a\longrightarrow\{1,2\},\delta:b\longrightarrow\{1,2\}$ are such that
 \begin{equation*}
  \sum_{n\in a}\varepsilon(n)x_n=\sum_{n\in b}\delta(n)x_n,
 \end{equation*}
 it must be the case that $a=b$ and $\varepsilon=\delta$.
\end{definition}

In particular, if $\vec{x}$ satisfies 2-uniqueness of finite sums, then no element of $\fs(\vec{x})$ 
can have order 2. Thus Boolean groups do not contain sequences satisfying 2-uniqueness of finite 
sums. It is of course possible to analogously define $n$-uniqueness of finite sums, for every $n$, but 
for the results of this paper we only need to consider the case $n=2$.

\begin{prop}\label{carac2uniq}
 For a sequence $\vec{x}$ on an abelian group $G$, the following are equivalent.
 \begin{enumerate}[(i)]
  \item $\vec{x}$ satisfies the 2-uniqueness of finite sums.
  \item Whenever $a,b,c,d\in[\omega]^{<\omega}$ are such that $a\cap b=\varnothing=c\cap d$, if
  \begin{equation*}
   2\sum_{n\in a}x_n+\sum_{n\in b}x_n=2\sum_{n\in c}x_n+\sum_{n\in d}x_n
  \end{equation*}
  then $a=c$ and $b=d$.
  \item Whenever $a,b,c,d\in[\omega]^{<\omega}$ are such that
  \begin{equation*}
   \sum_{n\in a}x_n+\sum_{n\in b}x_n=\sum_{n\in c}x_n+\sum_{n\in d}x_n,
  \end{equation*}
  it must be the case that $a\bigtriangleup b=c\bigtriangleup d$ and $a\cap b=c\cap d$.
 \end{enumerate}
\end{prop}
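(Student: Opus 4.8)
The plan is to prove the two equivalences (i)$\Leftrightarrow$(ii) and (ii)$\Leftrightarrow$(iii) by passing through a common normal form, everything resting on two elementary bookkeeping identities. The first is that for \emph{arbitrary} $a,b\in[\omega]^{<\omega}$,
\[
\sum_{n\in a}x_n+\sum_{n\in b}x_n=2\sum_{n\in a\cap b}x_n+\sum_{n\in a\bigtriangleup b}x_n,
\]
obtained by counting the multiplicity with which each index occurs; observe that $a\cap b$ and $a\bigtriangleup b$ are automatically disjoint. The second is that for \emph{disjoint} $a,b$ the element $2\sum_{n\in a}x_n+\sum_{n\in b}x_n$ coincides with $\sum_{n\in a\cup b}\varepsilon(n)x_n$, where $\varepsilon\colon a\cup b\to\{1,2\}$ is the (well-defined, by disjointness) function equal to $2$ on $a$ and to $1$ on $b$; moreover the assignment $(a,b)\mapsto(a\cup b,\varepsilon)$ is a \emph{bijection} from ordered pairs of disjoint finite subsets of $\omega$ onto pairs $(e,\varepsilon)$ with $\varepsilon\colon e\to\{1,2\}$, its inverse being $(e,\varepsilon)\mapsto(\varepsilon^{-1}[\{2\}],\varepsilon^{-1}[\{1\}])$.

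For (i)$\Leftrightarrow$(ii): assuming (i), take disjoint $a,b$ and disjoint $c,d$ with $2\sum_{n\in a}x_n+\sum_{n\in b}x_n=2\sum_{n\in c}x_n+\sum_{n\in d}x_n$; rewriting both sides by the second identity turns this into $\sum_{n\in a\cup b}\varepsilon(n)x_n=\sum_{n\in c\cup d}\delta(n)x_n$, so (i) forces $a\cup b=c\cup d$ and $\varepsilon=\delta$, and reading off the fibres over $2$ and over $1$ gives $a=c$ and $b=d$. Conversely, assuming (ii), take any $a,b$ and $\varepsilon\colon a\to\{1,2\}$, $\delta\colon b\to\{1,2\}$ with $\sum_{n\in a}\varepsilon(n)x_n=\sum_{n\in b}\delta(n)x_n$; split $a=\varepsilon^{-1}[\{1\}]\cup\varepsilon^{-1}[\{2\}]$ and $b=\delta^{-1}[\{1\}]\cup\delta^{-1}[\{2\}]$ into their disjoint pieces, apply the second identity in reverse on each side, invoke (ii) to obtain $\varepsilon^{-1}[\{2\}]=\delta^{-1}[\{2\}]$ and $\varepsilon^{-1}[\{1\}]=\delta^{-1}[\{1\}]$, and recombine to conclude $a=b$ and $\varepsilon=\delta$.

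For (ii)$\Leftrightarrow$(iii): the hypothesis of (iii), after applying the first identity to each side, is precisely the hypothesis of (ii) for the disjoint pairs $(a\cap b,a\bigtriangleup b)$ and $(c\cap d,c\bigtriangleup d)$, and the conclusion of (ii) for those pairs is exactly ``$a\cap b=c\cap d$ and $a\bigtriangleup b=c\bigtriangleup d$''; this yields (ii)$\Rightarrow$(iii). For the reverse, given disjoint $a,b$ and disjoint $c,d$ as in the hypothesis of (ii), set $a_1=a\cup b$, $b_1=a$, $c_1=c\cup d$, $d_1=c$; using disjointness of $a,b$ one checks $a_1\cap b_1=a$ and $a_1\bigtriangleup b_1=b$, and similarly for $c,d$, so the first identity rewrites the hypothesis of (ii) as the hypothesis of (iii) for $(a_1,b_1,c_1,d_1)$, whence (iii) returns $a=c$ and $b=d$. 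The argument involves no genuine difficulty; the one point to watch is the surjectivity of the two correspondences above — that is what makes each implication reversible — and this is exactly where the disjointness clauses in (ii) and the ``$a\cap b$ versus $a\bigtriangleup b$'' split underlying (iii) do their work.
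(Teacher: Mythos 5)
Your argument is correct and complete: the two bookkeeping identities (splitting a sum of two sums into a ``doubled intersection plus symmetric difference'' part, and the bijection between disjoint ordered pairs and coefficient functions into $\{1,2\}$) are exactly the right normal forms, and each of the four implications you give checks out. The paper itself offers no proof here --- it dismisses the proposition as ``Straightforward'' --- so there is nothing to compare against except the implicit expectation that the verification is routine; what you have written is precisely that routine verification, carefully carried out, including the surjectivity observations that make the implications reversible.
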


\begin{proof}
 Straightforward.
\end{proof}

The following two theorems do not contain any new ideas but rather they are just a useful reformulation of  
\cite[Th. 3.2]{jurisetal} (although that theorem uses a condition that is slightly weaker than the 
2-uniqueness of finite sums, namely what the authors call the ``strong uniqueness of finite sums''; however 
the version that we present here will be enough for our purposes) that cuts it into two pieces, each of 
which will be of some use in the future.
Besides, we think that the distinction made here is illuminating.

\begin{thrm}\label{addisomtounion}
Let $p$ be a strongly summable ultrafilter such that for some $\vec{x}$ satisfying 2-uniqueness of finite sums, 
$\fs(\vec{x})\in p$. Then $p$ is additively isomorphic to a union ultrafilter.
\end{thrm}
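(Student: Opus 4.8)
The plan is to exhibit the witnessing union ultrafilter directly, as a Rudin-Keisler image of $p$. Since $\vec x$ satisfies $2$-uniqueness of finite sums it trivially satisfies uniqueness of finite sums, so $0\notin\fs(\vec x)$ and every element of $\fs(\vec x)$ is of the form $\sum_{n\in a}x_n$ for a \emph{unique} nonempty $a\in[\omega]^{<\omega}$; hence the support map $\psi\colon\fs(\vec x)\to[\omega]^{<\omega}\setminus\{\varnothing\}$ given by $\psi(\sum_{n\in a}x_n)=a$ is a well-defined bijection. As $\fs(\vec x)\in p$, I can form the ultrafilter $q:=\psi(p)$ on $[\omega]^{<\omega}$. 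Taking $Y$ to be the (trivially pairwise disjoint) family of singletons $\{\{n\}\mid n<\omega\}$, the map $\varphi$ from the definition of additive isomorphism, $\varphi(\sum_{n\in a}x_n)=\union_{n\in a}\{n\}=a$, is exactly $\psi$; so it will suffice to prove that $q$ is a union ultrafilter.

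To that end, fix $A\in q$ arbitrarily and set $B:=\psi^{-1}[A]\in p$, noting $B\subseteq\fs(\vec x)$. Using that $p$ is strongly summable, pick a sequence $\vec y=\langle y_n\mid n<\omega\rangle$ with infinite range such that $p\ni\fs(\vec y)\subseteq B$, and for each $n$ put $a_n:=\psi(y_n)$, so $y_n=\sum_{k\in a_n}x_k$ and $a_n\neq\varnothing$. The main step is to show that the sets $a_n$ ($n<\omega$) are pairwise disjoint, and this is where $2$-uniqueness does the work: for $m\neq n$ the element $y_m+y_n$ lies in $\fs(\vec y)\subseteq\fs(\vec x)$, hence equals $\sum_{k\in c}x_k$ for some $c$, and applying Proposition~\ref{carac2uniq}(iii) to the equality $\sum_{k\in a_m}x_k+\sum_{k\in a_n}x_k=\sum_{k\in c}x_k+\sum_{k\in\varnothing}x_k$ gives $a_m\cap a_n=c\cap\varnothing=\varnothing$. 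Since the $a_n$ are moreover nonempty, they are pairwise distinct, so $X:=\{a_n\mid n<\omega\}$ is an infinite pairwise disjoint family (and, in passing, the $y_n$ are pairwise distinct).

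It remains to identify $\psi[\fs(\vec y)]$ with $\fu(X)$. Using the pairwise disjointness of $X$, for every nonempty $a\in[\omega]^{<\omega}$ one has $\sum_{n\in a}y_n=\sum_{n\in a}\sum_{k\in a_n}x_k=\sum_{k\in\union_{n\in a}a_n}x_k$, whence $\psi(\sum_{n\in a}y_n)=\union_{n\in a}a_n$; letting $a$ range, this gives $\psi[\fs(\vec y)]=\fu(X)$. Since $\psi$ is a bijection on $\fs(\vec x)$ and $\fs(\vec y)\subseteq\fs(\vec x)$, we get $\psi^{-1}[\fu(X)]=\fs(\vec y)\in p$, so $\fu(X)\in q$; and $\fs(\vec y)\subseteq B=\psi^{-1}[A]$ yields $\fu(X)=\psi[\fs(\vec y)]\subseteq A$. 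Thus every $A\in q$ contains some $\fu(X)\in q$ with $X$ pairwise disjoint, i.e. $q$ is a union ultrafilter; by the first paragraph, $\psi$ then witnesses that $p$ is additively isomorphic to $q$.

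I expect essentially all the content to sit in the pairwise-disjointness step, which is immediate from Proposition~\ref{carac2uniq} but genuinely uses the full strength of $2$-uniqueness (for a merely $1$-unique sequence two distinct finite sums can share a common $x_n$, and then the family $X$ produced above need not be pairwise disjoint); everything else is routine bookkeeping about Rudin-Keisler images and the bijection $\psi$. The one small point to keep an eye on is that the sequence $\vec y$ delivered by strong summability a priori need only have infinite \emph{range}, but the disjointness computation shows its terms are in fact pairwise distinct, so that $X$ is genuinely infinite and the identity $\psi[\fs(\vec y)]=\fu(X)$ holds as claimed.
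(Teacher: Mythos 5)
Your proposal is correct and follows essentially the same route as the paper's own proof: push $p$ forward along the support map, use $2$-uniqueness via Proposition~\ref{carac2uniq} to get pairwise disjointness of the supports of the generators $y_n$, and conclude that finite sums are carried to finite unions. The extra remarks (the explicit use of the empty-sum convention in part~(iii), and the observation that the $y_n$ are automatically distinct) are fine but not substantive departures.
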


\begin{proof}
We just need to check that the mapping $\varphi$ given by 
$\varphi(\sum_{n\in a}x_n)=a$ sends $p$ to a union 
ultrafilter. So let $A\in q=\varphi(p)$. Pick a sequence $\vec{y}$ such that 
$p\ni\fs(\vec{y})\subseteq\varphi^{-1}[A]$. Then $\varphi[\fs(\vec{y})]\subseteq A$. Now 
$\varphi^{-1}[A]\subseteq\fs(\vec{x})$, thus for each 
$n<\omega$ we can define $c_n\in[\omega]^{<\omega}$ by $c_n=\varphi(y_n)$ or, equivalently, by 
$y_n=\sum_{i\in c_n}x_i$. We claim that the family 
$C=\{c_n\big|n<\omega\}$ is 
pairwise disjoint. This is because if $n\neq m$, since $y_n+y_m\in\fs(\vec{y})\subseteq\fs(\vec{x})$, then 
there must be a $c\in[\omega]^{<\omega}$ such that 
\begin{equation*}
 \sum_{i\in c}x_i=y_n+y_m=\sum_{i\in c_n}x_i+\sum_{i\in c_m}x_i.
\end{equation*}
Since $\vec{x}$ satisfies 2-uniqueness of finite sums, by Proposition~\ref{carac2uniq} 
we can conclude that $c=c_n\cup c_m$ and 
$c_n\cap c_m=\varnothing$.
This argument shows at once that $C$ is a pairwise disjoint family, and that 
$\varphi(y_n+y_m)=c_n\cup c_m=\varphi(y_n)\cup\varphi(y_m)$. 
From this it is easy to 
prove by induction that $\varphi\left(\sum_{n\in a}y_n\right)=\union_{n\in a}\varphi(y_n)$, 
for all $a\in[\omega]^{<\omega}$, hence
$\varphi[\fs(\vec{y})]=\fu(C)$, therefore $q\ni\fu(C)\subseteq A$ and we are done.
\end{proof}


\begin{thrm}
Let $p$ be an ultrafilter that is additively isomorphic to a union ultrafilter. Then $p$ is sparse.
\end{thrm}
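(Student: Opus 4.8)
The plan is to unfold the definition of additive isomorphism and transport the sparseness requirement back from $[\omega]^{<\omega}$ to $G$. So let $p\in\beta G$ be additively isomorphic to a union ultrafilter $q$, witnessed by a sequence $\vec{x}$ satisfying uniqueness of finite sums with $\fs(\vec{x})\in p$, a pairwise disjoint family $Y=\{y_n\,|\,n<\omega\}$ with $\fu(Y)\in q$, and the bijection $\varphi\colon\fs(\vec{x})\to\fu(Y)$ sending $\sum_{n\in a}x_n$ to $\union_{n\in a}y_n$ that maps $p$ to $q$. Note $\varphi$ is genuinely a bijection precisely because $\vec{x}$ satisfies uniqueness of finite sums and $Y$ is pairwise disjoint, so it has a well-defined two-sided inverse, and it visibly preserves the ``finite sums/unions'' structure: $\varphi(\sum_{n\in a}x_n)=\union_{n\in a}\varphi(x_n)$. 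The first step is to fix an arbitrary $A\in p$; we want the two sequences demanded by the definition of sparseness.

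The second step is to descend to the union side. Since $\varphi(p)=q$ and $A\cap\fs(\vec{x})\in p$, the set $\varphi[A\cap\fs(\vec{x})]$ belongs to $q$, so by the definition of union ultrafilter there is a pairwise disjoint family $Z=\{z_k\,|\,k<\omega\}\subseteq[\omega]^{<\omega}$ with $q\ni\fu(Z)\subseteq\varphi[A\cap\fs(\vec{x})]\cap\fu(Y)$. Because $\fu(Z)\subseteq\fu(Y)$ and $Y,Z$ are pairwise disjoint, each $z_k$ is itself a finite union $z_k=\union_{n\in a_k}y_n$ for a unique nonempty finite $a_k\subseteq\omega$, and — exactly as in the proof of Proposition~\ref{addisomissimple} — the sets $a_k$ are pairwise disjoint. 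Now pull everything back through $\varphi^{-1}$: define $w_k=\varphi^{-1}(z_k)=\sum_{n\in a_k}x_n\in G$. Since $\varphi$ (hence $\varphi^{-1}$) preserves finite sums, $\varphi^{-1}[\fu(Z)]=\fs(\vec{w})$ where $\vec{w}=\langle w_k\,|\,k<\omega\rangle$; thus $p\ni\fs(\vec{w})$ and $\fs(\vec{w})=\varphi^{-1}[\fu(Z)]\subseteq A\cap\fs(\vec{x})\subseteq A$. Moreover $\vec{w}$ has infinite range and in fact still satisfies uniqueness of finite sums, being built from disjoint blocks of the $x_n$'s (if $\sum_{k\in s}w_k=\sum_{k\in t}w_k$ then $\sum_{n\in\union_{k\in s}a_k}x_n=\sum_{n\in\union_{k\in t}a_k}x_n$, so the index sets agree, so $s=t$ by disjointness of the $a_k$).

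The third step produces the required subsequence. Sparseness asks for $\vec{x}'$ and a subsequence $\vec{y}'$ of $\vec{x}'$ with $\{x'_n\}\setminus\{y'_n\}$ infinite, $\fs(\vec{x}')\subseteq A$, and $\fs(\vec{y}')\in p$. Take $\vec{x}'=\vec{w}$. To get the subsequence, split $\omega$ into two infinite pieces on the union side: since $\fu(Z)\in q$ and $q$ is a union ultrafilter, and since the partition of $\fu(Z)$ into those unions using infinitely-many distinct ``even-indexed'' $z_k$'s versus the rest must have one piece in $q$ — more carefully, one chooses (using that $q$ is a union ultrafilter, so for any $Z'\subseteq Z$ infinite with $Z\setminus Z'$ infinite, exactly one of $\fu(Z'),\fu(Z\setminus Z')$ lies in $q$, because union ultrafilters are ``ordered'' and in particular contain, for any bipartition of a witnessing family into two infinite halves, the finite-unions set of exactly one half; this is the standard fact that union ultrafilters decide such partitions) an infinite $Z'\subseteq Z$ with $Z\setminus Z'$ infinite and $\fu(Z')\in q$. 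Let $\vec{w}'$ enumerate $\{\varphi^{-1}(z)\,|\,z\in Z'\}$; this is a subsequence of $\vec{w}$, the complementary range $\{w_k\}\setminus\{w'_k\}$ is infinite (it contains $\varphi^{-1}[Z\setminus Z']$), and $\fs(\vec{w}')=\varphi^{-1}[\fu(Z')]\in p$ by the same preservation argument. Setting $\vec{y}'=\vec{w}'$ completes the verification.

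The main obstacle is the third step: producing the infinite subsequence whose $\fs$-set is still in $p$. The clean way to see this is to work on the union side and use that a union ultrafilter $q$ must contain $\fu$ of one of the two halves whenever a witnessing disjoint family is split into two infinite pieces — equivalently, to pull the problem all the way back to $q$ and invoke a Ramsey-type partition property of union ultrafilters. If one prefers to avoid citing that property, an alternative is to observe that $q$, being a union ultrafilter, is in particular a nonprincipal ultrafilter on the countable set $\fu(Z)$, so it cannot contain every cofinite-complement ``half,'' and a short direct argument (or appeal to the ordered structure of $Z$ under $q$) picks out the desired $Z'$; I would present whichever of these is shortest given what the paper has already set up. Everything else — the bijectivity of $\varphi$, preservation of finite sums under $\varphi^{\pm1}$, and the transfer of uniqueness of finite sums — is routine bookkeeping of the kind already carried out in Proposition~\ref{addisomissimple} and Theorem~\ref{addisomtounion}.
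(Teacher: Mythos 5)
Your first two steps match the paper's proof, but the third step — which you correctly identify as the crux — contains a genuine gap. The ``standard fact'' you invoke, that for any bipartition of a witnessing family $Z$ into two infinite halves $Z'$ and $Z\setminus Z'$ a union ultrafilter must contain $\fu(Z')$ or $\fu(Z\setminus Z')$, is false. The set $\fu(Z)$ splits into \emph{three} cells: unions using only blocks from $Z'$, unions using only blocks from $Z\setminus Z'$, and mixed unions; the mixed cell can perfectly well be the one in $q$ (e.g.\ a union ultrafilter built so that at some stage it contains $\fu(W)$ with $w_n=z_{2n}\cup z_{2n+1}$ puts the mixed cell of the even/odd partition in $q$). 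Your fallback argument fails for the same reason: the complement of $\fu(Z')$ inside $\fu(Z)$ is not $\fu(Z\setminus Z')$, so nonprincipality of $q$ gives you nothing. Even the weaker existential claim you actually need — that \emph{some} infinite, co-infinite $Z'\subseteq Z$ has $\fu(Z')\in q$ — is not justified by anything you cite, and it is essentially the whole content of the theorem; I do not believe it is provable for an arbitrary witnessing family $Z$.

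The paper gets around this by not insisting that the subsequence come from a subfamily of the original witnessing family. It invokes a nontrivial cited result (Krautzberger, Th.~4; cf.\ Th.~2.6 of Hindman--Stepr\=ans--Strauss) asserting that for a union ultrafilter $q$ with $\fu(X)\in q$ and $M=\union X$, there is $B\in q$ with $M\setminus\union B$ infinite. One then takes a pairwise disjoint $Y$ with $q\ni\fu(Y)\subseteq B\cap\fu(X)$ and forms the \emph{enlarged} family $Z=Y\cup\{x\in X\mid x\cap\union Y=\varnothing\}$, whose second part is infinite precisely because $\union Y$ is coinfinite in $M$. Pulling $Z$ back through $\varphi^{-1}$ gives the big sequence $\vec{w}$ with $\fs(\vec{w})\subseteq A$, and the subsequence corresponding to $Y$ has its $\fs$-set in $p$. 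Note that this exploits the fact that the definition of sparseness does \emph{not} require $\fs(\vec{w})\in p$, only $\fs(\vec{w})\subseteq A$ — your proposal implicitly tries to achieve the stronger condition, which is what forces you into the unprovable subfamily claim. To repair your argument you would need to import Krautzberger's theorem and restructure the third step along these lines.
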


\begin{proof}
If $p$ is additively isomorphic to some union ultrafilter, by Proposition~\ref{addisomissimple} we 
can pick a sequence $\vec{x}$ satisfying uniqueness of finite sums such that $\fs(\vec{x})\in p$, and 
such that the mapping 
$\varphi$ given by $\varphi(\sum_{n\in a}x_n)=a$ maps $p$ to a union ultrafilter $q$. Let $A\in p$, and 
let $X$ be pairwise disjoint such that $q\ni\fu(X)\subseteq\varphi[A\cap\fs(\vec{x})]$. Now let $M=\union X$. 
Since $q$ is a union ultrafilter, \cite[Th. 4]{krautzberger} (cf. also \cite[Th. 2.6]{jurisetal})
ensures that
there is $B\in q$ such that $M\setminus\union B$ is infinite. 
Without loss of generality we can assume $B\subseteq\fu(X)$, so that $\union B$ is a coinfinite subset of 
$M$. Grab a pairwise disjoint family $Y$ such that 
$q\ni\fu(Y)\subseteq B$, then $\union Y$ is a coinfinite subset of $M=\union X$ and thus there are infinitely 
many $x\in X$ that do not intersect $\union Y$ (because $Y\subseteq\fu(X)$ and 
$X$ is a pairwise disjoint family, so if $x\in X$ intersects $\union Y$ then $x\subseteq\union Y$). Thus if 
we let 
$Z=\{x\in X\big|x\cap\union Y=\varnothing\}\cup Y$ then $Z$ is a pairwise disjoint family and 
$\fu(Z)\subseteq\fu(X)\subseteq\varphi[A\cap\fs(\vec{x})]$. 
Enumerate $Z=\{z_n\big|n<\omega\}$ in such a way that $Y=\{z_{2n}\big|n<\omega\}$ and 
$\{x\in X\big|x\cap\union Y=\varnothing\}=\{z_{2n+1}\big|n<\omega\}$. Then let $\vec{w}$ be given by 
$w_n=\sum_{i\in z_n}x_i$. We get that $\fs(\vec{w})=\varphi^{-1}[\fu(Z)]\subseteq A$, and if 
$\vec{y}$ is
the subsequence of 
even elements of $\vec{w}$, then we will have that 
$|\{w_n\big|n<\omega\}\setminus\{y_n\big|n<\omega\}|$ is 
infinite and $\fs(\vec{y})=\varphi^{-1}[\fu(Y)]\in p$.
\end{proof}

\begin{cor}[\cite{jurisetal}, Th. 3.2.]\label{eldelesparcido}
 Let $p$ be a strongly summable ultrafilter on some abelian group $G$ such that there exists a sequence 
 $\vec{x}$ satisfying the 2-uniqueness of finite sums with $\fs(\vec{x})\in p$. Then $p$ is sparse.\hfill$\Box$
\end{cor}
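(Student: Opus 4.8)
The plan is to obtain Corollary~\ref{eldelesparcido} as an immediate composition of Theorem~\ref{addisomtounion} and the preceding theorem. The hypotheses are precisely those of Theorem~\ref{addisomtounion}: $p$ is strongly summable and some $\vec{x}$ satisfying $2$-uniqueness of finite sums has $\fs(\vec{x})\in p$. So the first step is simply to invoke Theorem~\ref{addisomtounion} to conclude that $p$ is additively isomorphic to a union ultrafilter. The second step is to invoke the theorem immediately preceding this corollary, which states that any ultrafilter additively isomorphic to a union ultrafilter is sparse. Chaining these two gives that $p$ is sparse, which is exactly the claim.

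There is essentially no obstacle here, since the work has already been done in the two theorems being cited; the corollary is recorded separately only because it is the precise form ($2$-uniqueness $\Rightarrow$ sparse) that recovers \cite[Th. 3.2]{jurisetal} and that will be convenient to quote later. The only thing worth a moment's care is checking that the ``union ultrafilter'' produced by Theorem~\ref{addisomtounion} is the same kind of object that the sparseness theorem takes as input — but both use the notion of union ultrafilter from the same Definition, and ``additively isomorphic'' likewise refers to the single Definition given earlier, so the fit is exact and no translation is needed.

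Accordingly, I would write the proof as a single sentence, along the lines: ``By Theorem~\ref{addisomtounion}, $p$ is additively isomorphic to a union ultrafilter; hence, by the preceding theorem, $p$ is sparse.'' This is why the statement already carries the end-of-proof box $\hfill\Box$ inline rather than a separate \begin{proof}\end{proof} block — the derivation is purely formal. If one preferred a displayed proof environment, it would contain nothing beyond that same one-line chaining of the two results, with no computation to grind through and no case analysis to organize.
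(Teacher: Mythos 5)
Your proposal is correct and matches the paper's intended argument exactly: the corollary is stated with an inline $\Box$ precisely because it follows by chaining Theorem~\ref{addisomtounion} with the immediately preceding theorem, which is the one-line derivation you describe. Nothing further is needed.
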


To finish this section, we would like to quote another result from \cite{jurisetal} that will be relevant 
in the subsequent section, and that illustrates another application of the concept of 2-uniqueness of 
finite sums.

\begin{thrm}[\cite{jurisetal}, Th. 4.8]\label{eldelassumas}
 Let $G$ be an abelian group, and $p\in G^*$ be a strongly summable ultrafilter such that there exists a 
 sequence $\vec{x}$ satisfying the 2-uniqueness of finite sums, with $\fs(\vec{x})\in p$. Then $p$ 
 has the trivial sums property.
\end{thrm}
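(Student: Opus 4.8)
The plan is to reduce the trivial sums property to a statement about the union ultrafilter to which $p$ is additively isomorphic, and there prove the corresponding statement directly. By Theorem~\ref{addisomtounion}, since $\vec{x}$ satisfies 2-uniqueness of finite sums and $\fs(\vec{x})\in p$, the mapping $\varphi(\sum_{n\in a}x_n)=a$ sends $p$ to a union ultrafilter $q$ on $[\omega]^{<\omega}$. The key observation is that $\varphi$, while only defined on $\fs(\vec{x})$, is \emph{additive} there in a controlled way: by Proposition~\ref{carac2uniq}(iii), whenever $u,v\in\fs(\vec{x})\cup\{0\}$ and $u+v\in\fs(\vec{x})\cup\{0\}$, writing $u=\sum_{n\in a}x_n$, $v=\sum_{n\in b}x_n$, we must have $a\cap b=\varnothing$ (otherwise $u+v$ would involve a coefficient $2$, contradicting membership in $\fs(\vec{x})$), and then $\varphi(u+v)=a\cup b=\varphi(u)\triangle\varphi(v)$. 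So on the relevant domain $\varphi$ behaves like a partial homomorphism from $(G,+)$ to $([\omega]^{<\omega},\triangle)$.

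First I would set up the reduction: suppose $q',r'\in\beta G$ with $q'+r'=p$. Since $\fs(\vec{x})\in p=q'+r'$, the set $W=\{w\in G\mid \fs(\vec{x})-w\in r'\}$ lies in $q'$, and for $w\in W$ the set $\fs(\vec{x})\cap(\fs(\vec{x})-w)\in r'$; in particular $w$ is a difference of two elements of $\fs(\vec{x})$. A short analysis using 2-uniqueness shows each such $w$ has the form $\sum_{n\in a}x_n-\sum_{n\in b}x_n$ with $a\cap b=\varnothing$; one then wants to push $q'$ and $r'$ forward under an appropriately extended version of $\varphi$. The clean way is: because $p$ is strongly summable we may fix a single $\vec{y}$ with $\fs(\vec{y})\subseteq\fs(\vec{x})$, $\fs(\vec{y})\in p$, the $\vec{y}$ also with 2-uniqueness of finite sums, and work inside $\fs(\vec{y})$; then $\varphi(\sum_{n\in a}y_n)=\union_{n\in a}\varphi(y_n)$ with $C=\{\varphi(y_n)\}$ pairwise disjoint, exactly as in the proof of Theorem~\ref{addisomtounion}. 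This makes $\varphi\restriction\fs(\vec{y})$ a genuine isomorphism of $\fs(\vec{y})$ onto $\fu(C)$ of partial semigroups, intertwining $+$ with $\union$, and it extends to $\bar{\varphi}:\overline{\fs(\vec{y})}\to\overline{\fu(C)}$.

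Next I would invoke the known trivial sums behavior of union ultrafilters. It is a theorem of Blass (and appears in this circle of ideas, e.g. via Krautzberger) that every union ultrafilter $q$ has the trivial sums property in $(\beta[\omega]^{<\omega},\union)$: if $q_1\union q_2=q$ then $q_1,q_2$ differ from $q$ only by a finite set, i.e. lie in $[\omega]^{<\omega}+q$. Granting this, the strategy is: transport the equation $q'+r'=p$ through $\bar\varphi$ to get $\bar\varphi(q')\union\bar\varphi(r')=q$ (using that $\varphi$ is a partial homomorphism, the disjointness forced by 2-uniqueness guaranteeing the sum stays in the domain $q$-often), conclude $\bar\varphi(q')=c\union q$ and $\bar\varphi(r')=d\union q$ for some finite $c,d$, and then transport back through $\varphi^{-1}$ (which is additive on $\fu(C)$, sending $\union$ to $+$) to deduce $q'\in g+p$ and $r'\in g'+p$ for suitable $g,g'\in G$ — namely $g=\sum_{i\in c}x_i$ interpreted correctly — which is exactly the conclusion $q',r'\in G+p$.

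The main obstacle is the domain issue: $\varphi$ is only a \emph{partial} function, defined on $\fs(\vec{x})$ (or $\fs(\vec{y})$), so a priori $q'$ and $r'$ need not concentrate on that set, and the forward image $\bar\varphi(q')$ may not be defined. The way around it is to show first that $q',r'\in\overline{\fs(\vec{x})}$: from $\fs(\vec{x})\in p$ and 2-uniqueness one argues, much as in \cite[Th.~12.42]{hindmanstrauss}, that any $q',r'$ summing to $p$ must be supported on the set of ``initial/final'' partial sums of $\vec{x}$, which lie in $\fs(\vec{x})\cup\{0\}$ up to a shift by an element of $G$. This localization step — controlling where $q',r'$ live using only the additive structure and 2-uniqueness — is the technical heart; once it is in place, the transport argument and the appeal to the trivial sums property of union ultrafilters are routine. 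I would therefore spend most of the proof on establishing that $q'$ and $r'$ are (modulo translation by a group element) concentrated on $\fs(\vec{x})$, and only then run the isomorphism argument above.
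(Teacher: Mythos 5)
First, a point of comparison: the paper does not actually prove this statement --- it is quoted verbatim from \cite[Th. 4.8]{jurisetal} as an imported result, so there is no internal proof to measure yours against. Judged on its own merits, your proposal has a genuine gap. You correctly identify the localization step --- showing that any $q',r'$ with $q'+r'=p$ must, after translation by elements of $G$, concentrate on $\overline{\fs(\vec{x})}$ or on some set of partial sums whose additive structure is controlled by $2$-uniqueness --- as ``the technical heart,'' but you do not carry it out; you defer it to ``much as in \cite[Th.~12.42]{hindmanstrauss}.'' That localization \emph{is} the theorem. The argument for Th.~12.42 leans on the linear order of $\mathbb N$ and the growth of the generating sequence, neither of which is available in an arbitrary abelian group; replacing them is exactly what the $2$-uniqueness hypothesis and the embedding into $\bigoplus_{n<\omega}\mathbb T$ are for in \cite{jurisetal}. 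Moreover, once the localization is in place the conclusion follows directly, so the detour through union ultrafilters adds nothing while importing a second unproved ingredient: the assertion that every union ultrafilter has the trivial sums property in $(\beta[\omega]^{<\omega},\cup)$. That is not a citable theorem of Blass in the form you state it (the results you allude to concern ordered unions, P-points and sparseness, not factorizations in the total semigroup $(\beta[\omega]^{<\omega},\cup)$; the closest published analogues are in \cite{lakeshia}), and proving it would itself require the very localization argument you are trying to sidestep.

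There is also a local error in your setup. From $w\in W=\{w\in G\mid \fs(\vec{x})-w\in r'\}$ you conclude that $\fs(\vec{x})\cap(\fs(\vec{x})-w)\in r'$, hence that $w$ is a difference of two elements of $\fs(\vec{x})$; but this needs $\fs(\vec{x})\in r'$, which is not known and is essentially what you are trying to establish (up to translation). The standard repair is to intersect $(\fs(\vec{x})-w)\cap(\fs(\vec{x})-w')$ for two elements $w,w'\in W$, which only yields $w-w'\in\fs(\vec{x})-\fs(\vec{x})$; converting that information about differences into the conclusion $q'\in G+p$ is again the real work. In short, your outline locates the difficulty accurately but does not overcome it, and the reduction to union ultrafilters is a detour rather than a proof.
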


\section{Strongly Summable Ultrafilters are Sparse and have the Trivial Sums Property}

The main result of this section tells us that almost all strongly summable ultrafilters on abelian 
groups have $\fs$-sets generated from sequences that satisfy 2-uniqueness of finite sums. 
As a consequence 
of that, almost all strongly summable ultrafilters on abelian groups 
are essentially union ultrafilters (because of Theorem~\ref{addisomtounion}), and 
this helps solve \cite[Questions 4.11 and 4.12]{jurisetal}. More precisely, we have the following theorem 
and corollary.

\begin{thrm}\label{mainresult}
 Let $G$ be an abelian group, and let $p\in G^*$ be a strongly summable ultrafilter such that 
 \begin{equation*}
  \{x\in G\big|o(x)=2\}\notin p.
 \end{equation*}
 Then, there exists a sequence $\vec{x}$ of elements of $G$ satisfying the 2-uniqueness of 
 finite sums such 
 that $\fs(\vec{x})\in p$.
\end{thrm}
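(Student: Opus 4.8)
The plan is to start from an arbitrary $\fs$-set $\fs(\vec z)\in p$ witnessing strong summability, together with the set $A=\{x\in G\mid o(x)=2\}\notin p$, and produce from it a sequence $\vec x$ with $\fs(\vec x)\in p$ that also satisfies the $2$-uniqueness of finite sums. The first reduction is to pass to a sequence avoiding the $2$-torsion: since $A\notin p$, we may intersect and assume no element of $\fs(\vec z)$ has order $2$; more carefully, using the fact (from \cite{hindmanstrauss}) that $p$ has a base of $\fs$-sets and that finite sums of sums are again in the set, one can thin $\vec z$ to a subsequence all of whose finite sums lie outside $A$, so every element of $\fs(\vec x)$ has order $\neq 2$ (equivalently $>2$ or infinite). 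The point of this is that the obstruction noted after the definition of $2$-uniqueness — elements of order $2$ — is exactly what we have legislated away, so the remaining work is to handle the ``accidental'' additive relations $\sum_{n\in a}\varepsilon(n)x_n=\sum_{n\in b}\delta(n)x_n$ with coefficients in $\{1,2\}$.

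Next I would recall the standard fact that, for a strongly summable ultrafilter, given any $\fs(\vec x)\in p$ one can find a \emph{condensation} $\vec y$ of $\vec x$ — meaning $y_n=\sum_{i\in I_n}x_i$ for a partition of $\omega$ (or of an infinite subset) into consecutive finite blocks $I_n$ — with $\fs(\vec y)\in p$ and with $\vec y$ as ``spread out'' as we like. The key structural input is that $p$ being an idempotent (equivalently, by \cite[Th.~2.3]{protasov}, every strongly summable ultrafilter is idempotent) lets us, for any $A\in p$, find $\fs(\vec y)\subseteq A$ with $\vec y$ a condensation of the previously chosen sequence: one uses the iterated Galvin–Glazer argument inside $\fs(\vec x)$. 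So the construction is recursive: build $x_0,x_1,\dots$ one at a time; having chosen $x_0,\dots,x_{n-1}$, the set of ``bad'' group elements that would create a coefficient-$\{1,2\}$ collision with the sums already committed to is a \emph{finite} set (finitely many finite sums $\sum_{i<n}\varepsilon(i)x_i$, hence finitely many elements, hence their finitely many translates/differences), so its complement is in $p$; intersect with the appropriate tail $\fs$-set and pick $x_n$ there, far enough out.

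The main obstacle, and the step deserving the most care, is making the ``finitely many bad elements'' bookkeeping genuinely work with coefficients from $\{1,2\}$ rather than just $\{0,1\}$ — i.e. ensuring that a new element $x_n$ chosen to avoid collisions among $\{x_0,\dots,x_n\}$ does not later conspire with elements $x_m$, $m>n$, to produce a relation. Here is where Proposition~\ref{carac2uniq} earns its keep: by clause (ii) it suffices to prevent relations of the special form $2\sum_{n\in a}x_n+\sum_{n\in b}x_n=2\sum_{n\in c}x_n+\sum_{n\in d}x_n$ with $a\cap b=\varnothing=c\cap d$, and one analyses such a relation by looking at the largest index involved. If that index is $n$ and it occurs, say, in $a\setminus c$, then moving everything to one side expresses $x_n$ or $2x_n$ as a $\{1,2\}$-combination of $\{x_i\mid i<n\}$ together with lower-indexed terms on the other side; so at stage $n$ we only need $x_n$ (and $2x_n$, using order $\neq 2$ to keep these distinct from the coefficient-$1$ case) to avoid a finite, already-determined set of values and a finite set of their ``halves'' and ``doubles'' — all of which is finite and hence co-$p$. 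One must check that ``halving'' causes no trouble: the equation $2x_n=g$ has at most $|G[2]|$ solutions in general, but what we actually need is that the \emph{previously chosen} elements $x_i$, $i<n$, do not get hit, and there are only finitely many of those, so we forbid $x_n$ from lying in the finite set $\{g\in G\mid 2g\in\{\text{committed sums}\}\}\cup\{\text{committed sums}\}$, which is legitimate since $p$ is nonprincipal. Carrying out this recursion and verifying, via Proposition~\ref{carac2uniq}(ii) applied to the final sequence, that no $\{1,2\}$-relation survives, completes the proof; combined with Theorem~\ref{addisomtounion}, Corollary~\ref{eldelesparcido}, and Theorem~\ref{eldelassumas} it yields sparseness and the trivial sums property for all such $p$.
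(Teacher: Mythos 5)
There is a genuine gap, and it is the central one: your recursive construction never yields $\fs(\vec x)\in p$, which is what the theorem demands. Galvin--Glazer applied to an idempotent gives, for $A\in p$, a sequence $\vec y$ with $\fs(\vec y)\subseteq A$, but \emph{not} $\fs(\vec y)\in p$ --- that is precisely the gap between idempotents and strongly summable ultrafilters. Strong summability hands you a witnessing sequence wholesale for each $A\in p$; it does not license choosing $x_0,x_1,\dots$ one at a time against a countable decreasing chain of constraints and then concluding that the finite-sums set of the diagonal sequence is still in $p$. That would require a stability property (in the sense of stable ordered/union ultrafilters) which strongly summable ultrafilters need not possess. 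For exactly this reason the paper's proof has a different shape: it produces a single set $X\in p$ with the property that \emph{every} sequence $\vec x$ with $\fs(\vec x)\subseteq X$ automatically satisfies $2$-uniqueness of finite sums, and only then invokes strong summability once to extract a sequence with $\fs(\vec x)\in p$ inside $X$. Obtaining such an $X$ is where all the work goes: one reduces to countable $G\subseteq\bigoplus_{n<\omega}\mathbb T$ and splits into cases according to whether $p$ concentrates on elements of order $3$, order $4$, or order $>4$ (and, in the last case, according to the value of the first coordinate not in $\{0,\pm\tfrac14,\tfrac12\}$), using the Hindman--Stepr\=ans--Strauss theorem, a direct order-$4$ computation, and the homomorphism $x\mapsto 4x$.

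A secondary but real error: the ``finitely many bad elements'' claim fails once coefficient $2$ enters. The solution set of $2x_n=s$ is a coset of the $2$-torsion subgroup $G[2]$, which may be infinite (e.g.\ in $\bigoplus_{n<\omega}\mathbb Z_4$), so its complement is not in $p$ merely because $p$ is nonprincipal. One can still argue that such a coset avoids $p$ (a nontrivial coset of a subgroup $H\notin p$ cannot belong to an idempotent), but that argument is absent from your sketch, and the order-$4$ case shows the difficulty is not cosmetic: there the relation $2\sum_{n\in a'}x_n=2\sum_{n\in c'}x_n$ must be killed by an exact-order argument ($4\sum x_n=0$ while every element of $\fs(\vec x)$ has order exactly $4$), not by avoiding finitely many elements.
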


\begin{cor}\label{nonbooleanimpliesunion}
 Let $G$ be an abelian group, and let $p\in G^*$ be a strongly summable ultrafilter such that 
 \begin{equation*}
  \{x\in G\big|o(x)=2\}\notin p.
 \end{equation*}
 Then $p$ is additively isomorphic to some union ultrafilter.
\end{cor}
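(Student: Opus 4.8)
This is immediate once Theorem~\ref{mainresult} is in hand. The plan is to feed the output of Theorem~\ref{mainresult} directly into Theorem~\ref{addisomtounion}. Given a strongly summable ultrafilter $p\in G^*$ with $\{x\in G\mid o(x)=2\}\notin p$, Theorem~\ref{mainresult} provides a sequence $\vec{x}$ of elements of $G$ satisfying the 2-uniqueness of finite sums with $\fs(\vec{x})\in p$. But that is precisely the hypothesis of Theorem~\ref{addisomtounion}, which then yields that $p$ is additively isomorphic to a union ultrafilter.

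So there is essentially no obstacle in the corollary itself; all the work has been done in Theorem~\ref{mainresult}. I would write the proof in one or two lines, simply chaining the two cited results, perhaps pausing only to note that ``strongly summable'' is preserved and that no additional hypothesis is needed for the passage through Theorem~\ref{addisomtounion}.

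\begin{proof}
By Theorem~\ref{mainresult}, there is a sequence $\vec{x}$ of elements of $G$ satisfying the 2-uniqueness of finite sums with $\fs(\vec{x})\in p$. Since $p$ is strongly summable, Theorem~\ref{addisomtounion} applies and gives that $p$ is additively isomorphic to a union ultrafilter.
\end{proof}

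The only genuine difficulty lurking behind this statement is entirely contained in Theorem~\ref{mainresult}: one must start from an arbitrary $\fs$-set in $p$ and thin it out (using strong summability repeatedly, together with the hypothesis that order-$2$ elements are not in $p$) to obtain a \emph{sub}-$\fs$-set still in $p$ whose generating sequence has no nontrivial coincidences among expressions with coefficients in $\{1,2\}$. That is where the combinatorics of sums in a general abelian group, and the exclusion of the Boolean case, really enter; the corollary is just the bookkeeping that records the consequence.
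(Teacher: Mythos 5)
Your proof is correct and matches the paper exactly: the corollary is stated as an immediate consequence of Theorem~\ref{mainresult} combined with Theorem~\ref{addisomtounion}, which is precisely the chaining you describe. Nothing further is needed.
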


In order to prove this result, we will need to break the proof down into several subcases.


\begin{lem}\label{order4}
 Let $G$ be an abelian group, and let $X=\{x\in G\big|o(x)=4\}$. If $\vec{x}$ is a sequence 
 of elements of $G$ such that $\fs(\vec{x})\subseteq X$, then $\vec{x}$ must satisfy 2-uniqueness 
 of finite sums.
\end{lem}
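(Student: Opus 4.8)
The plan is to argue by contradiction. Suppose $\vec{x} = \langle x_n \mid n < \omega \rangle$ has $\fs(\vec{x}) \subseteq X$ but fails 2-uniqueness of finite sums. By Proposition~\ref{carac2uniq}(iii), this gives us $a,b,c,d \in [\omega]^{<\omega}$ with
\begin{equation*}
 \sum_{n\in a}x_n+\sum_{n\in b}x_n=\sum_{n\in c}x_n+\sum_{n\in d}x_n
\end{equation*}
but with $(a\bigtriangleup b, a\cap b) \neq (c\bigtriangleup d, c\cap d)$. First I would rearrange this to collect everything on one side, writing the equation as a relation of the form $\sum_{n\in e} x_n = \sum_{n \in f} 2x_n + \sum_{n\in g} 3x_n$ (grouping indices according to how many of the multisets $a,b$ versus $c,d$ they land in, and moving terms across using that $-x_n = 3x_n$ since $o(x_n) = 4$). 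Since every element of $\fs(\vec{x})$ has order exactly $4$, in particular each $x_n$ has order $4$, and $2x_n$ has order $2$; so $2 \cdot \bigl(\sum_{n\in e}x_n\bigr) = 0$. But $\sum_{n\in e}x_n \in \fs(\vec{x}) \cup \{0\} \subseteq X \cup \{0\}$, and no element of $X$ has order dividing $2$, so $\sum_{n\in e}x_n = 0$, forcing (after unwinding) that the symmetric-difference and intersection data of the two sides in fact agree — a contradiction.

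So the heart of the argument is really a $2$-torsion computation. Let me be a little more careful about the bookkeeping, since that is where the one genuine subtlety lies. Multiply the displayed equation by $2$: using $o(x_n) \mid 4$ we get $2x_n + 2x_n = 4x_n = 0$, so doubling kills every term that appears an even number of times and leaves $2\bigl(\sum_{n \in a\triangle b} x_n\bigr) = 2\bigl(\sum_{n \in c\triangle d} x_n\bigr)$, i.e. $2\bigl(\sum_{n\in (a\triangle b)\triangle(c\triangle d)} x_n\bigr) = 0$. Now $\sum_{n \in (a\triangle b)\triangle(c\triangle d)} x_n$ is either $0$ or an element of $\fs(\vec{x}) \subseteq X$; the latter is impossible as elements of $X$ have order $4 \nmid 2$. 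Hence this sum is $0$, and since $0 \notin \fs(\vec{x})$ (any element of $\fs(\vec x)$ has order $4$, not $1$) we must have $(a\triangle b)\triangle(c\triangle d) = \varnothing$, i.e. $a \triangle b = c \triangle d$. Substituting back into the original equation and cancelling the common part $\sum_{n \in a\triangle b}x_n$ leaves $2\bigl(\sum_{n\in a\cap b} x_n\bigr) = 2\bigl(\sum_{n\in c\cap d} x_n\bigr)$, and the same torsion argument applied once more yields $a \cap b = c \cap d$. This contradicts the failure of 2-uniqueness via Proposition~\ref{carac2uniq}(iii), completing the proof.

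The main obstacle I anticipate is purely organizational: making sure the cancellation step "subtract $\sum_{n\in a\triangle b}x_n$ from both sides" is legitimate, and tracking that the residual sums genuinely lie in $\fs(\vec{x}) \cup \{0\}$ rather than in some larger set involving coefficient-$2$ or coefficient-$3$ terms. The clean way to handle this is to observe that for any $h \in [\omega]^{<\omega}$, the element $2\bigl(\sum_{n\in h} x_n\bigr)$ is $0$ whenever $\sum_{n\in h}x_n = 0$, and otherwise equals $2y$ for $y = \sum_{n\in h}x_n \in X$, and $2y \neq 0$ since $o(y) = 4$; so $2\bigl(\sum_{n\in h}x_n\bigr) = 0 \iff \sum_{n\in h}x_n = 0 \iff h = \varnothing$ (using $0 \notin \fs(\vec{x})$). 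With that equivalence isolated as the one technical fact, everything else is a direct manipulation of finite index sets, and there are no case splits of any depth. I would also remark that the hypothesis can be weakened to $o(x_n) = 4$ for all $n$ together with $0 \notin \fs(\vec x)$ and $2 \fs(\vec x) \cap \{0\} = \varnothing$, but the stated form "$\fs(\vec{x}) \subseteq X$" packages exactly these consequences, so I would not belabor the point.
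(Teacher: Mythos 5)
Your proof is correct and rests on the same key observation as the paper's: any nonzero element of $\fs(\vec{x})$ has order exactly $4$, so it can never have order dividing $2$, and this forces the relevant index sets to be empty. The only cosmetic difference is that you route the argument through clause (iii) of Proposition~\ref{carac2uniq} (doubling the equation first to kill the intersection parts), while the paper uses clause (ii) (cancelling common terms first); both come down to the same two applications of the order-$4$ versus order-$2$ dichotomy.
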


\begin{proof}
 Assume that $\vec{x}$ is such that $\fs(\vec{x})\subseteq X$. By Proposition~\ref{carac2uniq}, 
 in order to prove that $\vec{x}$ satisfies 2-uniqueness of finite sums, it suffices to show that 
 whenever $a,b,c,d$ are such that $a\cap b=\varnothing=c\cap d$ and 
 \begin{equation*}
  2\sum_{n\in a}x_n+\sum_{n\in b}x_n=2\sum_{n\in c}x_n+\sum_{n\in d}x_n,
 \end{equation*}
 then $a=c$ and $b=d$. Now for each $n\in b\cap d$ we can cancel the term $x_n$ from both sides 
 of the previous equation; and similarly for each $n\in a\cap c$ we can cancel the term 
 $2x_n$ from both sides of the equation, which thus becomes 
 \begin{equation}\label{strongu1}
 2\sum_{n\in a'}x_n+\sum_{n\in b'}x_n=2\sum_{n\in c'}x_n+\sum_{n\in d'}x_n,
 \end{equation}
 where $a'=a\setminus(a\cap c)$, $b'=b\setminus(b\cap d)$, $c'=c\setminus(a\cap c)$ 
 and $d'=d\setminus(b\cap d)$. Since $b'$ is disjoint 
 from $d'$, Equation~(\ref{strongu1}) yields 
 \begin{equation*}
  \sum_{n\in b'\cup d'}x_n=\sum_{n\in b'}x_n+\sum_{n\in d'}x_n=-2\sum_{n\in a'}x_n+2\sum_{n\in c'}x_n+2\sum_{n\in d'}x_n,
 \end{equation*}
 where the right-hand side is either the identity or has order $2$, while the left-hand sideis either the 
 identity or has order $4$. Hence both sides of this equation must be the identity, and so $b'\cup d'=\varnothing$, 
 this is, $b'=d'=\varnothing$ and hence $b=b\cap d=d$. Therefore (\ref{strongu1}) becomes 
 \begin{equation*}
  2\sum_{n\in a'}x_n=2\sum_{n\in c'}x_n,
 \end{equation*}
 which in turn implies that 
 \begin{equation*}
  2\sum_{n\in a'\cup c'}x_n=4\sum_{n\in c'}x_n=0,
 \end{equation*}
 and this can only happen if $a'\cup c'=\varnothing$, which means 
 that $a'=c'=\varnothing$ and hence $a=a\cap c=d$. So we have that 
 $\vec{x}$ satisfies 2-uniqueness 
 of finite sums.
\end{proof}

If $G$ is any abelian group, and $p\in G^*$ is strongly summable, then there must be 
a countable subgroup $H$ such that $H\in p$ (e.g. take any $\fs$ set in $p$ because 
of strong summability, and then let $H$ be the subgroup generated by such $\fs$ set), 
and certainly the restricted ultrafilter $p\upharpoonright H=p\cap\mathfrak P(H)$ 
will also be strongly summable. If we prove that $p\upharpoonright H$ contains a set 
of the form $\fs(\vec{x})$ for a sequence $\vec{x}$ satisfying 2-uniqueness of finite 
sums, then certainly so does $p$ itself, because $p$ is just the ultrafilter generated 
in $G$ by $p\upharpoonright H$ and in particular $p\upharpoonright H\subseteq p$. 
Hence in order to prove Theorem~\ref{mainresult}, 
it suffices to consider only countable abelian groups $G$, and we will do so in the 
remainder of this section.

Now, it is a well-known result (this is mentioned in \cite[p. 123, Sect. 1]{protasov}, 
and thoroughly 
discussed at the beginning of 
\cite[Section 3]{yonilaboriel}) that every countable abelian group $G$ can be 
embedded 
in a countable direct sum of circle groups $\bigoplus_{n<\omega}\mathbb T$. Thus 
from now on we will use this fact liberally, in particular all elements $x$ of the 
abelian group under consideration will be thought of as $\omega$-sequences, each 
of whose terms is an element of $\mathbb T$. We will denote by $\pi_n$ the 
projection map onto the $n$-th. coordinate, i.e. $\pi_n(x)$ is the $n$th. term 
of the sequence that $x$ represents.

\begin{definition}
 When dealing with an arbitrary (countable) abelian group $G$, we will denote by 
 $Q(G)=\{x\in G\big|o(x)>4\}$. Since elements of $G$ are elements of 
 $\bigoplus_{n<\omega}\mathbb T$, if $x\in Q(G)$ then there is an $n<\omega$ such 
 that $\pi_n(x)\notin\left\{0,\frac{1}{4},-\frac{1}{4},\frac{1}{2}\right\}$. We
 will denote the least such $n$ by $\rho(x)$.
\end{definition}

At this point it is worth recalling the following theorem of Hindman, Stepr\=ans 
and Strauss.

\begin{thrm}[\cite{jurisetal}, Th. 4.5]\label{theoremjuris}
Let $S$ be a countable subsemigroup of $\bigoplus_{n<\omega}\mathbb T$, and let $p$ be a 
nonprincipal strongly summable ultrafilter on $S$. If
\begin{equation*}
 \left\{x\in S\big|\pi_{\min(x)}(x)\neq\frac{1}{2}\right\}\in p,
\end{equation*}
where $\min(x)$ denotes the least $n$ such that $\pi_n(x)\neq0$, then there exists a 
set $X\in p$
such that for every sequence $\vec{x}$ of elements of $\bigoplus_{n<\omega}\mathbb T$, if 
$\fs(\vec{x})\subseteq X$ then $\vec{x}$ must satisfy 2-uniqueness of finite sums.
\end{thrm}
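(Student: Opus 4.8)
The plan is to shrink $X$ to a single tightly controlled FS-set $\fs(\vec y)$ with $\fs(\vec y)\in p$, arranged so that any would-be witness to a failure of 2-uniqueness inside it is already contradicted at its least active coordinate. Strong summability together with the hypothesis first gives $\vec y$ with $\fs(\vec y)\in p$ and $\fs(\vec y)\subseteq\{x\in S : x\neq 0\text{ and }\pi_{\min(x)}(x)\neq\tfrac12\}$; in particular $0\notin\fs(\vec y)$, and no element of $\fs(\vec y)$ has order $2$, since an order-$2$ element of $\bigoplus_{n<\omega}\mathbb T$ has all of its nonzero coordinates equal to $\tfrac12$. I would then thin $\vec y$ inside $p$ according to the behaviour of $x\mapsto\min(x)$, splitting into the cases where the ultrafilter $\min(p)$ on $\omega$ is, respectively, nonprincipal and principal.

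In the nonprincipal case, $\{x : \min(x)>N\}\in p$ for every $N$, so by the standard FS-subsequence construction for strongly summable ultrafilters I may further thin $\vec y$ to satisfy $\max\supp(y_n)<\min\supp(y_{n+1})$ for all $n$, so that the supports of the $y_n$ are pairwise disjoint increasing blocks; put $X=\fs(\vec y)$. Given any $\vec x$ with $\fs(\vec x)\subseteq X$, fix representations $x_n=\sum_{i\in t_n}y_i$. The $t_n$ are pairwise disjoint: if $i_0\in t_n\cap t_{n'}$ with $n\neq n'$, then at $k=\min\supp(y_{i_0})$ only $y_{i_0}$ among the generators is active, so $x_n+x_{n'}\in\fs(\vec y)$ takes the value $2\pi_k(y_{i_0})$ there, while every element of $\fs(\vec y)$ takes value $0$ or $\pi_k(y_{i_0})$ there, contradicting $\pi_k(y_{i_0})=\pi_{\min(y_{i_0})}(y_{i_0})\notin\{0,\tfrac12\}$. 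Now suppose, via Proposition~\ref{carac2uniq}(ii), that $a\cap b=\varnothing=c\cap d$ and $2\sum_{n\in a}x_n+\sum_{n\in b}x_n=2\sum_{n\in c}x_n+\sum_{n\in d}x_n$; collecting terms gives $\sum_n\nu_n x_n=0$ with $\nu_n\in\{-2,-1,0,1,2\}$, and expanding over the $y_i$ and using disjointness of the $t_n$ turns this into $\sum_i\mu_i y_i=0$ with each $\mu_i$ a single $\nu_n$, hence $\mu_i\in\{-2,-1,0,1,2\}$. Evaluating at $\min\supp(y_i)$ yields $\mu_i\cdot\pi_{\min(y_i)}(y_i)=0$ in $\mathbb T$; since the second factor is neither $0$ nor $\tfrac12$, it is killed by no integer of absolute value $\leq 2$, so every $\mu_i=0$, hence every $\nu_n=0$, which given $a\cap b=\varnothing=c\cap d$ forces $a=c$ and $b=d$.

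In the principal case, $\min(p)$ principal at $m$, thin so $\fs(\vec y)\subseteq\{x : \min(x)=m\}$. Then $\pi_m$ pushes $p$ to a strongly summable ultrafilter $u$ on the countable subsemigroup $\pi_m[S]$ of $\mathbb T$; as $u$ is idempotent, its limit in the compact group $\mathbb T$ is an idempotent of $\mathbb T$, namely $0$, so $\{x : |\pi_m(x)|<\varepsilon\}\in p$ for every $\varepsilon>0$. Together with $\{x : \pi_m(x)\neq 0\}\in p$ this lets me thin $\vec y$ so that $|\pi_m(y_{j+1})|<\tfrac15|\pi_m(y_j)|$ for all $j$, after which a routine magnitude estimate shows that $\langle\pi_m(y_j)\rangle_j$ satisfies 2-uniqueness of finite sums; put $X=\fs(\vec y)$. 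Now for any $\vec x$ with $\fs(\vec x)\subseteq X$ and $x_n=\sum_{i\in t_n}y_i$, applying $\pi_m$ to $x_n+x_{n'}\in\fs(\vec y)$ and using 2-uniqueness of $\langle\pi_m(y_j)\rangle$ through Proposition~\ref{carac2uniq} again forces the $t_n$ to be pairwise disjoint; a failure of 2-uniqueness then produces $\sum_i\mu_i y_i=0$ with $\mu_i\in\{-2,-1,0,1,2\}$, and projecting to coordinate $m$ and separating the positive and negative $\mu_i$, 2-uniqueness of $\langle\pi_m(y_j)\rangle$ forces every $\mu_i=0$, whence $a=c$ and $b=d$ exactly as before.

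I expect the two load-bearing ingredients to be the infrastructure for thinning FS-sets inside a strongly summable ultrafilter while remaining in $p$, which is invoked at every step, and the principal case itself: recognising that the trace of $p$ on the single coordinate $m$ accumulates only at $0$, so that it can be made super-lacunary, is what promotes $\langle\pi_m(y_j)\rangle$ to a 2-unique sequence and makes the one-coordinate localisation succeed. The remainder is bookkeeping with Proposition~\ref{carac2uniq}.
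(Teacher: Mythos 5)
This statement is imported verbatim from Hindman--Stepr\=ans--Strauss (\cite[Th.~4.5]{jurisetal}); the paper gives no proof of it, so I can only judge your argument on its own terms. Your overall architecture is sensible --- splitting on whether the pushforward $\min(p)$ is principal, and in the principal case projecting to the single coordinate $m$ and using that a nonprincipal idempotent on a subsemigroup of $\mathbb T$ must converge to $0$ --- and that second idea really is the engine of the cited proof, which reduces to the case of a countable subsemigroup of $\mathbb T$.

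The genuine gap is the thinning step on which both branches rest. In the nonprincipal branch you assert that ``the standard FS-subsequence construction'' lets you replace $\vec y$ by a sequence with $\max\supp(y_n)<\min\supp(y_{n+1})$ while keeping $\fs(\vec y)\in p$, and in the principal branch you likewise thin so that $|\pi_m(y_{j+1})|<\tfrac15|\pi_m(y_j)|$ while keeping $\fs(\vec y)\in p$. No such construction is available: strong summability hands you, for each \emph{single} $A\in p$, some $\vec z$ with $p\ni\fs(\vec z)\subseteq A$, but your requirements are a countable sequence of conditions each depending on the terms already chosen, and the diagonal sequence produced by running the Galvin--Glazer-type recursion has no reason to have its FS-set in $p$ (an infinite intersection of members of $p$ need not be in $p$). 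Worse, what you need in the nonprincipal case --- a single $\fs(\vec y)\in p$ whose generators have pairwise disjoint, order-separated coordinate supports --- is essentially the ordered-union-ultrafilter phenomenon, which is known to be a strictly stronger property that strongly summable ultrafilters need not exhibit; so this step is not merely unjustified but is in general false, and assuming it comes close to assuming the conclusion the theorem is ultimately used to derive (that $p$ is additively isomorphic to a union ultrafilter). The repair has to run in the opposite direction: one must exhibit a set $X\in p$ cut out by \emph{intrinsic} conditions on each element (in the cited proof, conditions on the dyadic scale of $\pi_{\min(x)}(x)$, isolated using strong summability and the convergence of the projected idempotent to $0$) and then show that \emph{every} sequence $\vec x$ with $\fs(\vec x)\subseteq X$ is forced to satisfy $2$-uniqueness, rather than hand-picking a well-behaved generating sequence and hoping its FS-set lands in $p$. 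Your closing bookkeeping with Proposition~\ref{carac2uniq} is fine once such an $X$ is in hand, but as written the proof does not get there.
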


This theorem is the tool which will allow us to prove the following lemma.

\begin{lem}\label{eluncuarto}
 Let $G$ be an abelian group, and let $p\in G^*$ be a strongly summable ultrafilter. If
 \begin{equation*}
  \left\{x\in Q(G)\bigg|\pi_{\rho(x)}(x)\notin\left\{\frac{1}{8},-\frac{1}{8},\frac{3}{8},-\frac{3}{8}\right\}\right\}\in p,
 \end{equation*}
then there exists a set $X\in p$
such that for every sequence $\vec{x}$ of elements of $\bigoplus_{n<\omega}\mathbb T$, if 
$\fs(\vec{x})\subseteq X$ then $\vec{x}$ must satisfy 2-uniqueness of finite sums.
\end{lem}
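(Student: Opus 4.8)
The plan is to reduce the statement to Theorem~\ref{theoremjuris} by pushing $p$ forward along the group homomorphism $\phi:G\longrightarrow\bigoplus_{n<\omega}\mathbb T$ given by $\phi(x)=4x$. Write $S_0$ for the set appearing in the hypothesis, so that $S_0\in p$; let $S=\phi[G]=4G$, a countable subgroup of $\bigoplus_{n<\omega}\mathbb T$; and let $q=\phi(p)$, an ultrafilter on $S$.

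The reason for multiplying by $4$ is the following computation, valid for every $x\in S_0$. For each $n<\rho(x)$ we have $\pi_n(x)\in\{0,\frac14,-\frac14,\frac12\}$, hence $\pi_n(4x)=4\pi_n(x)=0$; while $\pi_{\rho(x)}(x)\notin\{0,\frac14,-\frac14,\frac12\}$, hence $\pi_{\rho(x)}(4x)=4\pi_{\rho(x)}(x)\neq0$. Therefore $\min(4x)=\rho(x)$, and in particular $4x\neq0$. Moreover $4\pi_{\rho(x)}(x)=\frac12$ would force $\pi_{\rho(x)}(x)\in\{\frac18,\frac38,-\frac38,-\frac18\}$, which is impossible because $x\in S_0$; so $\pi_{\min(4x)}(4x)=4\pi_{\rho(x)}(x)\neq\frac12$. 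Consequently $\phi[S_0]\subseteq\{y\in S\big|\pi_{\min(y)}(y)\neq\frac12\}$, and since $S_0\in p$ we conclude that $\{y\in S\big|\pi_{\min(y)}(y)\neq\frac12\}\in q$.

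Next I would verify that $q$ is a nonprincipal strongly summable ultrafilter on $S$. It is idempotent, since strongly summable ultrafilters are idempotent (as recalled in the introduction) and the continuous extension of a homomorphism to $\beta G$ is again a semigroup homomorphism, so $q+q=\phi(p+p)=\phi(p)=q$. It is nonzero, because $4x\neq0$ for all $x\in S_0\in p$ shows $\{x\in G\big|\phi(x)=0\}\notin p$, i.e.\ $\{0\}\notin q$; and a principal ultrafilter on the group $S$ is idempotent only if it is $0$, so $q$ is nonprincipal. For strong summability, given $A\in q$ the set $B=\phi^{-1}[A]\cap S_0$ belongs to $p$, so strong summability of $p$ yields a sequence $\vec x$ with infinite range, $\fs(\vec x)\in p$ and $\fs(\vec x)\subseteq B$; then $\phi[\fs(\vec x)]=\fs(\langle 4x_n\big|n<\omega\rangle)\in q$ and is contained in $A$. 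The remaining — and most delicate — point is to choose $\vec x$ so that $\langle 4x_n\big|n<\omega\rangle$ has infinite range, and \emph{this is the main obstacle of the proof}: a further refinement of $\vec x$ is needed. I would carry it out using the auxiliary fact that if a coordinate projection $\pi_j$ takes values of bounded order on some member of $p$ (i.e.\ $\{x\big|N\pi_j(x)=0\}\in p$ for some $N$), then in fact $\{x\big|\pi_j(x)=0\}\in p$ — for otherwise $\pi_j(p)$ would be a nonprincipal idempotent ultrafilter on a finite group, which cannot exist. Discarding such ``bounded'' coordinates and tracking $\min(4x_n)=\rho(x_n)$, one arranges that the surviving sequence has the $4x_n$ pairwise distinct, which completes the verification that $q$ is strongly summable.

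Finally, $q$ is a nonprincipal strongly summable ultrafilter on the countable semigroup $S\subseteq\bigoplus_{n<\omega}\mathbb T$ with $\{y\in S\big|\pi_{\min(y)}(y)\neq\frac12\}\in q$, so Theorem~\ref{theoremjuris} supplies a set $X_0\in q$ such that every sequence $\vec z$ of elements of $\bigoplus_{n<\omega}\mathbb T$ with $\fs(\vec z)\subseteq X_0$ satisfies $2$-uniqueness of finite sums. I claim $X=\phi^{-1}[X_0]\in p$ is as required. If $\vec y$ is a sequence of elements of $\bigoplus_{n<\omega}\mathbb T$ with $\fs(\vec y)\subseteq X$, then each $y_n\in G$ and $\fs(\langle 4y_n\big|n<\omega\rangle)=\phi[\fs(\vec y)]\subseteq X_0$, so $\langle 4y_n\big|n<\omega\rangle$ satisfies $2$-uniqueness of finite sums; consequently so does $\vec y$, since any relation $\sum_{n\in a}\varepsilon(n)y_n=\sum_{n\in b}\delta(n)y_n$ becomes, after multiplication by $4$, the relation $\sum_{n\in a}\varepsilon(n)(4y_n)=\sum_{n\in b}\delta(n)(4y_n)$, which forces $a=b$ and $\varepsilon=\delta$.
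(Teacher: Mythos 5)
Your proof is correct and follows essentially the same route as the paper: push $p$ forward along $x\longmapsto 4x$, check that the image ultrafilter satisfies the hypothesis of Theorem~\ref{theoremjuris}, and pull the resulting set back, transferring $2$-uniqueness through the homomorphism. The one point you single out as the main obstacle --- that the pushforward of a strongly summable ultrafilter along a homomorphism whose kernel is not in the ultrafilter is again strongly summable, including the infinite-range issue --- is exactly what the paper outsources to \cite[Lemma 4.4]{jurisetal}, so your (somewhat sketchy) direct argument there can simply be replaced by that citation.
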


\begin{proof}
Consider the morphism $\varphi:G\longrightarrow G\subseteq\bigoplus_{n<\omega}\mathbb T$ 
given by $\varphi(x)=4x$, whose kernel is exactly $G\setminus Q(G)$. Since the latter is 
not an element of $p$, then $\varphi(p)$ is a nonprincipal ultrafilter. Moreover, since 
$p$ is strongly summable, so is $\varphi(p)$ by \cite[Lemma 4.4]{jurisetal}. Now notice 
that for $x\in G\setminus\ker(\varphi)=Q(G)$, we have $\rho(x)=\min(\varphi(x))$. Thus 
$\varphi(p)$ contains the set $\{x\in G\setminus\{0\}\big|\pi_{\min(x)}(x)\neq1/2\}$, 
since its preimage under 
$\varphi$ is exactly 
$\left\{x\in Q(G)\bigg|\pi_{\rho(x)}(x)\notin\left\{\frac{1}{8},-\frac{1}{8},\frac{3}{8},-\frac{3}{8}\right\}\right\}$. 
Therefore by Theorem~\ref{theoremjuris}, there is a 
set $Y\in\varphi(p)$ such that whenever $\fs(\vec{y})\subseteq Y$, $\vec{y}$ must 
satisfy 2-uniqueness of finite sums. If we let $X=\varphi^{-1}[Y]$, we claim that $X\in p$ 
is the set that we need. So let $\vec{x}$ be a sequence such that 
$\fs(\vec{x})\subseteq X$. Then letting $\vec{y}$ be the sequence given by 
$y_n=\varphi(x_n)$, since $\varphi$ is a group homomorphism we get that 
$\fs(\vec{y})=\varphi[\fs(\vec{x})]\subseteq\varphi[X]\subseteq Y$, 
thus $\vec{y}$ must satisfy 2-uniqueness of finite sums. 
Again since $\varphi$ is a group homomorphism, it is not hard to see that this implies 
that $\vec{x}$ satisfies 2-uniqueness of finite sums as well, and we are done.
\end{proof}

The following theorem is the last piece needed for proving Theorem~\ref{mainresult}.

\begin{thrm}\label{micontri}
 Let $G$ be an abelian group, and let $p\in G^*$ be a strongly summable ultrafilter. If 
 \begin{equation*}
  \left\{x\in Q(G)\bigg|\pi_{\rho(x)}(x)\in\left\{\frac{1}{8},-\frac{1}{8},\frac{3}{8},-\frac{3}{8}\right\}\right\}\in p,
 \end{equation*}
 then there exists a set $X\in p$
 such that for every sequence $\vec{x}$ of elements of $\bigoplus_{n<\omega}\mathbb T$, if 
 $\fs(\vec{x})\subseteq X$ then $\vec{x}$ must satisfy 2-uniqueness of finite sums.
\end{thrm}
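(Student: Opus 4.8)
The plan is to treat this case by the same strategy as Lemma~\ref{eluncuarto} — push $p$ forward along a multiplication map and invoke Theorem~\ref{theoremjuris} — but using multiplication by $2$ rather than by $4$; this will succeed except in one subcase, which is where the real work lies. As in the text I may assume $G$ is countable and sits inside $\bigoplus_{n<\omega}\mathbb T$. By strong summability, fix $\fs(\vec z)\in p$ contained in the hypothesis set; partitioning this set according to the value of $\pi_{\rho(x)}(x)$ and shrinking to an $\fs$-set in $p$, I may assume there is a fixed $c\in\{\pm\frac18,\pm\frac38\}$ with $\pi_{\rho(x)}(x)=c$ for every $x\in\fs(\vec z)$; the precise value of $c$ will not matter, only that $c$ has order $8$, that $4c=\frac12$, and that $2c\notin\{0,\frac12\}$. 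For $x\in\fs(\vec z)$ put $\mu(x)=\min\{n:\pi_n(x)\notin\{0,\frac12\}\}$; then $\mu(x)\le\rho(x)$, and since every coordinate of $x$ below $\rho(x)$ lies in $\{0,\pm\frac14,\frac12\}$, either $\mu(x)<\rho(x)$ and $\pi_{\mu(x)}(x)\in\{\pm\frac14\}$, or $\mu(x)=\rho(x)$ and $\pi_{\mu(x)}(x)=c$. In either case $\min(2x)=\mu(x)$.

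Let $\varphi(x)=2x$ and $q=\varphi(p)$. Since $\ker\varphi=\{x\in G:o(x)\le2\}$ is disjoint from $Q(G)\supseteq\fs(\vec z)$, the ultrafilter $q$ is nonprincipal, and it is strongly summable by \cite[Lemma 4.4]{jurisetal}. If $S_2=\{x\in\fs(\vec z):\mu(x)=\rho(x)\}$ belongs to $p$, then, replacing $\fs(\vec z)$ by an $\fs$-set in $p$ contained in $S_2$, every $y=2x$ with $x\in\fs(\vec z)$ satisfies $\pi_{\min(y)}(y)=2\pi_{\rho(x)}(x)=2c\neq\frac12$, so $\{y\in\varphi[G]:\pi_{\min(y)}(y)\neq\frac12\}\in q$, and Theorem~\ref{theoremjuris} applied to $q$ gives a set $Y\in q$ such that every $\vec y$ with $\fs(\vec y)\subseteq Y$ satisfies $2$-uniqueness of finite sums. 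Then $X=\varphi^{-1}[Y]\in p$ is as desired: given $\fs(\vec x)\subseteq X$, set $y_n=2x_n$, so $\fs(\vec y)=\varphi[\fs(\vec x)]\subseteq Y$ and $\vec y$ satisfies $2$-uniqueness; applying $\varphi$ to any equality $\sum_{n\in a}\varepsilon(n)x_n=\sum_{n\in b}\delta(n)x_n$ turns it into the corresponding equality among the $y_n$, forcing $a=b$ and $\varepsilon=\delta$, so $\vec x$ satisfies $2$-uniqueness as well. (This is verbatim the argument of Lemma~\ref{eluncuarto} with $4$ replaced by $2$.)

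The remaining, genuinely new, case is $S_1=\{x\in\fs(\vec z):\mu(x)<\rho(x)\}\in p$; here, after one more pigeonhole, I may assume $\pi_{\mu(x)}(x)$ equals a constant value in $\{\pm\frac14\}$ for every $x\in\fs(\vec z)$. Now $2x$ has $\pi_{\min(2x)}(2x)=\frac12$, and $4x$ is ``Boolean-like'' — $\pi_n(4x)\in\{0,\frac12\}$ for $n\le\rho(x)$, with first nonzero value $\frac12$ at $\rho(x)$ — so neither multiplication by $2$ nor by $4$ yields an ultrafilter to which Theorem~\ref{theoremjuris} applies, which is precisely why a new argument is needed here. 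I would handle this subcase by a direct cancellation argument patterned on Lemma~\ref{order4}: first thin $\fs(\vec z)$ further, using strong summability, to a block sequence $\vec w$ whose generators have strictly increasing supports and uniform behaviour at their $\mu$- and $\rho$-coordinates, and take $X$ to be $\fs(\vec w)$ together with the relevant local coordinate constraints, so that $X$ is preserved under passing to $\fs$-subsequences. Then, given $\fs(\vec x)\subseteq X$, and using Proposition~\ref{carac2uniq}, I would assume a relation $2\sum_{n\in a}x_n+\sum_{n\in b}x_n=2\sum_{n\in c}x_n+\sum_{n\in d}x_n$ with $a\cap b=\varnothing=c\cap d$, cancel common summands to reduce to the case $a\cap c=\varnothing=b\cap d$ exactly as in the proof of Lemma~\ref{order4}, and derive a contradiction by evaluating the identity at the coordinates that serve as ``$\rho$'' for the sums involved: the presence of the order-$8$ value $c$ there forces the coefficient-$2$ and coefficient-$1$ parts of the two sides to match, just as the global order-$4$ hypothesis did in Lemma~\ref{order4}. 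The main obstacle is that $\rho$ is not additive, so in a sum $\sum_{n\in a}x_n$ the coordinate playing the role of $\rho$ need not be any of the $\rho(x_n)$; arranging the block structure and the induced ordering of the $\rho$-coordinates carefully enough for this cancellation to close is the crux of the proof.
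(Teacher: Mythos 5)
Your first subcase (the one you call $S_2$, where no coordinate below $\rho(x)$ equals $\pm\frac14$) is handled correctly, and the reduction via $x\mapsto 2x$ to Theorem~\ref{theoremjuris} is a legitimate alternative that the paper does not use. But the theorem is not proved, because your subcase $S_1$ --- which you correctly identify as the one where neither multiplication by $2$ nor by $4$ yields an ultrafilter to which Theorem~\ref{theoremjuris} applies --- is left as a sketch whose central difficulty you name (``$\rho$ is not additive \ldots arranging the block structure \ldots is the crux'') but do not resolve. That unresolved crux is essentially the entire content of the paper's proof. Moreover, the plan you do outline for $S_1$, namely a cancellation argument ``patterned on Lemma~\ref{order4},'' would not close: Lemma~\ref{order4} works by exploiting a global order constraint (every element of the FS-set has order $4$, so a sum of order-$\le 2$ elements is compared against a sum of order-$4$ elements), and there is no analogous order dichotomy available when $\pi_{\rho(x)}(x)=\frac{i}{8}$; evaluating ``at the coordinates that serve as $\rho$'' is precisely what cannot be done naively, since for a sum $x=\sum_{n\in a}x_n$ the first coordinate in $M=\{\rho(x_n)\mid n<\omega\}$ where $x$ is nonzero may carry the value $\frac{2i}{8}=\frac{i}{4}$, which lies back in the ``invisible'' set $\left\{0,\pm\frac14,\frac12\right\}$ and so is not $\rho(x)$.

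What is actually needed (and what the paper supplies, uniformly for the whole case rather than only for $S_1$) is a sequence of structural claims about any $\vec x$ with $\fs(\vec x)\subseteq Q_i=\left\{x\in Q(G)\mid \pi_{\rho(x)}(x)=\frac{i}{8}\right\}$: at most two generators share a given $\rho$-value (Claim~\ref{atmosttwo}); a generator with larger $\rho$ vanishes at the $\rho$-coordinate of a generator with smaller $\rho$ (Claim~\ref{iszeroonpreviousrho}); hence the function $\tau(x)=\min\{j\in M\mid\pi_j(x)\neq 0\}$ is well defined on $\fs(\vec x)$ with $\pi_{\tau(x)}(x)\in\left\{\frac{i}{8},\frac{2i}{8}\right\}$. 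One then uses the ultrafilter once more to pick $k\in\{1,2\}$ with $C_k=\left\{x\in\fs(\vec x)\mid\pi_{\tau(x)}(x)=\frac{ki}{8}\right\}\in p$ and sets $X=C_k$; for an arbitrary $\vec y$ with $\fs(\vec y)\subseteq C_k$ the values $\tau(y_n)$ are forced to be distinct, and reading off $\pi_{\tau(y_{\min a})}$ of a relation $\sum_{n\in a}\varepsilon(n)y_n=\sum_{n\in b}\delta(n)y_n$ recovers $\varepsilon(\min a)$ because $\frac{ki}{8}\neq\frac{2ki}{8}$ and both are nonzero, after which one inducts. Note also that your proposed $X$ (``$\fs(\vec w)$ together with the relevant local coordinate constraints \ldots preserved under passing to $\fs$-subsequences'') would at best control subsequences of $\vec w$, whereas the statement requires controlling \emph{every} sequence whose FS-set lies in $X$; the passage to the cell $C_k\in p$ is what makes that quantifier achievable.
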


\begin{proof}
If $p\in G^*$ is as described in the hypothesis, then there is an $i\in\{1,-1,3,-3\}$
such that 
\begin{equation*}
 Q_i=\left\{x\in Q(G)\bigg|\pi_{\rho(x)}(x)=\frac{i}{8}\right\}\in p.
\end{equation*}
Let $\vec{x}$ be such that $p\ni\fs(\vec{x})\subseteq Q_i$. For 
$j<\omega$ let $M_j=\{n<\omega\big|\rho(x_n)=j\}$.

\pagebreak

\begin{claim}\label{atmosttwo}
For each $j<\omega$, $|M_j|\leq2$.
\end{claim}

\begin{proof}[Proof of Claim]
Assume, by way of contradiction, that there are three distinct $n,m,k\in M_j$, 
and let $x=x_n+x_m+x_k$. For $l<j$, $\pi_l(x)$ must be an element of 
$\left\{0,\frac{1}{4},-\frac{1}{4},\frac{1}{2}\right\}$, because so are 
$\pi_l(x_n),\pi_l(x_m)$ and $\pi_l(x_k)$. On the other hand, 
$\pi_j(x_n)=\pi_j(x_m)=\pi_j(x_k)=\frac{i}{8}$, so $\rho(x)=j$ but 
$\pi_j(x)=\frac{3i}{8}\neq\frac{i}{8}$.
\end{proof}

Thus we can rearrange the sequence $\vec{x}$ in such a way that $n<m$ implies 
$\rho(x_n)\leq\rho(x_m)$, where the inequality is strict if $m>n+1$. Let 
$M=\{\rho(x_n)\big|n<\omega\}$.

\begin{claim}\label{iszeroonpreviousrho}
Let $n<m<\omega$ and assume that $j=\rho(x_n)<\rho(x_m)$ (which may or may 
not hold if $m=n+1$, but must hold if 
$m>n+1$). Then $\pi_j(x_m)=0$.
\end{claim}

\begin{proof}[Proof of Claim]
Let $x=x_n+x_m$. Arguing as in the proof of Claim~\ref{atmosttwo}, we get that $\rho(x)=j$ 
and thus since $x\in Q_i$, $\pi_j(x_n)+\pi_j(x_m)=\pi_j(x)=\frac{i}{8}$. 
Now on the one hand we know that 
$\pi_j(x_m)\in\left\{0,\frac{1}{4},-\frac{1}{4},\frac{1}{2}\right\}$, 
while on the other hand $\pi_j(x_n)=\frac{i}{8}$. Hence the only possibility that does not lead 
to contradiction is that $\pi_j(x_m)=0$.
\end{proof}

\begin{claim}
For every $x\in\fs(\vec{x})$ there is a $j\in M$ such that $\pi_j(x)\neq0$. Moreover for the least such $j$ 
we actually have that $\pi_j(x)\in\left\{\frac{i}{8},\frac{2i}{8}\right\}$.
\end{claim}

\begin{proof}[Proof of Claim]
For if $x=\sum_{n\in a}x_n$ and if $m=\min(a)$, then we can let 
$j=\rho(x_m)\in M$, so that for every $n\in a$ we have 
$\rho(x_n)\geq j$, with a strict inequality if $n>m+1$.
Now, we have that   
\begin{equation*}
 \pi_j(x)=\sum_{n\in a}\pi_j(x),
\end{equation*}
where, by Claim~\ref{iszeroonpreviousrho}, each of the terms 
on the right-hand side of this expression are zero, except 
for $\pi_j(x_m)=\frac{1}{8}$ and possibly $\pi_j(x_{m+1})$
(which will appear on the summation only if $m+1\in a$, and 
if so it will equal $\frac{1}{8}$ if $\rho(x_{m+1})=\rho(x_m)$, 
and zero otherwise). Thus 
$\pi_j(x)\in\left\{\frac{i}{8},\frac{2i}{8}\right\}$. In particular 
$\pi_j(x)\neq0$, now in order to prove the ``moreover'' part, we will 
argue that for all $l<j$ such that $l\in M$, $\pi_l(x)=0$. This is 
because if $l\in M$, then there is $k<\omega$ such that 
$\rho(x_k)=l$, and if $l<j$ then we must necessarily have $k<m$ 
because of the way we arranged our sequence $\vec{x}$. Hence, again 
by Claim~\ref{iszeroonpreviousrho} and since $m=\min(a)$, it will be the case that 
$\pi_l(x_n)=0$ for all $n\in a$, and hence 
\begin{equation*}
 \pi_l(x)=\sum_{n\in a}\pi_l(x_n)=0,
\end{equation*}
therefore $j$ is actually the least $l\in M$ such that $\pi_l(x)\neq0$ 
and we are done.
\end{proof}

The previous claim allows us to define $\tau:\fs(\vec{x})\longmapsto M$ 
by $\tau(x)=\min\{j\in M\big|\pi_j(x)\neq 0\}$, and ensures that 
$\pi_{\tau(x)}(x)\in\left\{\frac{i}{8},\frac{2i}{8}\right\}$. We 
can thus let 
\begin{equation*}
 C_k=\left\{x\in\fs(\vec{x})\bigg|\pi_{\tau(x)}(x)=\frac{ki}{8}\right\}
\end{equation*}
for $k\in\{1,2\}$, and choose from among those the $k$ such that 
$C_k\in p$. We let $X=C_k$ and claim that $X$ is as in the conclusion of the 
theorem. In order to see this, let $\vec{y}$ be such that $\fs(\vec{y})\subseteq C_k$.
 
Notice first that for distinct $n,m<\omega$ we must have 
$\tau(y_n)\neq\tau(y_m)$, for otherwise we would 
get, arguing in a similar way as in the proofs of Claims~\ref{atmosttwo} and~\ref{iszeroonpreviousrho}, 
that $\tau(y_n+y_m)=\tau(y_n)=\tau(y_m)$ and 
$\pi_{\tau(y_n+y_m)}(y_n+y_m)=\frac{2ki}{8}\neq\frac{ki}{8}$, a 
contradiction. Thus by rearranging $\vec{y}$ if necessary, we can assume 
that $n<m$ implies $\tau(y_n)<\tau(y_m)$.

Now an observation is in order. Consider 
$a\in[\omega]^{<\omega}\setminus\varnothing$ and 
$\varepsilon:a\longrightarrow\{1,2\}$. Let $m=\min(a)$ and 
$j=\tau(y_m)$. Since $\tau$ is increasing on $\vec{y}$, 
$\pi_j(y_n)=0$ for all $n\in a\setminus\{m\}$, 
while $\pi_j(y_m)=\frac{ki}{8}$. Thus 
\begin{equation*}
 \pi_j\left(\sum_{n\in a}\varepsilon(n)y_n\right)=\varepsilon(m)\frac{ki}{8}\neq0.
\end{equation*}

From this we can conclude that $\vec{y}$ satisfies 2-uniqueness of 
finite sums. Assume that $a,b\in[\omega]^{<\omega}$ and 
$\varepsilon:a\longrightarrow\{1,2\},\delta:b\longrightarrow\{1,2\}$ 
are such that 
\begin{equation}\label{exprescoef}
 \sum_{n\in a}\varepsilon(n)x_n=\sum_{n\in b}\delta(n)x_n.
\end{equation}
We will proceed by induction on $\min\{|a|,|b|\}$.
If $a=b=\varnothing$ we are done. Otherwise let $m=\min(a\cup b)$. 
Assume without loss of generality that $m\in a$, so that 
$m=\min(a)$. Let $j=\tau(y_m)$. Then by the previous observation, 
the value of each side of (\ref{exprescoef}) under $\pi_j$ is 
nonzero, while $\pi_j(y_n)=0$ for all $n>m$, thus by looking at 
the right-hand side of (\ref{exprescoef}) we conclude that 
we must have $m\in b$ as well. Then it is also the case that 
$\min(b)=m$. Now 
again, by the observation from last paragraph we get that 
the value of each side of (\ref{exprescoef}) under the 
function $\pi_j$ must equal, at the 
same time, $\varepsilon(m)\frac{ki}{8}$ and 
$\delta(m)\frac{ki}{8}$. This can only happen if 
$\varepsilon(m)=\delta(m)$, therefore we can cancel the 
term $\varepsilon(m)y_m$ from both sides of (\ref{exprescoef}) 
and get
\begin{equation*}
 \sum_{n\in a\setminus\{m\}}\varepsilon(n)x_n=\sum_{n\in b\setminus\{m\}}\delta(n)x_n,
\end{equation*}
now we can apply the inductive hypothesis and conclude that 
$a\setminus\{m\}=b\setminus\{m\}$ and 
$\varepsilon\upharpoonright(a\setminus\{m\})=\delta\upharpoonright(b\setminus\{m\})$. 
Since $m$ is an element of both $a$ and $b$, with 
$\varepsilon(m)=\delta(m)$, we have proved that $a=b$ and 
$\varepsilon=\delta$, and we are done.
\end{proof}

\begin{proof}[Proof of Theorem~\ref{mainresult}]
 Let $G$ be an abelian group, and $p\in G^*$ be a strongly summable 
 ultrafilter such that $\{x\in G\big|o(x)=2\}\notin p$. Since 
 $p$ is nonprincipal and the only $x\in G$ with $o(x)=1$ is $0$, 
 we have that $B=\{x\in G\big|o(x)>2\}\in p$. If 
 $C=\{x\in G\big|o(x)=3\}\in p$, then notice that, since  
 $C\subseteq\left\{x\in G\big|\pi_{\min(x)}(x)\neq\frac{1}{2}\right\}$ (because 
 $C=\left\{x\in G\big|(\forall n<\omega)\left(\pi_n(x)\in\left\{0,\frac{1}{3},-\frac{1}{3}\right\}\right)\right\}$), 
 we can apply Theorem~\ref{theoremjuris} and get an $X\in p$ such that, if 
 $\vec{x}$ is such that $\fs(\vec{x})\subseteq X$ (and there is such an $\vec{x}$ with 
 $\fs(\vec{x})\in p$ because of strong summability), then $\vec{x}$ must satisfy 2-uniqueness 
 of finite sums. If 
 $D=\{x\in G\big|o(x)=4\}\in p$, then we can pick a sequence 
 $\vec{x}$ such that $p\ni\fs(\vec{x})\subseteq D$, so by  
 Lemma~\ref{order4} this sequence must satisfy 2-uniqueness 
 of finite sums and we are done. Otherwise, if $C\notin p$ and $D\notin p$,  
 then 
 \begin{equation*}
  Q(G)=\{x\in G\big|o(x)>4\}=(G\setminus D)\cap(G\setminus C)\cap B\in p.
 \end{equation*}
 Now $Q(G)=Q_0\cup Q_1$, where 
 \begin{equation*}
  Q_0=\left\{x\in Q(G)\bigg|\pi_{\rho(x)}(x)\notin\left\{\frac{1}{8},-\frac{1}{8},\frac{3}{8},-\frac{3}{8}\right\}\right\},
 \end{equation*}
 and
 \begin{equation*}
  Q_1=\left\{x\in Q(G)\bigg|\pi_{\rho(x)}(x)\in\left\{\frac{1}{8},-\frac{1}{8},\frac{3}{8},-\frac{3}{8}\right\}\right\},
 \end{equation*}
 so pick $i\in 2$  such that $Q_i\in p$. If $i=0$ apply 
 Lemma~\ref{eluncuarto} and if $i=1$ apply Theorem~\ref{micontri}, 
 in either case, there is an $X\in p$ such that whenever $\vec{x}$ 
 is such that $\fs(\vec{x})\subseteq X$, then $\vec{x}$ must 
 satisfy 2-uniqueness of finite sums. By strong summability of 
 $p$ there is such a sequence $\vec{x}$ which additionally 
 satisfies $\fs(\vec{x})\in p$, and we are done.
\end{proof}


\begin{cor}[\cite{jurisetal}, Question 4.12]
 Let $p$ be a nonprincipal strongly summable ultrafilter on an abelian 
 group $G$. Then $p$ is sparse.
\end{cor}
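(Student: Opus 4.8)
The plan is to split into two cases according to whether the set $\{x\in G\mid o(x)=2\}$ belongs to $p$ or not. If it does not, then Theorem~\ref{mainresult} applies directly: there is a sequence $\vec{x}$ satisfying the 2-uniqueness of finite sums with $\fs(\vec{x})\in p$, and then Corollary~\ref{eldelesparcido} (equivalently, Theorem~\ref{addisomtounion} followed by the theorem stating that additively isomorphic ultrafilters are sparse) immediately yields that $p$ is sparse. So the only remaining case is when $B_2=\{x\in G\mid o(x)=2\}\in p$; since $p$ is nonprincipal, the subgroup of $G$ generated by some $\fs$-set sitting inside $B_2$ is a countable Boolean group lying in $p$, and as explained in the text we may restrict attention to a countable Boolean group $G$ with $p\in G^*$.

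For the Boolean case one cannot hope to produce a sequence with 2-uniqueness of finite sums (no such sequences exist in a Boolean group, as noted after the definition), so a different argument is needed. Here I would simply invoke the already-established result for the Boolean group: the author's \cite[Th. 2.1]{yonilaboriel} states precisely that every nonprincipal strongly summable ultrafilter on the Boolean group is sparse. Combining this with the non-Boolean case handled above via Theorem~\ref{mainresult} and Corollary~\ref{eldelesparcido}, we conclude that in all cases a nonprincipal strongly summable ultrafilter on an abelian group is sparse.

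The main obstacle, conceptually, is already dispatched by the machinery developed earlier: the real content is Theorem~\ref{mainresult}, whose proof occupies Lemma~\ref{order4}, Lemma~\ref{eluncuarto}, and Theorem~\ref{micontri} and handles the orders $3$, $4$, and $>4$ separately (the last of these being subdivided according to the value of $\pi_{\rho(x)}(x)$). Once that theorem is in hand, the corollary is a two-line deduction — identify which of the two cases ($o(x)=2$ in $p$ or not) we are in, apply either the Boolean-group result or the chain Theorem~\ref{mainresult} $\Rightarrow$ Corollary~\ref{eldelesparcido}, and note that sparseness was the desired conclusion in both. No further calculation is required.
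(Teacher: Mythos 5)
Your proposal is correct and follows essentially the same route as the paper: split on whether $\{x\in G\mid o(x)=2\}$ (equivalently, the subgroup $\{x\in G\mid o(x)\leq 2\}$, since $p$ is nonprincipal) lies in $p$, handle the non-Boolean case via Theorem~\ref{mainresult} together with Corollary~\ref{eldelesparcido}, and reduce the Boolean case to \cite[Th.~2.1]{yonilaboriel} by restricting $p$ to a countable Boolean subgroup in $p$. The paper restricts directly to the subgroup $\{x\in G\mid o(x)\leq 2\}$ rather than to the subgroup generated by an FS-set, but this is an immaterial difference.
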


\begin{proof}
 Let $G$ be any abelian group, and let $p\in G^*$ be a 
 strongly summable ultrafilter. Let 
 \begin{equation*}
  B=\{x\in G\big|o(x)\leq2\}.
 \end{equation*}
 Then $B$ is a subgroup of $G$. If $B\in p$ then 
 since $p$ is nonprincipal, $B$ must be infinite; 
 and since $G$ is countable, $B$ must be isomorphic to the 
 (unique up to isomorphism) countably infinite 
 Boolean group. Consider the restricted 
 ultrafilter $q=p\upharpoonright B=p\cap\mathfrak P(B)$. 
 Then $q$ is also strongly summable, so $q$ is 
 a nonprincipal strongly summable ultrafilter 
 on the Boolean group and therefore by 
 \cite[Th. 2.1]{yonilaboriel} it is sparse. It 
 is easy to see that this implies that $p$ is 
 sparse as well. Thus the only case that remains 
 to be proved is when $B\notin p$, but this is 
 handled by Theorem~\ref{mainresult} together with 
 Corollary~\ref{eldelesparcido}, and we are done.
\end{proof}

\begin{cor}[\cite{jurisetal}, Question 4.11]
 Let $p$ be a nonprincipal strongly summable ultrafilter on an abelian group $G$. Then $p$ has the trivial 
 sums property.
\end{cor}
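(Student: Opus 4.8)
The plan is to reduce, exactly as in the proof of the previous corollary, to two cases according to whether the subgroup $B=\{x\in G\mid o(x)\leq 2\}$ lies in $p$. First I would observe that $B$ is a subgroup of $G$, so if $B\in p$ then since $p$ is nonprincipal and $G$ is countable, $B$ is the countably infinite Boolean group, and $q=p\upharpoonright B=p\cap\mathfrak{P}(B)$ is a nonprincipal strongly summable ultrafilter on it. For this case I would invoke Protasov's result \cite[Cor. 4.4]{protasums} (quoted in the introduction), which gives the trivial sums property for nonprincipal strongly summable ultrafilters on the Boolean group, and then note that this transfers up to $p$: if $q,r\in\beta G$ witness $q+r=p$, one checks that $B\in q$ and $B\in r$ (since $B$ is a subgroup containing a set in $p$), so $q\upharpoonright B$ and $r\upharpoonright B$ sum to $p\upharpoonright B$ in $\beta B$, whence they lie in $B+(p\upharpoonright B)\subseteq G+p$.

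For the complementary case $B\notin p$, the set $\{x\in G\mid o(x)=2\}\subseteq B$ is not in $p$ either, so Theorem~\ref{mainresult} applies and furnishes a sequence $\vec{x}$ of elements of $G$ satisfying the 2-uniqueness of finite sums with $\fs(\vec{x})\in p$. Then Theorem~\ref{eldelassumas} (i.e. \cite[Th. 4.8]{jurisetal}) directly yields that $p$ has the trivial sums property, and we are done.

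The only genuine subtlety — and the step I would be most careful about — is the transfer argument in the Boolean case: one must check that $q+r=p$ together with a subgroup $B\in p$ forces $B\in q$ and $B\in r$. This follows because $B\in p$ means $\{x\in G\mid B-x\in r\}\in q$, and for $x$ in this set, $B-x\in r$; since $B$ is a subgroup, $B-x=B$ precisely when $x\in B$, and $B-x$ is a coset disjoint from $B$ otherwise, so for the ultrafilter equation to have any chance of holding with $B\in p$ one is forced into $x\in B$ on a $q$-large set, giving $B\in q$, and then symmetrically (using that the relevant sums can be computed inside the coset structure) $B\in r$. After that, the restriction $\beta B\hookrightarrow\beta G$ respects the semigroup operation on ultrafilters concentrated on the subgroup $B$, so the trivial sums property for $p\upharpoonright B$ inside $\beta B$ delivers it for $p$ inside $\beta G$. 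Everything else is a routine citation. The proof itself would read essentially as a two-line variant of the preceding corollary's proof, with Corollary~\ref{eldelesparcido} replaced by Theorem~\ref{eldelassumas} and \cite[Th. 2.1]{yonilaboriel} replaced by \cite[Cor. 4.4]{protasums}.

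\begin{proof}
 Let $G$ be any abelian group, and let $p\in G^*$ be a strongly summable ultrafilter. Let $B=\{x\in G\mid o(x)\leq2\}$, which is a subgroup of $G$. If $B\in p$ then, since $p$ is nonprincipal and $G$ is countable, $B$ is the countably infinite Boolean group, and $q=p\upharpoonright B=p\cap\mathfrak P(B)$ is a nonprincipal strongly summable ultrafilter on it, so by \cite[Cor. 4.4]{protasums} it has the trivial sums property. Now if $u,v\in\beta G$ satisfy $u+v=p$, then since $B\in p$ is a subgroup, one checks that $B\in u$ and $B\in v$; hence $u\upharpoonright B,v\upharpoonright B\in\beta B$ satisfy $(u\upharpoonright B)+(v\upharpoonright B)=p\upharpoonright B$, so $u\upharpoonright B,v\upharpoonright B\in B+(p\upharpoonright B)\subseteq G+p$, and therefore $u,v\in G+p$. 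If instead $B\notin p$, then in particular $\{x\in G\mid o(x)=2\}\notin p$, so by Theorem~\ref{mainresult} there is a sequence $\vec{x}$ satisfying the 2-uniqueness of finite sums with $\fs(\vec{x})\in p$, and then Theorem~\ref{eldelassumas} shows that $p$ has the trivial sums property, and we are done.
\end{proof}
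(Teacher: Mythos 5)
Your overall decomposition is the paper's: split on whether the subgroup $B=\{x\in G\mid o(x)\leq2\}$ belongs to $p$, handle $B\notin p$ via Theorems~\ref{mainresult} and~\ref{eldelassumas}, and handle $B\in p$ by restricting to the Boolean group and invoking \cite[Cor. 4.4]{protasums}. The second case, however, contains a genuine error: the claim that $u+v=p$ and $B\in p$ force $B\in u$ and $B\in v$ is false whenever $B$ is a proper subgroup of $G$. Concretely, since $p$ is idempotent, for any $g\in G\setminus B$ the ultrafilters $u=g+p$ and $v=-g+p$ satisfy $u+v=p$, yet $B\notin u$ and $B\notin v$, because $B-g$ is a coset of $B$ disjoint from $B$ and $B\in p$. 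Your heuristic argument for the claim breaks exactly here: from $\{x\in G\mid B-x\in v\}\in u$ and the fact that $v$ can contain at most one coset of $B$, you may only conclude that $u$ concentrates on a \emph{single} coset $x_0+B$ and $v$ on the coset $B-x_0$; nothing forces $x_0\in B$. With $B\notin u$ you cannot form $u\upharpoonright B$, so your restriction step collapses.

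The repair is the translation trick the paper uses: pick any $x$ with $B-x\in v$ (the set of such $x$ is in $u$, hence nonempty), and replace $(u,v)$ by $(u-x,v+x)$, which still sum to $p$ since $x\in G$ commutes with all ultrafilters. One then checks that $A=\{y\in G\mid B-y\in v+x\}\in u-x$ and that $A\subseteq B$ (if $B-y\in v+x$ then $B\cap(B-y)\in v+x$ is nonempty, forcing $y\in B$), so $B$ does lie in both translated ultrafilters; restricting to $\beta B$ and applying Protasov's result gives $u-x,\,v+x\in B+(p\upharpoonright B)\subseteq G+p$, hence $u,v\in G+p$, which is all the trivial sums property asks for. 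A secondary point: the paper also takes care to verify that $p\upharpoonright B$ actually meets the hypothesis of \cite[Cor. 4.4]{protasums} (in $B$ the FS-sets are subgroups with $0$ possibly removed, so $p\upharpoonright B$ induces a group topology on $B$); your proof cites the result without this verification, which is a smaller but real omission.
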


\begin{proof}
 Let $G$ be any abelian group, and let $p\in G^*$ be a 
 strongly summable ultrafilter. If $p$ does not contain 
 the subgroup $B=\{x\in G\big|o(x)\leq2\}$, then 
 we just need to apply Theorems~\ref{mainresult} 
 and~\ref{eldelassumas}. So assume that $B\in p$ 
 and let $q,r\in\beta G$ be such that $q+r=p$. 
 Then we have that 
 \begin{equation*}
  \{x\in G\big|B-x\in r\}\in q,
 \end{equation*}
 in particular this set is nonempty and so we 
 can pick an $x\in G$ such that $B-x\in r$, or 
 equivalently $B\in r+x$. Since 
 $x\in G$ (hence it commutes with all ultrafilters), the equation $(q-x)+(r+x)=p$ holds, 
 thus 
 \begin{equation*}
  A=\{y\in G\big|B-y\in r+x\}\in q-x.
 \end{equation*}
 Notice that $A\subseteq B$, because if $y\in G$ 
 is such that $B-y\in r+x$ then 
 $B\cap(B-y)\in r+x$, in particular the latter 
 set is nonempty and so there are $z,w\in B$ 
 such that $z=w-y$ which means that $y=w-z\in B$. 
 Therefore $B\in q-x$, so we can define 
 $u=(q-x)\upharpoonright B$ and 
 $v=(r+x)\upharpoonright B$. We then get that 
 $u,v\in\beta B$ and $p\upharpoonright B\in B^*$ 
 is a strongly summable ultrafilter such that 
 $u+v=p\upharpoonright B$. Notice that in $B$, 
 FS-sets are just subgroups from which the 
 element $0$ might have been removed; thus the 
 filter $\{A\cup\{0\}\big|A\in p\upharpoonright B\}$ 
 has a base of subgroups and hence it is the neighbourhood 
 filter of $0$ for some group topology. This means 
 that $p\upharpoonright B$ satisfies the hypothesis 
 of \cite[Cor. 4.4]{protasums}, so it must be the 
 case that 
 $u,v\in B+p\upharpoonright B$. 
 This is easily seen to 
 imply that $q-x,r+x\in B+p$, and therefore, 
 since $x\in G$, we conclude that $q,r\in G+p$ and we 
 are done.
\end{proof}

\section{The Boolean group}

Theorem~\ref{mainresult} from the previous section depends heavily 
on the hypothesis that the ultrafilter $p$ at hand does not contain the subgroup 
$B(G)=\{x\in G\big|o(x)=2\}$, since 
there are no sequences $\vec{x}$ satisfying the 2-uniqueness of finite sums in 
$B(G)$. Corollary~\ref{nonbooleanimpliesunion} also has that $B(G)\notin p$ as a 
hypothesis, but it is not entirely clear \textit{a priori} that this hypothesis 
is necessary for the result. The main objective of this section is to prove that we 
do in fact need such a hypothesis. This is, if $p\in G^*$ is strongly summable and 
$B(G)\in p$, then there is no guarantee that $p$ is additively isomorphic to a 
union ultrafilter. For this, of course, we only need to consider the case where 
$B(G)$ is infinite (otherwise, the only ultrafilters that can contain it are 
the principal ones). And, as noted in the previous section, when dealing with 
strongly summable ultrafilters we may assume without loss of generality that 
$G$ (and hence $B(G)$) is countable. Since there is (up to isomorphism) only one 
countably infinite 
group all of whose nonidentity elements have order $2$, it will be enough for our 
purposes to look at strongly summable ultrafilters on this group (which we 
will from now on simply call ``the Boolean group''), by focusing our attention on 
the restricted ultrafilter $p\upharpoonright B(G)$.

We will choose a particularly nice ``realization'' of the Boolean group to 
work with. We think of the Boolean group as the set 
$\mathbb B=[\omega]^{<\omega}$ equipped with the 
symmetric difference $\bigtriangleup$ as group operation. Since every element 
of $\mathbb B$ has order 2, we have that for any sequence $\vec{x}$ of elements of 
$\mathbb B$, we can ignore the repeated elements from the sequence and still get 
the same set $\fs(\vec{x})$. Thus we will talk about $\fs(X)$ for $X\subseteq\mathbb B$, 
and it is easy to see that for $p\in\mathbb B^*$, $p$ is strongly summable if and only 
if for every 
$A\in p$ there is an infinite set $X\subseteq\mathbb B$ such that 
$p\ni\fs(X)\subseteq A$.

We will use the fact that $\mathbb B$ is a vector space over the field with two 
elements 
$\mathbb F_2=\mathbb Z/2\mathbb Z$ (scalar multiplication being the obvious one). 
Note that for $X\subseteq\mathbb B$, the subspace spanned (which in $\mathbb B$ 
coincides with the subgroup generated) by $X$ is 
exactly $\fs(X)\cup\{\varnothing\}$, because nontrivial linear combinations 
(i.e. linear combinations in which not all scalars equal zero) of 
elements of $X$ are exactly finite sums (or symmetric differences) of 
elements of $X$. The following proposition, whose proof is obvious, tells us 
how do 
subsets $X\subseteq G$ satisfying uniqueness of finite sums look like.

\begin{prop}\label{uniquenessli}
 For $X\subseteq G$, the following are equivalent:
 \begin{enumerate}[(i)]
  \item\label{uniqueness} $X$ satisfies uniqueness of finite sums.
  \item\label{0notin} $\varnothing\notin\fs(X)$.
  \item\label{linind} $X$ is linearly independent.
 \end{enumerate}
\end{prop}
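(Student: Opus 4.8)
The plan is to prove the three equivalences $(\ref{uniqueness})\Leftrightarrow(\ref{0notin})\Leftrightarrow(\ref{linind})$ by a short cycle of implications, exploiting the special structure of the Boolean group $\mathbb{B}=[\omega]^{<\omega}$ as an $\mathbb{F}_2$-vector space, where every element is its own inverse. The key observation underlying everything is that in $\mathbb{B}$, for a finite set $a\in[\omega]^{<\omega}$ and a sequence $\vec{x}$ of elements of $\mathbb{B}$, the sum $\sum_{n\in a}x_n$ is exactly the symmetric difference $\bigtriangleup_{n\in a}x_n$, and this coincides with the nontrivial $\mathbb{F}_2$-linear combination of the $x_n$ with coefficient $1$ precisely on the indices in $a$. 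Since subtraction equals addition in $\mathbb{B}$, an equation $\sum_{n\in a}x_n=\sum_{n\in b}x_n$ is equivalent to $\sum_{n\in a\bigtriangleup b}x_n=0$.

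First I would show $(\ref{uniqueness})\Rightarrow(\ref{0notin})$: this is already noted in the excerpt immediately after the definition of uniqueness of finite sums (``if $\vec{x}$ satisfies uniqueness of finite sums then $0\notin\fs(\vec{x})$''), and here $0=\varnothing$; alternatively, if $\varnothing\in\fs(X)$ then $\varnothing=\sum_{n\in a}x_n$ for some nonempty $a$, while also $\varnothing=\sum_{n\in\varnothing}x_n$ by the empty-sum convention, so $a=\varnothing$, a contradiction. Next, $(\ref{0notin})\Rightarrow(\ref{linind})$: a nontrivial linear dependence among distinct elements of $X$ is, over $\mathbb{F}_2$, exactly a relation $\sum_{x\in a}x=\varnothing$ for some nonempty finite $a\subseteq X$, which says $\varnothing\in\fs(X)$. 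Finally, $(\ref{linind})\Rightarrow(\ref{uniqueness})$: suppose $\sum_{n\in a}x_n=\sum_{n\in b}x_n$; adding both sides (legitimate in $\mathbb{B}$) gives $\sum_{n\in a\bigtriangleup b}x_n=\varnothing$, and linear independence forces $a\bigtriangleup b=\varnothing$, i.e. $a=b$. (One should take a moment to note that if the listed elements of $\vec{x}$ or $X$ have repetitions this is harmless, since in $\mathbb{B}$ repeated summands cancel in pairs, as already observed in the text; to be safe one works with $X$ as a set of distinct elements.)

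There is essentially no obstacle here: the proposition is flagged in the excerpt as having an obvious proof, and the only thing to be careful about is bookkeeping between the ``sequence'' formulation of uniqueness of finite sums (indexed by $a,b\in[\omega]^{<\omega}$) and the ``set'' formulation in the statement (for $X\subseteq G$), together with the translation of $\mathbb{F}_2$-linear (in)dependence into statements about $\fs(X)$ and $\varnothing$. I would therefore just write the cycle $(\ref{uniqueness})\Rightarrow(\ref{0notin})\Rightarrow(\ref{linind})\Rightarrow(\ref{uniqueness})$ in two or three lines, citing the paragraph preceding the proposition for the identification of the span of $X$ with $\fs(X)\cup\{\varnothing\}$ and the identification of nontrivial linear combinations with finite symmetric differences.

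\begin{proof}
All three statements concern a subset $X\subseteq G=\mathbb{B}$, which we regard as consisting of distinct elements (repetitions are irrelevant, as noted above). Recall from the discussion preceding the proposition that the subspace of $\mathbb{B}$ spanned by $X$ equals $\fs(X)\cup\{\varnothing\}$, and that a nontrivial $\mathbb{F}_2$-linear combination of elements of $X$ is precisely a finite symmetric difference (equivalently, finite sum) of distinct elements of $X$.

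$(\ref{uniqueness})\Rightarrow(\ref{0notin})$: If $\varnothing\in\fs(X)$ then $\varnothing=\sum_{n\in a}x_n$ for some nonempty $a$; but also $\varnothing=\sum_{n\in\varnothing}x_n$, so uniqueness of finite sums would give $a=\varnothing$, a contradiction.

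$(\ref{0notin})\Rightarrow(\ref{linind})$: A nontrivial linear dependence among the elements of $X$ is, over $\mathbb{F}_2$, a relation $\sum_{x\in a}x=\varnothing$ for some nonempty finite $a\subseteq X$, i.e. $\varnothing\in\fs(X)$. Hence if $\varnothing\notin\fs(X)$ then $X$ is linearly independent.

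$(\ref{linind})\Rightarrow(\ref{uniqueness})$: Suppose $\sum_{n\in a}x_n=\sum_{n\in b}x_n$ with $a,b\in[\omega]^{<\omega}$. Since every element of $\mathbb{B}$ is its own inverse, adding both sides yields $\sum_{n\in a\bigtriangleup b}x_n=\varnothing$. By linear independence of $X$ this forces $a\bigtriangleup b=\varnothing$, that is, $a=b$. Thus $X$ satisfies uniqueness of finite sums.
\end{proof}
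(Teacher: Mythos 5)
Your proof is correct and is exactly the ``obvious'' argument the paper has in mind (the paper omits the proof entirely, declaring it obvious): the cycle $(\ref{uniqueness})\Rightarrow(\ref{0notin})\Rightarrow(\ref{linind})\Rightarrow(\ref{uniqueness})$ via the identification of nontrivial $\mathbb F_2$-linear combinations with finite sums and the cancellation $\sum_{n\in a}x_n=\sum_{n\in b}x_n\Leftrightarrow\sum_{n\in a\bigtriangleup b}x_n=\varnothing$. Your bookkeeping remarks about repetitions and the sequence-versus-set formulation are appropriate and match the conventions the paper sets up just before the proposition.
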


Thus when we have a set $\fs(Y)$ such that $Y$ is not linearly independent, we can always choose a 
basis $X$ for the subspace $\fs(Y)$ spanned by $Y$, and we will have that 
$\fs(X)=\fs(Y)\setminus\{\varnothing\}$. This means that, when considering sets of the form 
$\fs(X)$, we can assume without loss of generality that $X$ is linearly independent. Another 
way to see this is the following: let
$p\in B^*$ be a strongly summable ultrafilter, 
and let $A\in p$. Since $p$ is nonprincipal, $\{\varnothing\}\notin p$ and hence 
$A\setminus\{\varnothing\}\in p$. Therefore we can choose an $X$ such that 
$p\ni\fs(X)\subseteq A\setminus\{0\}$, so $\fs(X)\subseteq A$ and $X$ must 
be linearly independent.

\begin{definition}
 For a linearly independent set $X\subseteq\mathbb B$, we define for an element $y\in\fs(X)$ the 
 \textbf{$X$-support} of $y$, denoted by $X-\supp(y)$, as the (unique, by linear independence of $X$) 
 finite set of elements of $X$ whose 
 sum equals $y$. This is,
 \begin{equation*}
  y=\sum_{x\in X-\supp(y)}x.
 \end{equation*}
 If $Y\subseteq\fs(X)$ then, we also define the $X$-support of $Y$ as 
 \begin{equation*}
  X-\supp(Y)=\union_{y\in Y}X-\supp(y).
 \end{equation*}
 Similarly, we define the $X$-support of a sequence of elements of $\fs(X)$ as the $X$-support 
 of its range.
\end{definition}

It will be convenient to stipulate the convention that $X-\supp(\varnothing)=\varnothing$. 
Then it is 
readily checked that the function 
$X-\supp:\fs(X)\cup\{\varnothing\}\longrightarrow([X]^{<\omega},\bigtriangleup)$ is a group 
isomorphism (in fact, a linear transformation between the two vector spaces),
in other words, $X-\supp(x\bigtriangleup y)=X-\supp(x)\bigtriangleup X-\supp(y)$ for 
all $x,y\in\fs(X)$; and more generally 
$X-\supp\left(\sum_{x\in A}x\right)=\sum_{x\in A}X-\supp(x)$ for all $A\in[\fs(X)]^{<\omega}$. 
This is the really crucial feature of the $X$-support, and it will be used 
ubiquitously in what follows.

As an application of the previous definitions and properties, we will provide another proof 
of the fact that every strongly 
summable ultrafilter on $\mathbb B$ is sparse, much simpler than the original one from 
\cite[Th. 2.1]{yonilaboriel}. So let $p\in\mathbb B^*$ be a strongly summable ultrafilter, and 
let $A\in p$. Because of strong summability, there is an infinite linearly 
independent $Z$ such that $p\ni\fs(Z)\subseteq A$.

\begin{claim}\label{existsinfty}
 There is a $B\in p$ such that for some infinite $W\subseteq Z$, $\fs(W)\cap B=\varnothing$.
\end{claim}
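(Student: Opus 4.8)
The plan is to exploit the fact that $\fs(Z)\in p$ and $p$ is nonprincipal, so for any partition of $Z$ into two infinite pieces, the finite sums of the pieces will interact in a controlled way with $p$. More precisely, I would split $Z$ into two infinite linearly independent subsets, say $Z=Z_0\cup Z_1$ with $Z_0\cap Z_1=\varnothing$ and both $Z_i$ infinite. Then every element of $\fs(Z)$ can be written uniquely (by linear independence of $Z$) as $z_0\bigtriangleup z_1$ with $z_i\in\fs(Z_i)\cup\{\varnothing\}$; equivalently, the $Z$-support splits as a disjoint union of its trace on $Z_0$ and its trace on $Z_1$. This gives a map $\fs(Z)\cup\{\varnothing\}\to\fs(Z_0)\cup\{\varnothing\}$ sending $y$ to the element with $Z$-support $Z\text{-}\supp(y)\cap Z_0$; call this $\sigma_0$ and similarly $\sigma_1$ for $Z_1$. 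These are group homomorphisms (they are just the coordinate projections of the isomorphism $Z\text{-}\supp$ composed back).

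Next I would push $p$ forward under $\sigma_0$ and $\sigma_1$. The ultrafilter $\sigma_0(p)$ lives on $\fs(Z_0)\cup\{\varnothing\}$; there are two cases. If $\sigma_0(p)$ is principal, then since its only possible value is the identity $\varnothing$ (because $\sigma_0(p)$ being principal at a nonzero point $w$ would force $p$ to concentrate on the coset $\sigma_0^{-1}(\{w\})$, and one checks this contradicts $\fs(Z)$ having empty $Z_0$-trace being impossible while $\fs(Z_1)$ — actually the cleaner route: if $\sigma_0(p)=\{\varnothing\}$ then $\sigma_0^{-1}(\{\varnothing\})\in p$, and $\sigma_0^{-1}(\{\varnothing\})\cap\fs(Z)=\fs(Z_1)\cup\{\varnothing\}$, so $\fs(Z_1)\in p$), we would be done taking $W=Z_0$ and $B=\fs(Z_1)$, since by linear independence $\fs(Z_0)\cap\fs(Z_1)=\varnothing$. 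Symmetrically if $\sigma_1(p)=\{\varnothing\}$ we take $W=Z_1$, $B=\fs(Z_0)$. The remaining case is that both $\sigma_0(p)$ and $\sigma_1(p)$ are nonprincipal; here I would use that a nonprincipal ultrafilter on an $\fs$-set (or on $[\omega]^{<\omega}$) must "spread out" — concretely, $\sigma_0(p)$ nonprincipal means that for each finite $F\subseteq Z_0$ the set of $y\in\fs(Z_0)$ with $Z_0\text{-}\supp(y)\cap F=\varnothing$ is in $\sigma_0(p)$, since its complement $\{y:Z_0\text{-}\supp(y)\cap F\neq\varnothing\}$ is contained in a finite union of cosets of the finite-index-complement subgroups...

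Actually, the simplest formulation: since $p$ is nonprincipal and $\fs(Z)\in p$, for every finite $F\subseteq Z$ we have $\{y\in\fs(Z):Z\text{-}\supp(y)\cap F=\varnothing\}\in p$ (its complement meets only finitely many of the "atoms" and is a finite union of sets of the form $\{y:x\in Z\text{-}\supp(y)\}$, each of which, together with its translates, shows it cannot be in a nonprincipal $p$ concentrated on $\fs(Z)$ — this is the standard fact that strongly summable ultrafilters, being nonprincipal, contain $\{y\in\fs(Z): n\notin Z\text{-}\supp(y)\}$ for each fixed basis element, by an argument using that the complement is a coset of a proper subgroup). Building on that, I would enumerate $Z=\{z_n:n<\omega\}$ and recursively choose, using nonprincipality, an infinite $W\subseteq Z$ together with a decreasing sequence of sets in $p$ whose $Z$-supports avoid larger and larger initial segments of $W$; a diagonal/Ramsey-type argument (or directly, taking $W=Z_0$ with $B$ a single well-chosen element of $p$ whose $Z$-support avoids $Z_0$) yields the claim.

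The main obstacle I expect is the final case where both projections are nonprincipal: one must produce a single $B\in p$ whose $Z$-support misses an infinite chunk $W$ of $Z$ all at once, not just finite chunks. I would handle this by the standard trick for strongly summable ultrafilters: pick $\vec{x}$ with $p\ni\fs(\vec{x})\subseteq\fs(Z)$; the $Z$-supports $Z\text{-}\supp(x_n)$ are (eventually, after passing to a subsequence) pairwise disjoint nonempty finite subsets of $Z$, so $\bigcup_n Z\text{-}\supp(x_n)$ is co-infinite in $Z$ precisely when infinitely many $z\in Z$ are omitted — and if $\bigcup_n Z\text{-}\supp(x_n)=Z$ one simply passes to the strongly summable restriction generated by $\fs(\langle x_{2n}\mid n<\omega\rangle)$, whose $Z$-support omits all of $Z\text{-}\supp(x_{2n+1})\neq\varnothing$, giving infinitely many omitted basis elements. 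Setting $B=\fs(\langle x_{2n}\mid n<\omega\rangle)\in p$ and $W$ = any infinite subset of $Z\setminus\bigcup_n Z\text{-}\supp(x_{2n})$ finishes, since $\fs(W)\cap B=\varnothing$ by linear independence of $Z$.
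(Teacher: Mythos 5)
The first half of your argument (splitting $Z$ into two infinite pieces $Z_0,Z_1$ and observing that if $\fs(Z_1)\in p$ you may take $W=Z_0$, $B=\fs(Z_1)$, and symmetrically) is fine, but it is exactly the ``remaining case'' that carries the content of the claim, and your treatment of it has two genuine gaps. First, it is not true that a sequence $\vec{x}$ with $\fs(\vec{x})\subseteq\fs(Z)$ has pairwise disjoint $Z$-supports after passing to a subsequence: take $x_n=z_0\bigtriangleup z_{n+1}$; then $\fs(\vec{x})\subseteq\fs(Z)$, yet every $x_n$, hence every infinite subsequence, has $z_0$ in its $Z$-support. (Whether one can at least find \emph{some} generating family with pairwise disjoint supports whose $\fs$-set is in $p$ is essentially the question of additive isomorphism to a union ultrafilter, the very property Theorem~\ref{crazyultrafilter} shows can fail, so no soft argument of this kind is available.) Second, and more seriously, there is no reason that $\fs(\langle x_{2n}\mid n<\omega\rangle)\in p$: the sets $\fs(\langle x_{2n}\rangle)$, $\fs(\langle x_{2n+1}\rangle)$ and the set of mixed sums partition $\fs(\vec{x})$, and $p$ may well concentrate on the mixed sums. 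Asserting that the even-indexed subsequence keeps its $\fs$-set in $p$ is essentially assuming the sparseness conclusion that this claim is a step towards.

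The claim has a two-line proof once you notice that $B$ is not required to be an $\fs$-set. Fix an infinite, coinfinite $Z'\subseteq Z$ and partition $\fs(Z)$ into $B_0=\{w\in\fs(Z)\mid Z-\supp(w)\cap Z'\neq\varnothing\}$ and $B_1=\fs(Z)\setminus B_0=\fs(Z\setminus Z')$. One of the two cells lies in $p$. If $B_0\in p$, then $\fs(Z\setminus Z')=B_1$ is disjoint from $B_0$, so take $W=Z\setminus Z'$ and $B=B_0$; if $B_1\in p$, then every element of $\fs(Z')$ has nonempty $Z$-support contained in $Z'$, hence lies in $B_0$, so $\fs(Z')\cap B_1=\varnothing$ and you take $W=Z'$, $B=B_1$. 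Your projections $\sigma_0,\sigma_1$ already encode this dichotomy; the step you missed is that when neither $\fs(Z_0)$ nor $\fs(Z_1)$ is in $p$, the non-$\fs$ set $\{w\in\fs(Z)\mid Z-\supp(w)\cap Z_0\neq\varnothing\}$ is itself a perfectly good choice of $B$, paired with $W=Z_1$.
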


The result follows easily from the claim: just pick a linearly independent $Y$ such that 
$p\ni\fs(Y)\subseteq B\cap\fs(Z)$, and let $X=Y\cup W$. Then it is straightforward to prove that 
$X$ is linearly independent, since so are $Y$ and $W$, and $\fs(W)$ is disjoint from $\fs(Y)$. 
Since $X\setminus Y=W$ we also have that $|X\setminus Y|=\omega$; and since 
$Y,W\subseteq\fs(Z)$, we 
will have that $\fs(X)\subseteq\fs(Z)\subseteq A$ and we are done.

\begin{proof}[Proof of Claim~\ref{existsinfty}]
 Let $Z'$ be an infinite, coinfinite subset of $Z$. Let
 \begin{equation*}
  B_0=\left\{w\in\fs(Z)\middle|Z-\supp(w)\cap Z'\neq\varnothing\right\},
 \end{equation*}
 \begin{equation*}
  B_1=\fs(Z)\setminus B_0=\left\{w\in\fs(Z)\middle|Z-\supp(w)\cap Z'=\varnothing\right\}.
 \end{equation*}

 There is $i\in2$ such that $B_i\in p$. If $B_0\in p$ then we let $W=Z\setminus Z'$; otherwise if 
 $B_1\in p$ we let $W=Z'$. In any case it is easy to see that $\fs(W)\cap B_i=\varnothing$.
\end{proof}

The rest of this section is devoted to showing that the hypothesis that 
$\{x\in G\big|o(x)=2\}\notin p$ in Corollary~\ref{nonbooleanimpliesunion} is necessary, 
by constructing a nonprincipal strongly summable 
ultrafilter on $\mathbb B$ that is not additively isomorphic to a union ultrafilter. This 
construction borrows lots of ideas from the constructions of unordered union 
ultrafilters that can be found in \cite[Th. 4]{blasshindman} and 
\cite[Cor. 5.2]{krautz-union}. We first show an effective way to look at additive isomorphisms to 
union ultrafilters.

\begin{lem}\label{caracaddisom}
 Let $p\in\mathbb B^*$ be a strongly summable ultrafilter that is additively isomorphic to some 
 union ultrafilter. Then there exists a linearly independent $X$ such that $\fs(X)\in p$ and satisfying  
 that whenever $A\subseteq\fs(X)$ is such that $A\in p$, there exists a set $Z$, whose elements have 
 pairwise disjoint $X$-supports, with $p\ni\fs(Z)\subseteq A$.
\end{lem}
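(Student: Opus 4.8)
The plan is to reduce everything to Proposition~\ref{addisomissimple}. Since $p$ is additively isomorphic to a union ultrafilter, that proposition hands us a sequence $\vec{x}$ of elements of $\mathbb{B}$ satisfying uniqueness of finite sums, with $\fs(\vec{x})\in p$, such that the map $\psi(\sum_{n\in a}x_n)=a$ sends $p$ to a union ultrafilter $q$. By Proposition~\ref{uniquenessli}, uniqueness of finite sums means the range $X=\{x_n\big|n<\omega\}$ is linearly independent (in particular the sequence is injective, so $n\longmapsto x_n$ is a bijection between $\omega$ and $X$), and $\fs(X)\in p$. Under this bijection, $\psi$ is nothing but the $X$-support map $\fs(X)\longrightarrow[\omega]^{<\omega}\setminus\{\varnothing\}$, and by linear independence it is a bijection; the crucial feature we will use is that it is a linear isomorphism, so it turns sums (symmetric differences) of elements of $\fs(X)$ into symmetric differences of the corresponding finite subsets of $\omega$.

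Next I would take any $A\subseteq\fs(X)$ with $A\in p$ and show that $\psi[A]\in q$. Because $\psi$ is injective on $\fs(X)$, we have $\psi^{-1}[\psi[A]]=A\in p$, and $q=\psi(p)$ by definition gives $\psi[A]\in q$. Also $\psi[A]\subseteq[\omega]^{<\omega}\setminus\{\varnothing\}$ since $X$ linearly independent forces $\varnothing\notin\fs(X)$.

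Now I would invoke that $q$ is a union ultrafilter: there is a pairwise disjoint family $W\subseteq[\omega]^{<\omega}$ with $q\ni\fu(W)\subseteq\psi[A]$. Define $Z=\psi^{-1}[W]=\{z_w\big|w\in W\}$ where $z_w=\sum_{n\in w}x_n$. Then $X\text{-}\supp(z_w)=\{x_n\big|n\in w\}$, and since $W$ is pairwise disjoint and $n\longmapsto x_n$ is injective, the elements of $Z$ have pairwise disjoint $X$-supports. The key computation to check is that $\psi[\fs(Z)]=\fu(W)$: for $a\in[W]^{<\omega}\setminus\{\varnothing\}$, since $\psi$ is a linear isomorphism and the $w\in a$ are pairwise disjoint, $\psi\big(\sum_{w\in a}z_w\big)=\bigtriangleup_{w\in a}w=\union_{w\in a}w\in\fu(W)$, and conversely every element of $\fu(W)$ is obtained this way. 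Hence $\fs(Z)=\psi^{-1}[\fu(W)]$. Therefore $\fs(Z)=\psi^{-1}[\fu(W)]\subseteq\psi^{-1}[\psi[A]]=A$, and $\fs(Z)=\psi^{-1}[\fu(W)]\in p$ because $\fu(W)\in q=\psi(p)$, which is exactly what we wanted.

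There is no serious obstacle here; the only point that deserves care is verifying that the $X$-support map (i.e.\ $\psi$) is a bijection of $\fs(X)$ onto $[\omega]^{<\omega}\setminus\{\varnothing\}$ that intertwines $\bigtriangleup$ on $\fs(X)$ with $\bigtriangleup$ on $[\omega]^{<\omega}$, so that pairwise disjoint unions on the $\fu$ side correspond precisely to $\fs$-sets generated by elements with pairwise disjoint $X$-supports — all of which follows from linear independence of $X$ and the remarks following the definition of $X$-support.
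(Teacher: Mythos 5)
Your proof is correct and follows essentially the same route as the paper's: both reduce to Proposition~\ref{addisomissimple}, identify the map $\sum_{n\in a}x_n\longmapsto a$ as a linear isomorphism via Proposition~\ref{uniquenessli}, and pull back a pairwise disjoint family witnessing the union-ultrafilter property to obtain $Z$ with pairwise disjoint $X$-supports. The only difference is that you spell out the verification $\psi[\fs(Z)]=\fu(W)$ in more detail than the paper does.
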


\begin{proof}
 If the strongly summable ultrafilter $p\in\mathbb B^*$ is additively 
 isomorphic to a union ultrafilter, by Propositions~\ref{addisomissimple} 
 and~\ref{uniquenessli}, we 
 have that for 
 some linearly independent $X$ such that $\fs(X)\in p$ and for some enumeration of $X$ as 
 $X=\{x_n\big|n<\omega\}$, the mapping $\varphi:\fs(X)\longrightarrow[\omega]^{<\omega}$ 
 given by 
 $\sum_{n\in a}x_n\longmapsto a$ sends $p$ to a union ultrafilter. Note that the mapping 
 $\varphi$ is 
 a vector space isomorphism from the subspace spanned by $X$, to all of $\mathbb B$ (in fact 
 it is 
 the unique linear extension of the mapping $x_n\longmapsto\{n\}$). The fact that 
 $\varphi(p)$ is a union 
 ultrafilter 
 means that, for every $A\subseteq\fs(X)$ such that $A\in p$, there is a pairwise disjoint family $Y$ 
 such that $\varphi(p)\ni\fu(Y)\subseteq\varphi[A]$. Since $Y$ is pairwise disjoint, we 
 get that $\fu(Y)=\fs(Y)$ and since $\varphi$ is an isomorphism, $\varphi^{-1}[\fs(Y)]=\fs(Z)$ 
 where $Z=\varphi^{-1}[Y]$. Now the fact that $Y$ is 
 pairwise disjoint means that the $X$-supports of the elements of $Z$ are pairwise disjoint, and 
 we have that $p\ni\fs(Z)\subseteq A$.
\end{proof}

Thus our goal is to construct, by a transfinite recursion, a strongly summable ultrafilter and 
somehow, at the same time, for each linearly independent $X$ such that $\fs(X)$ will end up 
in the ultrafilter, at some stage we need to start making sure that, for every new set of 
the form $\fs(Z)$ that we are adding to the ultrafilter, the generators $Z$ do not have 
pairwise disjoint $X$-support. The notions of suitable and adequate families for $X$ will 
precisely code the way in which we are going to ensure that.

\begin{definition}\label{defsuitability}
 For a linearly independent subset $X\subseteq G$, we will say that a subset $Y\subseteq\fs(X)$ 
 is \textbf{suitable} for $X$ if:
 \begin{enumerate}[(i)]
  \item\label{suitawitness} For each $m<\omega$ there exists an $m$-sequence 
  $\langle y_i\big|i<m\rangle$ of elements of 
  $Y$ such that whenever $i<j<m$, the set $X-\supp(y_i)\cap X-\supp(y_j)$ is nonempty. This 
  sequence will be called an \textbf{$m$-witness for suitability}.
  \item\label{elcachito} Whenever $y,y'\in Y$ are such that $X-\supp(y)\cap X-\supp(y')$ 
  is nonempty, the set
  $[X-\supp(y)\cap X-\supp(y')]\setminus X-\supp(Y\setminus\{y,y'\})$ is also nonempty. 
  (We do not require here that $y\neq y'$; in particular, for each $y\in Y$, 
  $X-\supp(y)\setminus X-\supp(Y\setminus\{y\})$ is nonempty, and 
 this is easily seen to imply that $Y$ must be linearly independent).
 \end{enumerate}
\end{definition}

Thus a suitable set $Y$ for $X$ contains, in a carefully controlled way, arbitrarily large 
bunches of elements whose $X$-supports always pairwise intersect. Given a linearly 
independent set 
$X$, it is easy to inductively build a set $Y$ that is suitable for $X$. And once we have 
such a suitable set, we can look at subsets of $\fs(Y)$ which, in a sense, borrow 
from $Y$ the non-disjointness of their $X$-supports. This is captured in a precise sense 
by the following definition, which also captures the fact that we will want to handle the 
non-disjointness of the $X$-supports for several distinct 
linearly independent sets $X$ simultaneously.

\begin{definition}\label{defadequacy}
 Let $A\subseteq\mathbb B$ and let $\mathscr Y=\{(X_i,Y_i)\big|i<n\}$ be a finite family 
 such that for each $i<n$, $X_i$ is a linearly independent subset of $G$ and $Y_i$ is 
 suitable for $X_i$. Also, let $m<\omega$. Then we will say that $A$ is 
 \textbf{$(\mathscr Y,m)$-adequate} if there exists an $m$-sequence 
 $\langle a_j\big|j<m\rangle$, called a 
 \textbf{$(\mathscr Y,m)$-witness for adequacy}, such that for each $i<n$, 
 \begin{enumerate}[(i)]
  \item\label{adequafsin} $\fs(\vec{a})\subseteq A\cap\fs(Y_i)$ (which is in turn a subset 
  of $\fs(X_i)$),
  \item\label{adequawitness} There exists an $m$-witness for the suitability of $Y_i$, 
  $\langle y_j\big|j<m\rangle$, such that for each two distinct $j,k<m$, $y_j\in Y_i-\supp(a_j)$ and 
  $y_j\notin Y_i-\supp(a_k)$.
 \end{enumerate}
 If we are given a family of ordered pairs $\mathscr X$ all of whose first entries 
 are linearly independent subsets of $\mathbb B$, while every second entry is 
 suitable for the corresponding first entry, then we will say that $A$ is 
 $\mathscr X$-adequate if it is $(\mathscr Y,m)$-adequate for all finite 
 $\mathscr Y\subseteq\mathscr X$ and for all $m<\omega$. When $\mathscr Y$ is a 
 singleton $\{(X,Y)\}$, we will just say that $A$ is $(X,Y)$-adequate.
\end{definition}

Requirement~(\ref{adequawitness}) of Definition~\ref{defadequacy} in particular 
implies that, for $j<k<m$, the set $X_i-\supp(a_j)\cap X_i-\supp(a_k)$ is 
nonempty. Thus the $X_i$-supports of the terms of a witness for adequacy are 
not pairwise disjoint, and moreover their non-disjointness does not happen 
randomly, but is rather induced by some non-disjointness going on at the level 
of $Y_i$. Also, note that if $Y$ is suitable for $X$ then $\fs(Y)$ is 
$(X,Y)$-adequate, with the witnesses for suitability witnessing adequacy at 
the same time. The following lemma, along with the observation that an 
$\mathscr X$-adequate set is also $(X,Y)$-adequate for each 
$(X,Y)\in\mathscr X$, tells us that this notion of adequacy is 
adequate (pun intended) for our purpose of banishing sets of the form 
$\fs(Z)$ for which the elements of $Z$ have pairwise disjoint 
$X$-supports.

\begin{lem}\label{matedisjuntez}
Let $X$ and $Z$ be both linearly independent and let $Y$ be suitable for $X$. 
Assume that $Z\subseteq\fs(Y)$. If the elements of $Z$ have pairwise disjoint 
$X$-supports then $\fs(Z)$ is not $(X,Y)$-adequate.
\end{lem}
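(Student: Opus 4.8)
The plan is to argue by contradiction, and it is enough to contradict just $(\{(X,Y)\},2)$-adequacy of $\fs(Z)$. So suppose we are given $a_0,a_1\in\mathbb B$ with $\fs(\langle a_0,a_1\rangle)\subseteq\fs(Z)\cap\fs(Y)$ and a $2$-witness $\langle y_0,y_1\rangle$ for the suitability of $Y$ (so $X-\supp(y_0)\cap X-\supp(y_1)\neq\varnothing$) such that $y_j\in Y-\supp(a_j)$ and $y_j\notin Y-\supp(a_{1-j})$ for $j<2$; note $y_0\neq y_1$, since otherwise $y_0=y_1$ would both lie in $Y-\supp(a_0)$ and fail to lie in it. Write $a_j=\sum_{z\in S_j}z$, where $S_j\subseteq Z$ is the (unique, by linear independence of $Z$) $Z$-support of $a_j$.

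The first step is to observe that the sets $Y-\supp(z)$, for $z\in Z$, are pairwise disjoint. Indeed, clause~(\ref{elcachito}) of Definition~\ref{defsuitability}, applied with $y=y'$, furnishes for each $y\in Y$ a \emph{private} element $p(y)\in X-\supp(y)\setminus X-\supp(Y\setminus\{y\})$. Since the support maps are linear, for any $w\in\fs(Y)$ the set $X-\supp(w)$ is the symmetric difference of the sets $X-\supp(y)$ over $y\in Y-\supp(w)$; hence $p(y)$, which belongs to $X-\supp(y)$ and to no other $X-\supp(y')$ with $y'\in Y$, lies in $X-\supp(w)$ whenever $y\in Y-\supp(w)$. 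Therefore, if some $y\in Y$ belonged to $Y-\supp(z)\cap Y-\supp(z')$ for distinct $z,z'\in Z$, then $p(y)$ would lie in $X-\supp(z)\cap X-\supp(z')$, contradicting the assumption that the elements of $Z$ have pairwise disjoint $X$-supports. Consequently $Y-\supp(a_j)=\union_{z\in S_j}Y-\supp(z)$ is a disjoint union, so $y_j$ lies in $Y-\supp(z_j)$ for a \emph{unique} $z_j\in S_j$; moreover, by the disjointness just established, $z_j$ is the unique element of all of $Z$ whose $Y$-support contains $y_j$. The hypothesis $y_j\notin Y-\supp(a_{1-j})$ then forces $z_j\notin S_{1-j}$, and in particular $z_0\neq z_1$.

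For the final contradiction, apply clause~(\ref{elcachito}) of Definition~\ref{defsuitability} to the pair of distinct elements $y_0,y_1$ (whose $X$-supports meet) to obtain a \emph{semi-private} element $q\in X-\supp(y_0)\cap X-\supp(y_1)\setminus X-\supp(Y\setminus\{y_0,y_1\})$. Fix $j<2$: the set $X-\supp(z_j)$ is the symmetric difference of the sets $X-\supp(y)$ over $y\in Y-\supp(z_j)$, and the only $y\in Y$ with $q\in X-\supp(y)$ are $y_0$ and $y_1$, of which exactly $y_j$ lies in $Y-\supp(z_j)$ (because $z_0\neq z_1$ and $z_j$ is the unique element of $Z$ whose $Y$-support contains $y_j$). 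So $q$ appears in precisely one of the summands, whence $q\in X-\supp(z_j)$. Therefore $q\in X-\supp(z_0)\cap X-\supp(z_1)$ with $z_0\neq z_1$ in $Z$, once more contradicting the pairwise disjointness of the $X$-supports of the elements of $Z$. This shows $\fs(Z)$ is not $(\{(X,Y)\},2)$-adequate, hence not $(X,Y)$-adequate.

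The only real obstacle is bookkeeping: keeping the two layers of support (the $X$-support and the $Y$-support) straight, and using crucially that the private and semi-private elements provided by clause~(\ref{elcachito}) lie in the $X$-support of no \emph{other} element of $Y$, so that membership of such an element in $X-\supp(z)$ for $z\in\fs(Y)$ is decided purely by a parity count over $Y-\supp(z)$. The disjointness-of-$X$-supports hypothesis on $Z$ is used twice — first (via the private elements) to force the $Y$-supports of the elements of $Z$ to be pairwise disjoint, and then (via the semi-private element $q$) to close the argument; clause~(\ref{suitawitness}) of suitability and adequacy for $m>2$ are not needed.
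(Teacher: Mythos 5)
Your proof is correct and follows essentially the same route as the paper's: reduce to defeating a $2$-witness, use clause~(ii) of suitability with $y=y'$ to see that distinct elements of $Z$ have pairwise disjoint $Y$-supports, and then use it with $y_0\neq y_1$ to produce an element of $X-\supp(z_0)\cap X-\supp(z_1)$. If anything, your write-up is more careful than the paper's terse version about why the private and semi-private elements survive the symmetric-difference (parity) computation of the $X$-supports.
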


\begin{proof}
Clause~(\ref{elcachito}) from Definition~\ref{defsuitability} implies that, 
for two distinct $z,z'\in Z$, if $y\in Y-\supp(z)$ and $y'\in Y-\supp(z')$ 
then $X-\supp(y)\cap X-\supp(y')=\varnothing$, for otherwise 
$X-\supp(z)$ would not be disjoint from $X-\supp(z')$. Thus 
$\langle z,z'\rangle$ cannot be an $((X,Y),2)$-witness. More generally, 
for any two $w,w'\in\fs(Z)$, the only way that there could exist two 
distinct $y\in Y-\supp(w)$ and $y'\in Y-\supp(w')$ such that 
$X-\supp(y)\cap X-\supp(y')\neq\varnothing$ would be if 
$y,y'\in Y-\supp(z)$ for some $z\in Z$ such that 
$z\in Z-\supp(w)\cap Z-\supp(w')$. But then 
$y\in Y-\supp(w')$ and $y'\in Y-\supp(w)$. Hence 
$\langle w,w'\rangle$ cannot be an $((X,Y),2)$-witness and we are done.
\end{proof}

Given this, the idea for the recursive construction of an ultrafilter 
would be as follows: at each stage we choose some set $\fs(X)$ 
that has already been added to the ultrafilter, and then we 
choose a suitable (for $X$) set $Y$. At every stage we make sure that the 
subsets of $\mathbb B$ that we are adding to the ultrafilter 
are $\mathscr X$-adequate, where $\mathscr X$ is the collection 
of all pairs $(X,Y)$ that have been thus chosen so far. If we want 
to have a hope of succeeding in such a construction, we better 
make sure that the notion of being $\mathscr X$-adequate 
behaves well with respect to partitions. For this we will 
need the following lemma.

\begin{lem}\label{condensatestigos}
Let $\mathscr Y=\{(X_i,Y_i)\big|i<n\}$ where each $X_i$ is 
linearly independent and each $Y_i$ is suitable for $X_i$. 
Let $\vec{a}=\langle a_j\big|j<M\rangle$ be a 
$(\mathscr Y,M)$-witness for adequacy, and let 
$\langle b_i\big|i<m\rangle$ be an $m$-sequence of pairwise 
disjoint subsets of $M$. If we define $\vec{c}=\langle c_j\big|j<m\rangle$ by 
$c_j=\sum_{k\in b_j}a_k$, then $\vec{c}$ will be a 
$(\mathscr Y,m)$-witness for adequacy.
\end{lem}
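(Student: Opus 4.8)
The plan is to check directly that $\vec c=\langle c_j\mid j<m\rangle$ satisfies the two clauses of Definition~\ref{defadequacy}, exploiting two features of the construction: each $c_j$ is the sum of the $a_k$ over the index block $b_j$, and the blocks $\langle b_j\mid j<m\rangle$ are pairwise disjoint. The disjointness is what makes everything go through, and I may clearly assume each $b_j\neq\varnothing$ (otherwise $c_j=\varnothing$, and no sequence containing $\varnothing$ can be an adequacy witness).

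Clause~(\ref{adequafsin}) for $\vec c$ is immediate from that for $\vec a$. Since the blocks are pairwise disjoint, for any nonempty $e\subseteq m$ the sum $\sum_{j\in e}c_j$ collapses to $\sum_{k\in\union_{j\in e}b_j}a_k$, and $\union_{j\in e}b_j$ is a nonempty finite subset of $M$; hence $\fs(\vec c)\subseteq\fs(\vec a)$, and by clause~(\ref{adequafsin}) for $\vec a$ this is contained in $A\cap\fs(Y_i)$ for every $i<n$ (writing $A$ for the set whose $(\mathscr Y,M)$-adequacy $\vec a$ witnesses). In particular each $c_j\in\fs(Y_i)$, so $Y_i-\supp(c_j)$ is defined.

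For clause~(\ref{adequawitness}), fix $i<n$ and let $\langle y_l\mid l<M\rangle$ be the $M$-witness for the suitability of $Y_i$ supplied by clause~(\ref{adequawitness}) for $\vec a$, so $y_l\in Y_i-\supp(a_l)$ but $y_l\notin Y_i-\supp(a_{l'})$ for all $l'\neq l$ (this forces the $y_l$ to be pairwise distinct, and of course the $X_i$-supports of any sub-selection of them still pairwise meet). Choose $l_j\in b_j$ for each $j<m$; the $l_j$ are distinct because the blocks are pairwise disjoint. I claim $\langle y_{l_j}\mid j<m\rangle$ is the $m$-witness for the suitability of $Y_i$ required at index level $m$: it is an $m$-sequence of elements of $Y_i$ whose $X_i$-supports pairwise meet, by the previous remark. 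For the support conditions, recall that $y\mapsto Y_i-\supp(y)$ is a linear map, so $Y_i-\supp(c_j)$ is the symmetric difference $\bigtriangleup_{l\in b_j}Y_i-\supp(a_l)$; since $y_{l_j}$ lies in $Y_i-\supp(a_l)$ for exactly one $l\in b_j$, namely $l=l_j$, it lies in this symmetric difference, i.e.\ $y_{l_j}\in Y_i-\supp(c_j)$; and for $k\neq j$ we have $l_j\notin b_k$, so $y_{l_j}$ lies in none of the sets $Y_i-\supp(a_l)$ with $l\in b_k$, hence $y_{l_j}\notin Y_i-\supp(c_k)$. This is exactly clause~(\ref{adequawitness}) for $\vec c$, finishing the proof.

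The only delicate point is the last step: because $Y_i-\supp(c_j)$ is a \emph{symmetric difference} (not a union) of the sets $Y_i-\supp(a_l)$, membership of the marked generator $y_{l_j}$ is governed by a parity count, and pairwise disjointness of $\langle b_j\mid j<m\rangle$ is precisely what guarantees that $y_{l_j}$ is counted once in block $j$ and zero times in every other block; everything else is routine bookkeeping. As a byproduct the argument also shows the $c_j$ are pairwise distinct and nonzero, which is reassuring though not part of the statement.
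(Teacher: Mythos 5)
Your proof is correct and follows essentially the same route as the paper's: clause~(\ref{adequafsin}) via $\fs(\vec c)\subseteq\fs(\vec a)$ from disjointness of the blocks, and clause~(\ref{adequawitness}) by selecting one index $l_j\in b_j$ per block and reusing the corresponding terms $y_{l_j}$ of the $M$-witness for suitability. The only difference is that you spell out the parity argument (the $Y_i$-support of $c_j$ is the symmetric difference of the supports of the $a_l$, $l\in b_j$) where the paper simply asserts the membership claims, and you explicitly flag the implicit assumption that each $b_j$ is nonempty; both are harmless elaborations.
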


\begin{proof}
 Let us check that $\vec{c}$ satisfies both requirements of 
 Definition~\ref{defadequacy} for a 
 $(\mathscr Y,m)$-witness. Fix $i<n$. Since the $b_j$ are pairwise 
 disjoint, we have that 
 $\fs(\vec{c})\subseteq\fs(\vec{a})\subseteq A\cap\fs(Y_i)$, thus 
 requirement~(\ref{adequafsin}) is satisfied. In order to see that 
 requirement~(\ref{adequawitness}) holds, grab the corresponding $m$-witness for suitability, 
 $\langle y_j\big|j<M\rangle$, as in part~(\ref{adequawitness}) of Definition~\ref{defadequacy} 
 for $\vec{a}$. Now for $j<m$, pick a 
 $k_j\in b_j$ and let $w_j=y_{k_j}$. Since the $w_j$ were chosen 
 from among the $y_k$, the sequence $\vec{w}=\langle w_j\big|j<m\rangle$ is an $m$-witness 
 for suitability. 
 Now for $j<m$, since $w_j\in Y_i-\supp(a_{k_j})$ and 
 $w_j\notin Y_i-\supp(a_l)$ for $l\neq k_j$, it follows that 
 $w_j\in Y_i-\supp(c_j)$ and 
 $w_j\notin Y_i-\supp(c_{j'})$ for $j\neq j'$, and we are done.
\end{proof}

An easy consequence of the previous lemma is the observation 
that any $(\mathscr Y,M)$-adequate set is also 
$(\mathscr Y,m)$-adequate for any $m\leq M$. 
Lemma~\ref{condensatestigos} will allow us to prove the 
following lemma, which is crucial.

\begin{lem}\label{partitadequatefinitary}
For each $m<\omega$ there is an $M<\omega$ such that whenever $\mathscr Y$ is a finite 
family of 
ordered pairs of the 
form $(X,Y)$, with $X$ a linearly independent set and $Y$ suitable for $X$, and whenever a 
$(\mathscr Y,M)$-adequate set is partitioned into two cells, one of 
the cells must be $(\mathscr Y,m)$-adequate.
\end{lem}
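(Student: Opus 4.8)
The plan is to deduce this from the finitary version of Hindman's theorem for finite unions, with Lemma~\ref{condensatestigos} doing essentially all of the remaining work. Fix $m<\omega$. First I would let $M=M(m)$ be an integer large enough that every $2$-colouring of $[M]^{<\omega}\setminus\{\varnothing\}$ admits a pairwise disjoint family $\{b_j\mid j<m\}\subseteq[M]^{<\omega}\setminus\{\varnothing\}$ for which $\fu(\{b_j\mid j<m\})$ is monochromatic. Such an $M$ exists, uniformly in $m$: if no finite $M$ worked, a König's lemma (diagonal) argument applied to a sequence of ``bad'' colourings of the spaces $[M]^{<\omega}$, $M<\omega$, would produce a single $2$-colouring of $[\omega]^{<\omega}\setminus\{\varnothing\}$ admitting no infinite pairwise disjoint $X$ with $\fu(X)$ monochromatic, contradicting the finite-unions formulation of Hindman's theorem (see \cite{hindmanstrauss}). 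The crucial point is that $M$ depends only on $m$, and not on the family $\mathscr Y$ nor on the set being partitioned; this is exactly what makes the quantifier structure of the lemma come out right.

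Now suppose $\mathscr Y=\{(X_i,Y_i)\mid i<n\}$ is as in the statement, $A$ is $(\mathscr Y,M)$-adequate with $(\mathscr Y,M)$-witness $\vec{a}=\langle a_j\mid j<M\rangle$, and $A=A_0\cup A_1$. Since $\fs(\vec{a})\subseteq A$, every nonempty $d\in[M]^{<\omega}$ receives a colour $\psi(d)\in\{0,1\}$, namely the unique $\varepsilon$ with $\sum_{k\in d}a_k\in A_\varepsilon$. By the choice of $M$ there are pairwise disjoint nonempty $b_0,\dots,b_{m-1}\subseteq M$ and a single $\varepsilon\in\{0,1\}$ such that $\sum_{k\in d}a_k\in A_\varepsilon$ whenever $d=\union_{j\in F}b_j$ for some nonempty $F\subseteq m$. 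Set $c_j=\sum_{k\in b_j}a_k$ and $\vec{c}=\langle c_j\mid j<m\rangle$. Because the $b_j$ are pairwise disjoint, $\sum_{j\in F}c_j=\sum_{k\in\union_{j\in F}b_j}a_k$ for every nonempty $F\subseteq m$, so $\fs(\vec{c})\subseteq\fs(\vec{a})$ and moreover $\fs(\vec{c})\subseteq A_\varepsilon$.

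It then remains to check that $\vec{c}$ witnesses the $(\mathscr Y,m)$-adequacy of $A_\varepsilon$. By Lemma~\ref{condensatestigos}, $\vec{c}$ is a $(\mathscr Y,m)$-witness for the adequacy of $A$; in particular, for each $i<n$ we have $\fs(\vec{c})\subseteq\fs(\vec{a})\subseteq A\cap\fs(Y_i)$, hence $\fs(\vec{c})\subseteq\fs(Y_i)$, and combined with $\fs(\vec{c})\subseteq A_\varepsilon$ this gives requirement~(\ref{adequafsin}) of Definition~\ref{defadequacy} with $A_\varepsilon$ in place of $A$. Requirement~(\ref{adequawitness}) for $\vec{c}$ is precisely what Lemma~\ref{condensatestigos} delivers, and that requirement makes no reference to the ambient set at all. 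Hence $A_\varepsilon$ is $(\mathscr Y,m)$-adequate, which is what we wanted. The only genuinely nontrivial ingredient is the uniform finitary finite-unions partition theorem from the first paragraph (proving it, rather than citing it, is the main obstacle, and it goes through the compactness reduction to the infinitary Hindman theorem indicated above); everything else is bookkeeping with Lemma~\ref{condensatestigos}, and the single point requiring attention is to fix $M$ before $\mathscr Y$ and $A$, which is legitimate exactly because the finite-unions Ramsey number carries no group-theoretic data.
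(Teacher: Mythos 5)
Your proof is correct and follows essentially the same route as the paper: both obtain the finitary finite-unions partition theorem (Folkman--Rado--Saunders) by a compactness argument from Hindman's theorem, transfer the partition of $A$ to a $2$-colouring of $\mathfrak P(M)\setminus\{\varnothing\}$ via the witness $\vec{a}$, and then invoke Lemma~\ref{condensatestigos} to see that the condensed sequence $\vec{c}$ witnesses $(\mathscr Y,m)$-adequacy of the monochromatic cell. Your explicit remark that $M$ depends only on $m$ and carries no group-theoretic data is a correct and worthwhile clarification of the quantifier structure, but the argument itself is the paper's.
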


\begin{proof}
For this, we will use a theorem of Graham and Rothschild 
which is a finitary 
version of Hindman's theorem, namely: for every $m<\omega$ 
there is an $M<\omega$ such that whenever we partition 
$\mathfrak P(M)\setminus\{\varnothing\}$ into two cells, 
then one of the cells contains $\fu(\vec{b})$ for 
some pairwise disjoint $m$-sequence 
$\vec{b}=\langle b_i\big|i<m\rangle$ of nonempty subsets of $M$ (this 
result is sometimes referred to as the \emph{Folkman-Rado-Saunders} theorem).
An elegant proof of this theorem from the infinitary version, using 
a so-called compactness argument, 
can be obtained by following the proof of 
\cite[Th. 5.29]{hindmanstrauss} as a template, applied to 
the semigroup whose underlying set is $[\omega]^{<\omega}$ and whose 
semigroup operation is the union $\cup$.

Thus for $m<\omega$, let $M$ be given by this 
finitary theorem, and let $A$ be a 
$(\mathscr Y,M)$-adequate set. Let 
$\vec{a}=\langle a_j\big|j<M\rangle$ be 
a $(\mathscr Y,M)$-witness 
for the adequacy of $A$. If $A$ is partitioned into the two 
cells $A_0,A_1$, then since $\fs(a)\subseteq A$, 
we can induce a partition of 
$\mathfrak P(M)\setminus\{\varnothing\}$ into the two 
cells $B_0,B_1$ by declaring a subset $s\subseteq M$ 
to be an element of $B_l$ iff $\sum_{j\in s}a_j\in A_l$ 
for $l\in2$. Then the theorem of Graham and Rothschild gives 
us a pairwise disjoint family $\vec{b}=\langle b_j\big|j<m\rangle$ 
and an $l\in2$ such that $\fu(\vec{b})\subseteq B_l$. Letting 
$\vec{c}=\langle c_j\big|j<m\rangle$ be given by 
$c_j=\sum_{k\in b_j}a_k$, we get that $\fs(\vec{c})\subseteq A_l$ 
and Lemma~\ref{condensatestigos} ensures that $\vec{c}$ is 
a $(\mathscr Y,m)$-witness for adequacy. Therefore $A_l$ is 
$(\mathscr Y,m)$-adequate and we are done.
\end{proof}

\begin{cor}\label{partitadequateinfinitary}
For any family $\mathscr X$ consisting of ordered pairs of the 
form $(X,Y)$, with $X$ a linearly independent set and $Y$ suitable 
for $X$, if we partition an $\mathscr X$-adequate 
set into two cells, then one of them must be 
$\mathscr X$-adequate.
\end{cor}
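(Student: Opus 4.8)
The plan is to derive this infinitary statement from the finitary Lemma~\ref{partitadequatefinitary} by a routine compactness-style argument, exploiting the fact that the failure of $\mathscr X$-adequacy is always witnessed by a single finite subfamily $\mathscr Y\subseteq\mathscr X$ together with a single $m<\omega$. So I would suppose, towards a contradiction, that $A$ is $\mathscr X$-adequate, that $A=A_0\cup A_1$, and that neither $A_0$ nor $A_1$ is $\mathscr X$-adequate.

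By the definition of $\mathscr X$-adequacy, the failure of $A_0$ gives a finite $\mathscr Y_0\subseteq\mathscr X$ and an $m_0<\omega$ with $A_0$ not $(\mathscr Y_0,m_0)$-adequate, and likewise a finite $\mathscr Y_1\subseteq\mathscr X$ and $m_1<\omega$ with $A_1$ not $(\mathscr Y_1,m_1)$-adequate. Put $\mathscr Y=\mathscr Y_0\cup\mathscr Y_1$, a finite subfamily of $\mathscr X$, and $m=\max\{m_0,m_1\}$. At this point I would record two monotonicity remarks, both immediate from Definition~\ref{defadequacy}: (a) if a set is $(\mathscr Y,m)$-adequate then it is $(\mathscr Y',m)$-adequate for every $\mathscr Y'\subseteq\mathscr Y$, since a $(\mathscr Y,m)$-witness for adequacy restricts coordinate-wise to a $(\mathscr Y',m)$-witness; and (b) if a set is $(\mathscr Y,m)$-adequate then it is $(\mathscr Y,m')$-adequate for every $m'\leq m$, which is exactly the observation recorded right after Lemma~\ref{condensatestigos}. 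Combining (a) and (b), for each $j\in 2$ being $(\mathscr Y,m)$-adequate implies being $(\mathscr Y_j,m_j)$-adequate; hence neither $A_0$ nor $A_1$ is $(\mathscr Y,m)$-adequate.

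Now I apply Lemma~\ref{partitadequatefinitary} to this particular $m$ to obtain the corresponding $M<\omega$. Since $A$ is $\mathscr X$-adequate and $\mathscr Y$ is a finite subfamily of $\mathscr X$, $A$ is in particular $(\mathscr Y,M)$-adequate, so the partition $A=A_0\cup A_1$ falls under the hypotheses of Lemma~\ref{partitadequatefinitary}. That lemma then yields a cell $A_j$ which is $(\mathscr Y,m)$-adequate, contradicting the conclusion of the previous paragraph. This contradiction shows that one of $A_0,A_1$ was $\mathscr X$-adequate all along, which is what we wanted.

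As for the main obstacle: there is essentially none of substance here, since all the combinatorial content has already been packed into the finitary Lemma~\ref{partitadequatefinitary} (and, through it, into the Folkman--Rado--Sanders theorem). The only point demanding a little care is that the separate ``certificates of failure'' for $A_0$ and for $A_1$ must be amalgamated into a single finite $\mathscr Y$ and a single $m$ before the finitary lemma can be invoked; the two monotonicity remarks (a) and (b) are precisely what licenses that amalgamation, and once they are in hand the passage from the finite to the infinite version is the standard one-line compactness argument.
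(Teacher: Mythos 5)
Your proof is correct and follows essentially the same route as the paper's: amalgamate the two certificates of failure into a single finite $\mathscr Y=\mathscr Y_0\cup\mathscr Y_1$ and $m=\max\{m_0,m_1\}$, then invoke Lemma~\ref{partitadequatefinitary} with the corresponding $M$ to reach a contradiction. The only difference is that you spell out the two monotonicity remarks that the paper leaves implicit in the phrase ``in particular $A_i$ is $(\mathscr Y_i,m_i)$-adequate,'' which is a harmless (indeed helpful) elaboration.
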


\begin{proof}
If $A=A_0\cup A_1$ is a partition of the $\mathscr X$-adequate set $A$, 
and neither $A_0$ nor $A_1$ are $\mathscr X$-adequate, then the reason for this is 
the existence of finite 
$\mathscr Y_0,\mathscr Y_1\subseteq\mathscr X$ and 
$m_0,m_1<\omega$ such that $A_0$ is not 
$(\mathscr Y_0,m_0)$-adequate and $A_1$ is not 
$(\mathscr Y_1,m_1)$-adequate. Pick the $M$ that 
works for $\max\{m_0,m_1\}$ in 
Lemma~\ref{partitadequatefinitary}. Then for some 
$i\in2$, $A_i$ is 
$(\mathscr Y_0\cup\mathscr Y_1,\max\{m_0,m_1\})$-adequate 
(because $A$ is $(\mathscr Y_0\cup\mathscr Y_1,M)$-adequate), 
in particular $A_i$ is $(\mathscr Y_i,m_i)$-adequate, 
a contradiction.
\end{proof}

Recall that, in an abstract setting, if we have a 
set $X$ and a family $\mathscr A\subseteq\mathfrak P(X)$ 
then we say that $\mathscr A$ is \textbf{partition regular}, 
or a \textbf{coideal}, 
if $\mathscr A$ is closed under supersets and, whenever 
an element of $\mathscr A$ is partitioned into two cells, 
the family $\mathscr A$ necessarily contains at least one of 
the cells. Thus the previous corollary establishes that, 
for any family $\mathscr X$, the collection of 
$\mathscr X$-adequate subsets of $\mathbb B$ is partition 
regular. This is important because of the well-known 
fact that, if $\mathscr A$ is partition regular and 
$\mathcal F\subseteq\mathscr A$ is a filter on $X$, 
then it is possible to extend $\mathcal F$ 
to an ultrafilter $p\subseteq\mathscr A$.

With these preliminary results under our belt, we are finally ready to 
prove the main theorem of this section.

\begin{thrm}\label{crazyultrafilter}
 If $\covm=\mathfrak c$, then there exists a strongly summable 
 ultrafilter on $\mathbb B$ that is not additively isomorphic 
 to any union ultrafilter.
\end{thrm}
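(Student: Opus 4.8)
The plan is to build the desired ultrafilter by a transfinite recursion of length $\mathfrak c$, using the assumption $\covm=\mathfrak c$ to realize $\mathfrak c$-many requirements one at a time via a countable forcing (or equivalently a Martin's-Axiom-style) argument at each step. First I would set up the bookkeeping: enumerate as $\langle A_\alpha\mid\alpha<\mathfrak c\rangle$ all subsets of $\mathbb B$, and also enumerate (with cofinal repetition) all potential linearly independent sets $X$ so that we can diagonalize against the criterion from Lemma~\ref{caracaddisom}. Along the recursion I would maintain a filter base given by a decreasing sequence of sets $\fs(X_\alpha)$ (so that the limit ultrafilter is automatically strongly summable) together with a family $\mathscr X$ of pairs $(X,Y)$ with $Y$ suitable for $X$, keeping the invariant that every set so far committed to the ultrafilter is $\mathscr X$-adequate; Corollary~\ref{partitadequateinfinitary} guarantees this invariant can be preserved whenever we decide membership of some $A_\alpha$.

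Next, at a successor step $\alpha$ where we process $A_\alpha$: since the current approximation is $\mathscr X_\alpha$-adequate and that collection is partition regular, one of $A_\alpha$, $\mathbb B\setminus A_\alpha$ is $\mathscr X_\alpha$-adequate relative to the filter base built so far, so we can put the right one into the ultrafilter. To maintain strong summability we must simultaneously shrink to a new $\fs(X_{\alpha+1})$ contained in whichever cell we chose and still $\mathscr X_\alpha$-adequate; this is exactly where the countable-partial-order/$\covm$ hypothesis enters — the poset of finite approximations to $X_{\alpha+1}$, together with dense sets forcing $\fs(X_{\alpha+1})$ into the chosen set and forcing $X_{\alpha+1}$ to keep supports controlled, is countable, and there are fewer than $\mathfrak c$ of these dense sets to meet. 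The key new ingredient is the diagonalization step: for the $\alpha$-th candidate linearly independent set $X$ (with $\fs(X)$ already in the ultrafilter, which is the only interesting case), I would enlarge $\mathscr X$ by adding a pair $(X,Y)$ for some $Y$ suitable for $X$ (such $Y$ exists by the remark after Definition~\ref{defsuitability}), chosen so that $\fs(Y)$ is still in the filter — again a countable-forcing construction building $Y$ piece by piece inside the current $\fs$-set. By Lemma~\ref{matedisjuntez}, from that point on every $\fs(Z)$ in the ultrafilter with $Z\subseteq\fs(Y)$ fails to be $(X,Y)$-adequate, hence (by the invariant) cannot be put into the ultrafilter, so no $Z$ with pairwise disjoint $X$-supports can generate a set in $p$; by Lemma~\ref{caracaddisom} this rules out $p$ being additively isomorphic to a union ultrafilter witnessed by $X$, and since every linearly independent $X$ with $\fs(X)\in p$ gets handled, $p$ is additively isomorphic to no union ultrafilter at all.

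The main obstacle I anticipate is the simultaneous juggling at each step: we need the new generator set $\fs(X_{\alpha+1})$ (or the new suitable witness $Y$) to lie inside a set that is merely $\mathscr X_\alpha$-adequate — not inside an already-chosen $\fs$-set verbatim — so one must check that $\mathscr X$-adequate sets always contain $\fs$-subsets, and more delicately that one can choose such an $\fs$-subset while also respecting the suitability/adequacy constraints already imposed by the finitely many pairs in $\mathscr X_\alpha$. Concretely, the dense sets of the relevant countable poset must be shown to be genuinely dense, which amounts to a combinatorial argument that any finite adequate configuration can be extended; this is where the bulk of the real work lies, and it is plausibly handled by a Hindman-theorem-style argument applied inside $\fs(Y_i)$ for each $(X_i,Y_i)\in\mathscr X_\alpha$, exploiting that $\fs(Y_i)$ is $(X_i,Y_i)$-adequate and that adequacy passes to the cells of finite partitions via Lemma~\ref{partitadequatefinitary}. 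A secondary point requiring care is the limit stages: one must verify the filter base stays a filter base and the $\mathscr X$-adequacy invariant survives unions of increasing chains, which follows because adequacy of a set depends only on finitely much of $\mathscr X$ at a time.
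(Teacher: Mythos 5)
Your overall architecture matches the paper's: the same bookkeeping (enumerations with cofinal repetition), the same invariants (a filter base of $\fs$-sets, a growing family $\mathscr X$ of pairs $(X,Y)$ with $Y$ suitable for $X$, all committed sets $\mathscr X$-adequate), and the same endgame via Lemmas~\ref{caracaddisom} and~\ref{matedisjuntez} together with the partition regularity of adequacy. But there is a genuine gap exactly where you flag ``the bulk of the real work'': you never supply the mechanism that makes the dense sets dense, and the appeal to ``a Hindman-theorem-style argument'' does not close it. Two concrete problems. First, your intermediate claim that $\mathscr X$-adequate sets contain $\fs$-subsets is unjustified: adequacy only provides, for each finite $m$, an $m$-sequence $\vec a$ with $\fs(\vec a)$ inside the set, not an infinite FS-set, so you cannot simply ``shrink to a new $\fs(X_{\alpha+1})$'' inside an adequate cell. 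Second, and more seriously, the new generator $Y$ must simultaneously (a) have $\fs(Y)$ inside the chosen cell and compatible with all previous $\fs(Y_\xi)$, (b) be suitable for the new $X$ --- which forces its elements' $X$-supports to intersect pairwise in the controlled way of condition~(\ref{elcachito}) of Definition~\ref{defsuitability} --- and (c) contain, for every finite $\mathscr Y\subseteq\mathscr X$ and every $m$, sequences witnessing $(\mathscr Y,m)$-adequacy. Witnesses for adequacy come with no control whatsoever on their $X$-supports, so making them also witness suitability for $X$ requires gluing in fresh, pairwise disjoint pieces of $X$-support without destroying the $Y_i$-support conditions; nothing in your outline produces these.

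The paper resolves both problems with an algebraic ingredient absent from your proposal: it forms the set $H$ of ultrafilters extending the current filter all of whose members are $\mathscr X$-adequate, proves that $H$ is a closed subsemigroup of $(\beta\mathbb B,\blacktriangle)$, and applies the Ellis--Numakura lemma to extract an idempotent $q\in H$. The idempotent is what powers the density argument for the countable poset: members of $q$ contain infinite FS-sets, the star sets $A^\star=\{x\in A\mid x\blacktriangle A\in q\}$ keep translated intersections such as $\inter_{a\in\fs(\vec a)}a\blacktriangle A^\star$ inside $q$, and a pigeonhole on the finitely many possible support vectors inside such an FS-set produces elements whose $X$- and $Y_i$-supports avoid everything chosen so far. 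These fresh elements are then distributed among the adequacy witnesses (one new summand shared by exactly the $j$-th and $k$-th terms, for each pair $\{j,k\}$) to manufacture the pairwise $X$-support intersections that suitability demands while leaving the $Y_i$-support pattern, hence adequacy, intact. Without the idempotent --- or some substitute for it --- the dense sets of your poset are not obviously dense, so the hypothesis $\covm=\mathfrak c$ cannot yet be brought to bear.
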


\begin{proof}
 Let $\{A_\alpha\big|\alpha<\mathfrak c\}$ be an enumeration of 
 all subsets of $\mathbb B$, and let 
 $\langle X_\alpha\big|\alpha<\mathfrak c\rangle$ be an 
 enumeration of all infinite linearly independent subsets of $\mathbb B$ 
 in such a way that each such set appears 
 cofinally often in the enumeration. Now 
 recursively define linearly independent sets 
 $\langle Y_\alpha\big|\alpha<\mathfrak c\rangle$ and a 
 strictly increasing sequence of ordinals 
 $\langle \gamma_\alpha\big|\alpha<\mathfrak c\rangle$ 
 satisfying the following conditions for each $\alpha<\mathfrak c$:

 \begin{enumerate}[(i)]
  \item\label{whatisgama} $\gamma_\alpha$ is the least 
  $\eta\geq\sup_{\xi<\alpha}(\gamma_\xi+1)$ such that 
  $\fs(Y_\xi)\subseteq\fs(X_\eta)$ for some 
  $\xi<\alpha$.
  \item\label{suitability} $Y_\alpha$ is suitable for $X_{\gamma_\alpha}$.
  \item\label{ultra} $\fs(Y_\alpha)$ is either contained in or disjoint 
  from $A_\alpha$.
  \item\label{filterness} The family 
  $\mathcal F_\alpha=\{\fs(Y_\xi)\big|\xi\leq\alpha\}$ is centred.
  \item\label{adequacy} Letting $\mathscr X_\alpha=\{(X_{\gamma_\xi},Y_\xi)\big|\xi\leq\alpha\}$, 
  the filter generated by $\mathcal F_\alpha$ consists of $\mathscr X_\alpha$-adequate sets.
 \end{enumerate}

 Thus at each stage $\alpha$, we first use clause~(\ref{whatisgama}) to 
 determine what $\gamma_\alpha$ will be, and then we work to find a 
 $Y_\alpha$ satisfying (\ref{suitability})--(\ref{adequacy}).

 Let us first look at what we have at the end of this construction. 
 Clause~(\ref{filterness}) tells us that the family  
 $\{\fs(Y_\alpha)\big|\alpha<\mathfrak c\}$ generates a filter $p$, 
 which will be an ultrafilter because of (\ref{ultra}), and it will 
 obviously be nonprincipal and strongly summable. Now notice 
 that (\ref{adequacy}) implies that, if 
 $\mathscr X_{\mathfrak c}=\{(X_{\gamma_\alpha},Y_\alpha)\big|\alpha<\mathfrak c\}$, 
 then each $A\in p$ will be $\mathscr X_{\mathfrak c}$-adequate, because if 
 $\mathscr Y=\{(X_{\gamma_{\alpha_i}},Y_i)\big|i<n\}$ is a finite 
 subfamily of $\mathscr X_{\mathfrak c}$, $m<\omega$, and $A\in p$, then we can 
 grab an $\alpha<\mathfrak c$ larger than all $\gamma_{\alpha_i}$ 
 and also larger than the $\beta$ witnessing 
 $\fs(Y_\beta)\subseteq A$. By (\ref{adequacy}), 
 $\fs(Y_\alpha)\cap\fs(Y_\beta)$ is $\mathscr X_\alpha$-adequate, in particular 
 it is $(\mathscr Y,m)$-adequate and thus so is $A$.

 The last observation is crucial for the argument that $p$ cannot 
 be additively isomorphic to any union ultrafilter. If it was, by 
 Lemma~\ref{caracaddisom} there would be a linearly 
 independent $X$ such that $\fs(X)\in p$ and such that for each 
 $A\in p$ satisfying $A\subseteq\fs(X)$, we would be able to 
 find a family $Z$ whose elements have pairwise disjoint 
 $X$-supports and such that $p\ni\fs(Z)\subseteq A$. Now since 
 $\fs(X)\in p$, there is an $\alpha<\mathfrak c$ such that 
 $\fs(Y_\alpha)\subseteq\fs(X)$, let $\eta$ be the least 
 ordinal $\geq\sup_{\xi\leq\alpha}(\gamma_\xi+1)$ such that $X=X_\eta$. By~(\ref{whatisgama}) we 
 will have that $\gamma_{\alpha+1}\leq\eta$ and, in fact, whenever $\xi>\alpha$ is
 such that no $\gamma_\beta$ equals $\eta$ for any 
 $\alpha<\beta<\xi$, then $\gamma_\xi\leq\eta$. Thus there will eventually be some 
 $\zeta>\alpha$ such that $\gamma_\zeta=\eta$, and 
 by~(\ref{suitability}) this 
 means that $Y_\zeta$ is suitable for $X$. Since every element 
 of $p$ is $\mathscr X_{\mathfrak c}$-adequate, in particular 
 $(X,Y_\zeta)$-adequate, then by Lemma~\ref{matedisjuntez} we get 
 that for no set $Z$ with pairwise disjoint $X$-supports can 
 we have that $p\ni\fs(Z)\subseteq\fs(Y_\zeta)$. This shows 
 that $p$ cannot be additively isomorphic to any union ultrafilter, 
 and we are done.

 We now proceed to show how is it possible to carry out such 
 a construction. So let $\alpha<\mathfrak c$ and assume 
 that for all $\xi<\alpha$, conditions 
 (\ref{whatisgama})--(\ref{adequacy}) are satisfied. As 
 mentioned before, condition~(\ref{whatisgama}) 
 uniquely determines $\gamma_\alpha$, so we only need 
 to focus on constructing $Y_\alpha$ satisfying 
 conditions~(\ref{suitability})--(\ref{adequacy}).
 Let $\mathcal F=\{\fs(Y_\xi)\big|\xi<\alpha\}$, 
 and $\mathscr X=\{(X_{\gamma_\xi},Y_\xi)\big|\xi<\alpha\}$.
 Condition~(\ref{adequacy}) implies that the filter 
 generated by $\mathcal F$ consists of $\mathscr X$-adequate 
 sets, if $\alpha$ is limit, by the same argument 
 as in the proof that $p$ consists of  
 $\mathscr X_{\mathfrak c}$-adequate sets, and if $\alpha=\xi+1$ 
 just because $\mathcal F=\mathcal F_\xi$ and 
 $\mathscr X=\mathscr X_\xi$. Thus if we define 
 \begin{equation*}
  H=\left\{q\in\beta\mathbb B\bigg|(q\supseteq\mathcal F)\wedge(\forall A\in q)(A\mathrm{\ is\ }\mathscr X\mathrm{-adequate})\right\},
 \end{equation*}
 then $H$ will be a nonempty subset of $\beta\mathbb B$ by 
 Corollary~\ref{partitadequateinfinitary} (cf. the discussion 
 following that Corollary). Since finite sets cannot be $\mathscr{X}$-adequate, 
 we have that, in fact, $H\subseteq B^*$. In what follows, in order to 
 avoid confusion, we will use the symbol $\blacktriangle$ to 
 denote the extension of the group operation $\bigtriangleup$ 
 on $\mathbb B$ to all of $\beta\mathbb B$. We will also 
 use that symbol to denote translates of sets, 
 $x\blacktriangle A=\{x\bigtriangleup y\big|y\in A\}$. Thus, 
 with this notation, 
 \begin{equation*}
  p\blacktriangle q=\{A\subseteq\mathbb B\big|\{x\in\mathbb B\big|x\blacktriangle A\in q\}\in p\}.
 \end{equation*}

 \begin{claim}\label{subsemigroup}
 H is a closed subsemigroup of $\mathbb B$.
 \end{claim}

 \begin{proof}[Proof of Claim]
  The fact that $H$ is closed is fairly straightforward and 
  is left to the 
  reader. To prove that $H$ is a subsemigroup, let $p,q\in H$. 
  We first show that $\mathcal F\subseteq p\blacktriangle q$. 
  Fix a $\xi<\alpha$, and note that we have, for each $w\in\fs(Y_\xi)$, 
  that $w\blacktriangle\fs(Y_\xi)=\fs(Y_\xi)\cup\{\varnothing\}\in q$. 
  Hence $p\ni\fs(Y_\xi)\subseteq\{x\in\mathbb B\big|x\blacktriangle\fs(Y_\xi)\in q\}$, 
  which means that $\fs(Y_\xi)\in p\blacktriangle q$.
  
  Now we 
  only need to show that, if $A\in p\blacktriangle q$, then 
  $A$ is $\mathscr X$-adequate. So fix a finite  
  $\mathscr Y=\{(X_i,Y_i)\big|i<n\}\subseteq\mathscr X$ and an 
  $m<\omega$. We will 
  see that there is a $(\mathscr Y,m)$-witness for the
  adequacy of $A$. Let 
  $B=\{x\in\mathbb B\big|x\blacktriangle A\in q\}$. 
  We have that $B\in p$ because $A\in p\blacktriangle q$, so $B$ is 
  $\mathscr X$-adequate and thus we can grab a $(\mathscr Y,m)$-witness 
  $\langle a_j\big|j<m\rangle$ for the adequacy of $B$. For each $i<n$,
  $\fs(\vec{a})\subseteq\fs(Y_i)$ so we can define 
  $Z_i\in[Y_i]^{<\omega}$ by $Z_i=Y_i-\supp(\vec{a})$. 
  Consider the set 
  \begin{equation*}
   C=\inter_{a\in\fs(\vec{a})}a\blacktriangle A,
  \end{equation*}
  which is an element of $q$ because 
  $\fs(\vec{a})\subseteq B$ and hence it is $\mathscr X$-adequate.
  Therefore we can grab a 
  $(\mathscr Y,2^{\sum_{i<n}|Z_i|}+2m-1)$-witness for the 
  adequacy of $C$, 
  $\langle b_j\big|j<2^{\sum_{i<n}|Z_i|}+2m-1\rangle$. 
  Associate to any element $x\in\inter_{i<n}\fs(Y_i)$ the 
  vector $\langle Z_i\cap Y_i-\supp(x)\big|i<n\rangle$, and 
  notice that there are exactly $2^{\sum_{i<n}|Z_i|}$ many possible 
  distinct such vectors. Thus there exist $2m$ distinct numbers 
  $k_0,\ldots,k_{2m-1}<2^{\sum_{i<n}|Z_i|}+2m-1$ such that for 
  each $j<m$, the vector 
  associated to $b_{k_{2j}}$ is exactly the same as the one  
  associated to $b_{k_{2j+1}}$, and so if we let 
  $c_j=b_{k_{2j}}\bigtriangleup b_{k_{2j+1}}$, then for 
  each $i<n$, $c_j\in\fs(Y_i\setminus Z_i)$. By 
  Lemma~\ref{condensatestigos}, the 
  $m$-sequence $\vec{c}=\langle c_j\big|j<m\rangle$ will be 
  an $m$-witness for the adequacy of $C$. Now let 
  $\vec{d}=\langle d_j\big|j<m\rangle$ 
  be given by $d_j=a_j\bigtriangleup c_j$. We claim that $\vec{d}$ is a 
  $(\mathscr Y,m)$-witness for the adequacy of $A$, so let us fix $i<n$ 
  and let us 
  verify that $\vec{d}$ satisfies conditions~(\ref{adequafsin}) 
  and~(\ref{adequawitness}) from Definition~\ref{defadequacy}. It is 
  certainly the case that 
  $\fs(\vec{c})\subseteq A\cap\fs(Y_i)$, because if $d\in\fs(\vec{d})$ 
  then there are $a\in\fs(\vec{a})$ and $c\in\fs(\vec{c})$ such that 
  $d=a\bigtriangleup c$, and since $c\in C\subseteq a\blacktriangle A$, 
  we get that $d\in A$. Thus requirement~(\ref{adequafsin}) is 
  satisfied. Now for 
  requirement~(\ref{adequawitness}), just grab the $m$-witness for the 
  suitability of $Y_i$ that works for $\vec{a}$, 
  $\langle y_j\big|j<m\rangle$. We constructed the $c_j$ in such a way that 
  $Y_i-\supp(c_j)\cap Z_i=\varnothing$, while 
  $Y_i-\supp(a_j)\subseteq Z_i$. 
  Hence for each $j<m$, 
  $Y_i-\supp(d_j)\cap Z_i=Y_i-\supp(a_j)$ and so whenever 
  $j<m$, $y_j\in Y_i-\supp(d_j)$, and $y_j\notin Y_i-\supp(d_k)$ for 
  $k\neq j$.
 \end{proof}

 Since $H$ is a closed subset of the compact space $\beta\mathbb B$, 
 then $H$ is compact as well, and since it is a semigroup in its 
 own right, we can apply the so-called Ellis-Numakura lemma 
 \cite[Th. 2.5]{hindmanstrauss} which 
 asserts that every (nonempty) compact right-topological semigroup 
 contains idempotent elements. Hence we can pick an idempotent 
 $q\blacktriangle q=q\in H$. Let 
 $A\in\{A_\alpha,\mathbb B\setminus A_\alpha\}$ be such that 
 $A\in q$. We will use $q$ to carefully construct $Y_\alpha$. 
 Let $X=X_{\gamma_\alpha}$.
 
 \begin{claim}
 There is a $Y$, suitable for $X$, such that:
 \begin{enumerate}[(i)]
  \item $\fs(Y)\subseteq A$, and 
  \item\label{secondbullet} For any finite subfamily 
  $\mathscr Y=\{(X_i,Y_i)\big|i<n\}\subseteq\mathscr X$, for 
  any $m<\omega$ and for any finitely many $\xi_0,\ldots,\xi_k<\alpha$, 
  there is a sequence $\langle a_j\big|j<m\rangle$ of elements 
  of $Y$ that is simultaneously 
  an $m$-witness for the suitability (for $X$) of $Y$ and a 
  $(\mathscr Y,m)$-witness for the adequacy of 
  $\inter_{l\leq k}\fs(Y_{\xi_l})$. In particular, $\vec{a}$ witnesses the 
  $(\mathscr Y\cup\{(X,Y)\},m)$-adequacy of 
  $\left(\inter_{l\leq k}\fs(Y_{\xi_l})\right)\cap\fs(Y)$.
 \end{enumerate}
 \end{claim}
 
 \begin{proof}
  This is the only place where we will actually use the hypothesis 
  that\linebreak $\covm=\mathfrak c$. Since $q$ is an idempotent and 
  $A\in q$, the set 
  $A^\star=\{x\in A\big|x\blacktriangle A\in q\}\in q$ and by 
  \cite[Lemma 4.14]{hindmanstrauss}, for every $x\in A^\star$, 
  $x\blacktriangle A^\star\in q$.
  Let $\mathbb P$ be the partial order 
  consisting of those finite subsets $W\subseteq\fs(X)$ such that 
  $\fs(W)\subseteq A^\star$ and satisfying  
  condition~(\ref{elcachito}) from the Definition~\ref{defsuitability} of 
  suitability for $X$, ordered by reverse inclusion (thus $Z\leq W$ means 
  that $Z\supseteq W$). This is a countable forcing notion, hence forcing 
  equivalent to Cohen's forcing. For any finite $\mathscr Y\subseteq\mathscr X$, 
  every $m<\omega$, and all 
  $\xi_0,\ldots,\xi_k<\alpha$ as in part~(\ref{secondbullet}) of the conclusions 
  of this claim, we let 
  $D(\mathscr Y,m,\xi_0,\ldots,\xi_k)$ be the set 
  consisting of all conditions $Z\in\mathbb P$ such that there is 
  an $m$-sequence $\vec{a}$ of elements of $Z$ that simultaneously 
  witnesses the suitability of $Z$ for $X$ and the $(\mathscr Y,m)$-adequacy of 
  $\inter_{l\leq k}\fs(Y_{\xi_l})$. The heart of this proof 
  will be the argument that all these sets 
  $D(\mathscr Y,m,\xi_0,\ldots,\xi_k)$ are dense in $\mathbb P$. 
  Once we have that, we just need to notice that there are $|\alpha|<\mathfrak c=\covm$ 
  many such dense sets, so we 
  can pick a filter $G$ intersecting them all, and we will clearly be 
  done by defining $Y=\union G$.
  
  So let us prove that $D(\mathscr Y,m,\xi_0,\ldots,\xi_k)$ is 
  dense in $\mathbb P$. The idea 
  is that we are given a condition $Z\in\mathbb P$, and we would like to pick a 
  $(\mathscr Y,m)$-witness $\vec{a}$ for the adequacy of 
  $\inter_{l\leq k}\fs(Y_{\xi_l})$, and extend $Z$ to a stronger 
  condition $W$ by adding the range of $\vec{a}$ to it. The main difficulty is 
  that we want $\vec{a}$ to be at the same time an $m$-witness for suitability 
  (for $X$) such that the resulting condition $W=Z\cup\{a_j\big|j<m\}$ still satisfies 
  condition~(\ref{elcachito}) of Definition~\ref{defsuitability}.
  
  Let us start with a condition $Z\in\mathbb P$, and let 
  $X'=X\setminus X-\supp(Z)$. Notice first that we 
  must have $\fs(X')\in q$, for otherwise we would have 
  $\{w\in\fs(X)\big|X-\supp(w)\cap X-\supp(Z)\neq\varnothing\}\in q$, but it is 
  easy to see (arguing as in \cite[Lemma 2.2 and Cor. 2.3]{yonilaboriel}) that this set cannot 
  contain any $\fs$-set, which it should 
  if it was to belong to any idempotent (because of \cite[Th. 5.8]{hindmanstrauss}). Let 
  \begin{equation*}
   B=\left(\inter_{l\leq k}\fs(Y_{\xi_l})\right)\cap\fs(X')\cap\left(\inter_{z\in\fs(Z)}z\blacktriangle A^\star\right).
  \end{equation*}
  Then $B^\star=\{x\in B\big|x\blacktriangle B\in q\}\in q$, thus $B^\star$ is 
  $\mathscr X$-adequate, so there is a 
  $(\mathscr Y,m)$-witness $\vec{a}=\langle a_j\big|j<m\rangle$ for the 
  adequacy of $B^\star$. We will now recursively construct an 
  $m+\binom{m}{2}$-sequence of elements 
  $\vec{x}=\langle x_k\big|k<m+\binom{m}{2}\rangle$ such that 
  $\fs(\vec{x})\subseteq\inter_{a\in\fs(\vec{a})}a\blacktriangle B^\star$ and such 
  that the $X$-supports of its elements are pairwise disjoint and also 
  disjoint from 
  $X-\supp(\vec{a})$, and 
  whose $Y_i$-supports are disjoint from $Y_i-\supp(\vec{a})$ for each 
  $i<n$. If we succeed in this construction, picking a bijection 
  $f:[m]^2\longrightarrow(m+\binom{m}{2})\setminus m$ will enable us to define 
  the sequence $\vec{b}=\langle b_j\big|j<m\rangle$ by:
  \begin{equation*}
   b_j=a_j\bigtriangleup x_j\bigtriangleup\left(\sum_{\substack{k<m \\ k\neq j}}x_{f(\{j,k\})}\right).
  \end{equation*}
  Since the $Y_i$-supports of all the $x_k$ are disjoint from 
  $Y_i-\supp(\vec{a})$, then arguing as in the proof of 
  Claim~\ref{subsemigroup} we conclude that $\vec{b}$ is a 
  $(\mathscr Y,m)$-witness for the adequacy of $B^\star$, hence also 
  for the adequacy of $\inter_{l\leq k}\fs(Y_{\xi_l})$. And the careful 
  choice of the $X$-supports of the $x_k$ ensures that $\vec{b}$ is at the 
  same time an $m$-witness 
  for suitability for $X$, hence letting 
  $W=Z\cup\{b_j\big|j<m\}$ yields a condition in $\mathbb P$ (i.e. 
  $W$ satisfies condition~(\ref{elcachito}) of 
  Definition~\ref{defsuitability}).
  
  Thus, the only remaining issue is that of picking the $x_k$. Assume 
  that we have picked $x_l$ for $l<k$, and we will show how to pick 
  $x_k$. Since $q$ is an idempotent and 
  \begin{equation*}
   C=\inter_{a\in\fs(\vec{a}\frown\langle x_l\big|l<k\rangle)}a\blacktriangle B^\star\in q,
  \end{equation*}
  then there is a set of the form $\fs(V)\subseteq C$ (as before, this follows from 
  \cite[Th. 5.8]{hindmanstrauss}). As in the argument for the proof of 
  Claim~\ref{subsemigroup}, to each element $x\in C$ we associate the 
  vector 
  \begin{multline*}
   \langle Y_i-\supp(\vec{a})\cap Y_i-\supp(x)\big|i<n\rangle\frown \\
   \langle X-\supp(\{a_j\big|j<m\}\cup\{x_l\big|l<k\})\cap X-\supp(x)\rangle,
  \end{multline*}
   and notice that, since there are only finitely many possible distinct such 
  vectors, the infinite set $V$ must contain at least one pair of 
  distinct elements $v,w$ that have the same associated vector. 
  Hence by letting $x_k=v\bigtriangleup w\in\fs(V)\subseteq C$, we get that 
  $Y_i-\supp(x_k)\cap Y_i-\supp(\vec{a})=\varnothing$ for all $i<n$, and 
  $X-\supp(x_k)\cap X-\supp(\{a_j\big|j<m\}\cup\{x_l\big|l<k\})=\varnothing$, so the 
  construction can go on and we are done.
 \end{proof}

 Let $Y_\alpha=Y$. Obviously requirement~(\ref{suitability}) is 
 satisfied, and since $\fs(Y_\alpha)\subseteq A\in\{A_\alpha,\mathbb B\setminus A_\alpha\}$, 
 requirement~(\ref{ultra}) is 
 satisfied as well. It is easy to see that condition~(\ref{secondbullet}) from the 
 conclusion of the claim ensures at once that requirements~(\ref{filterness}) 
 and~(\ref{adequacy}) are fulfilled, and we are done.
\end{proof}

\proof[Acknowledgements]

The results from this paper constitute a portion of the author's PhD Dissertation. 
The author is grateful to his supervisor Juris Stepr\=ans for his encouragement and useful 
suggestions, to the Consejo Nacional 
de Ciencia y Tecnolog\'{\i}a (Conacyt), Mexico, for their financial support via 
scholarship number 213921/309058, as well as to the anonymous referee for her thorough 
reading of the manuscript and for simplifying a couple of proofs.

\end{document}